\documentclass{m2an}

\newcommand{\C}{\mathbb C}
\newcommand{\Z}{\mathbb Z}

\newcommand{\A}{\mathbb{A}}

\newcommand{\Hom}{\operatorname{Hom}}

\newcommand{\tr}{\operatorname{tr}}

\newcommand{\Map}{\operatorname{Map}}
\newcommand{\Ext}{\operatorname{Ext}}

\newcommand{\trdeg}{\operatorname{trdeg}}
\newcommand{\Sing}{\operatorname{Sing}}

\usepackage[colorlinks,linkcolor=blue,anchorcolor=blue,citecolor=red]{hyperref}
\newtheorem{theorem}{Theorem}[section]
\newtheorem{example}[theorem]{Example}
\newtheorem{corollary}[theorem]{Corollary}
\newtheorem{lemma}[theorem]{Lemma}
\newtheorem{remark}[theorem]{Remark}
\newtheorem{definition}[theorem]{Definition}
\newtheorem{proposition}[theorem]{Proposition}

\usepackage{bm}
\usepackage{float}
\usepackage{multirow}
\usepackage{upgreek}
\usepackage{amsfonts}
\usepackage{amssymb}
\usepackage{graphicx}
\usepackage{mathrsfs}
\usepackage{tabularx}
\usepackage{array}
\usepackage{cite}
\usepackage{amsthm}
\usepackage{amsmath}
\usepackage{algorithm}
\usepackage{algpseudocode}
\usepackage{float}

\begin{document}
%%-----------------------------
%%      the top matter
%%-----------------------------
\title{On Intersection of Character Varieties Associated to Dehn Fillings of the Whitehead Link and its Generalization}
\author{Jiayu Wan}
\address{Institute of Mathematical Sciences,ShanghaiTech University, Shanghai 201210, China. \\ Email:wanjy@shanghaitech.edu.cn }
\author{Charles Katerba}
\address{Montana State University \\ Email:charles.katerba@gmail.com}

\begin{abstract}
The canonical component of the character variety of a hyperbolic 2-bridge link complement $M$ is an algebraic surface, from the work of Thurston. If we fix a boundary component $T$ of $M$ and perform a Dehn-filling on $T$, we get a 1-cusped manifold whose character variety is commonly a generalized curve that is naturally embedded in the character variety of $M$. If we perform distinct Dehn-fillings on $M$, we get various generalized curves on the canonical surface. In this paper, we analyze how these curves intersect with each other in the surface and study the topological implications for these intersection points. We also classify the intersection points that correspond to reducible characters, using the twisted Alexander polynomials associated to the Dehn-fillings. Throughout the paper, we use the Whitehead link as a motivating example to illustrate our core methodology, while some generalizations to other 2-bridge links will also be discussed.
\end{abstract}

\subjclass{57K32, 14M35, 16H05, 20F67, 14Q10}
\keywords{hyperbolic 3-manifolds \and character varieties \and algebraic geometry}

\date{}

\maketitle

%-------------------------------------------------
\section{Introduction}
Let $\Gamma$ be a finitely generated group and let $X(\Gamma)$ be the $\operatorname{SL}_2(\mathbb{C})$ character variety of $\Gamma$ (see \S 2.1). When $\Gamma$ is the fundamental group of a hyperbolic 3-manifold $M$, seminal work of Thurston and Culler-Shalen(\cite{Culler_Shalen}) established $X(\Gamma)$ as a powerful tool in the study of the geometry and topology of $M$. For a hyperbolic 3-manifold $M$, the dimension of a canonical component(cf. \S 2.2) of the character variety is equal to its number of cusps. In particular, if $M$ is a hyperbolic link complement $\mathbb{S}^3$ whose boundary consists of two tori, a canonical component of $X(\Gamma)$ will be an algebraic surface. This surface contains a wealth of information about $M$ and is also valuable as a subject of study in its own right(\cite{MPV2011,Landes,Harada,PT2015}).

Let $M$ be a 2-bridge link complement in $\mathbb{S}^3$ and fix one of the boundary components $T$. We can perform a Dehn filling of slope $\gamma=\frac{p}{q}$ on $T$ to obtain a one-cusped 3-manifold $M_{\gamma}$. This Dehn filling will induce an algebraic subvariety $C_{\gamma}$ of the canonical surface(see \S 2.3). In most situations(e.g. when the Dehn filling is also hyperbolic), $C_\gamma$ will be an algebraic curve(not necessarily irreducible) with possibly isolated 0-dimensional components. Also, since both of the boundary components for a 2-bridge link $M$ are unknots, the isomorphism class of $C_\gamma$ will be independent of the boundary torus $T$ we pick. The main focus of the paper is to study how $C_\gamma$ intersects with $C_{\delta}$, when two distinct Dehn filling slopes $\gamma$ and $\delta$ are considered. The significance of studying such intersection pattern is at least twofold. First, the intersection points may carry important topological information of the corresponding 3-manifolds(cf. \S 4).  Second, it is a standard paradigm in algebraic geometry to study the intersection of two curves on an algebraic surface and read off structural information about the surface itself from that intersection. Therefore, it is reasonable to expect that we can extract useful information about the canonical surface from the intersection $C_\gamma \cap C_\delta$. This paper aims to lay the groundwork for future research in this direction and to clarify some potentially useful concepts and theories.

We begin with a systematic analysis of the intersection pattern $C_\gamma \cap C_\delta$ for two distinct slopes $\gamma$ and $\delta$. This includes finding the common 1-dimensional components, locating the common isolated points, and computing the intersection number and multiplicity for the pure 1-dimensional components of $C_\gamma$ and $C_\delta$. In this paper, we use the Whitehead link complement in $\mathbb{S}^3$ as a motivating example, and the generalization to other 2-bridge links will also be discussed. The main contribution of the paper is that we come up with an algorithm in the case of the Whitehead link(Theorem \ref{Algorithm for computing intersection pattern}) that completely classifies the intersection pattern, once two slopes $\gamma$ and $\delta$ are given as inputs. The algorithm is easy to implement, and provides a way to decouple the complicated problem into simpler and independent pieces. The algorithm can also be generalized and applied to other 2-bridge links(Theorem \ref{Algorithm for computing intersection pattern in general}), provided that some preliminary information of the associated character variety has been computed \textit{a priori}. The key idea underlying our approach is that we consider the boundary restriction map $\phi$ from the canonical component of the given character variety $X_0(M)$ onto the character variety of the corresponding boundary torus $X(T)$. We push forward $C_\gamma$ and $C_\delta$ downstairs to $X(T)$, find the intersection pattern there(which is much easier to compute), and then pull back the intersection pattern upstairs using the properties of $\phi$. We also provide a wide range of examples for different choices of slopes that exhibit completely different intersecting behavior. 

Next, analyze the topological implications for an intersection point in $p \in C_\gamma \cap C_\delta$. In fact, when one of the coordinates of $p$ lies in some number field $K$ but is not an algebraic integer(such a character $p$ is called the character of an ANI representation), the point detects a common boundary slope for both the Dehn filled manifold $M_\gamma$ and $M_\delta$. We explain how this works in terms of Culler-Shalen theory. In the Whitehead link case, we provide a simple criterion to decide whether an intersection point has an ANI coordinate(Proposition \ref{criterion for ANI}), and we also compute the common boundary slopes that could possibly arise in this way using purely arithmetic arguments(Theorem \ref{boundary slopes are 0 or 4}). This last observation is in accordance with the common boundary slopes computed from the topological method of Hatcher and Thurston(\cite{HT1985}). 

Finally, since reducible and irreducible characters typically exhibit very different behaviors, it is natural to ask when an intersection point corresponds to a reducible character. We give a characterization of a reducible intersection point in terms of common roots of twisted Alexander polynomials. As a corollary of the characterization, we show that the canonical curves associated with different twist knots arising from Dehn surgeries of the Whitehead link can never intersect at a reducible character. 

The rest of the paper is organized as follows. In section 2, we begin with a short introduction to character varieties of cusped hyperbolic 3-manifolds and the associated Dehn fillings. In section 3, we study the intersection pattern associated to Dehn filled manifolds in the Whitehead link complement and discuss its generalization to other 2-bridge links. In section 4, we provide a topological implication of the intersection points and analyze how they can be used to detect common boundary slopes. Finally in section 5, we give a characterization of reducible intersection point in terms of twisted Alexander polynomials. 

\bigskip

\noindent \textbf{Role of AI.} During the preparation of this manuscript, the author used OpenAI's ChatGPT (GPT-5.6) as an assistive tool for language editing, organization and presentation, literature searches, translation, and related aspects of manuscript preparation. In particular, Lemmas \ref{trace functions of n and l} and \ref{finite locus} were computed with the assistance of AI. The author reviewed all resulting material and takes full responsibility for the mathematical content and the final manuscript. 
\section{Character varieties and cusped hyperbolic 3-manifolds}

\subsection{Character Varieties}
Character varieties have proven to be a powerful tool in the study of hyperbolic 3-manifolds, bridging the algebraic properties of the fundamental groups and the topology of the 3-manifolds. For important works in this direction, we refer the reader to \cite{Culler_Shalen, Shalen, CGLS1987, CCGLS1994}. In this subsection, we present a short introduction to character varieties. Let $\Gamma$ be a finitely generated group, and let $R(\Gamma)$ be the set of all representations of $\Gamma$ in $SL_2(\mathbb{C})$, i.e. $R(\Gamma)=\Hom(\Gamma, \operatorname{SL}_2(\mathbb{C}))$. As discussed in \cite{Culler_Shalen}, $R(\Gamma)$ is naturally endowed with the structure of an affine algebraic closed subset of $\mathbb{C}^{4n}$, and we call $R(\Gamma)$ (with this algebraic structure) the representation variety of $\Gamma$. Two elements $\rho_1, \rho_2 \in R(\Gamma)$ are called equivalent if there exists $g \in GL_2(\mathbb{C})$ such that $\rho_2 = g\rho_{1}g^{-1}$. A representation $\rho \in R(\Gamma)$ is called irreducible if there is no nontrivial subspace of $\mathbb{C}^2$ invariant under the action of $\rho(\Gamma)$, and is called reducible if it is not irreducible.

For any $\rho \in R(\Gamma)$, the character of $\rho$ is the function $\chi_{\rho}: \Gamma \longrightarrow \mathbb{C}$ defined by $\chi_{\rho}(g) = tr(\rho(g))$. It is well-known that if $\rho_1,\rho_2 \in R(\Gamma)$ such that $\rho_1$ is irreducible and $\chi_{\rho_1} = \chi_{\rho_2}$, then $\rho_1,\rho_2$ are equivalent. Conversely, traces are always invariant under conjugacy (hence equivalence). So characters 'almost' classify representations up to equivalence. Let $X(\Gamma)$ be the set of all characters of $\Gamma$ in $\operatorname{SL}_2(\mathbb{C})$, then $X(\Gamma)$ is also endowed with an algebraic structure using the arguments in \cite{Culler_Shalen}. We therefore have the following definition. 

\begin{definition}
    Let $\Gamma$ be a finitely generated group and $X(\Gamma)$ the set of characters of $\Gamma$ in $\operatorname{SL}_2(\mathbb{C})$. Then $X(\Gamma)$, together with the endowed algebraic structure, is called the character variety of $\Gamma$ in $\operatorname{SL}_2(\mathbb{C})$.
\end{definition}

There is a natural surjective map $t: R(\Gamma) \longrightarrow X(\Gamma)$ that sends each $\rho \in R(\Gamma)$ to its character $\chi_{\rho}$. It can be shown that $t$ is a regular map, if $R(\Gamma)$ and $X(\Gamma)$ are equipped with the corresponding algebraic structure.

\subsection{Cusped Hyperbolic 3-Manifolds}
We will be particularly interested in the case where $\Gamma=\pi_{1}(M)$ for a $M$ a hyperbolic 3-manifold. A hyperbolic 3-manifold will is a connected, oriented and complete manifold $M$ of the form $\mathbb{H}^3/\Gamma$, where $\mathbb{H}^3$ is the hyperbolic 3-space and $\Gamma \cong \pi_1(M)$ is a torsion free discrete subgroup of $\operatorname{Isom}^{+}(\mathbb{H}^3) \cong \operatorname{PSL}_2(\mathbb{C})$. We refer the reader to \cite{Ratcliffe} and \cite{Thurston_2} for more detail on hyperbolic 3-manifolds. 

We will focus on the situation where $M$ is a knot or link exterior in $\mathbb{S}^3$. In this case, $M$ is of finite volume, and has a finite union of incompressible tori as its boundary. By Mostow Rigidity Theorem(see \cite{Ratcliffe}, \S 11.8), there is a unique (up to equivalence) discrete and faithful representation $\rho_0: \Gamma \longrightarrow \operatorname{PSL}_2(\mathbb{C})$, coming from the holonomy of the complete structure on $M$. This $\rho_0$ lifts to a representation $\hat{\rho_0}: \Gamma \longrightarrow \operatorname{SL}_2(\mathbb{C})$ according to the study of Thurston(\cite{Culler_Shalen}, 3.1.1.). In general, this lift won't be unique even up to equivalence. 

We define a canonical component $X_0(\Gamma) \subset X(\Gamma)$ to be an irreducible component of $X(\Gamma)$ containing the character of some $\hat{\rho_0}$ describe above, i.e. the character of a lift of the discrete faithful representation $\rho_0: \Gamma \longrightarrow \operatorname{PSL}_2(\mathbb{C})$. Notice that there may be multiple canonical components of $X(\Gamma)$ as the lift won't be unique. Any such canonical component is clearly a complex affine variety. Its dimension is given by the number of cusps of the manifold (in our case, it's the number of components of the link) as proved by Thurston in the following important theorem. 

\begin{theorem}\label{Thurston's theorem}
    (\cite{Culler_Shalen}, 3.2.1.) Let $M$ be a noncompact hyperbolic 3-manifold of finite volume with $d$ cusps and set $\Gamma=\pi_1(M)$. Then any canonical component $X_0(\Gamma)$ is a $d$-dimensional complex affine algebraic variety. In particular, when $M$ is a one (resp. two)-cusped hyperbolic 3-manifold, the canonical component $X_0(\Gamma)$ is a complex affine curve (resp. surface). 
\end{theorem}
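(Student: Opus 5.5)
The plan is the classical Thurston / Culler--Shalen argument: bound the dimension of $X_0(\Gamma)$ from below by a deformation count, and from above using the cusp trace functions and Mostow--Prasad rigidity. Fix the lift $\hat\rho_0$ and its character $\chi_0\in X_0(\Gamma)$. For each cusp $i=1,\dots,d$ choose a nontrivial peripheral element $\mu_i$; it is parabolic under $\hat\rho_0$, so $\operatorname{tr}\hat\rho_0(\mu_i)=\pm 2$. Consider the regular map
\[
F: X_0(\Gamma)\to \mathbb{C}^d,\qquad \chi\mapsto \bigl(\chi(\mu_1),\dots,\chi(\mu_d)\bigr).
\]

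\textbf{Step 1 (lower bound).} I would show $\dim_{\chi_0} X(\Gamma)\ge d$ by a deformation count. Take a spine of $M$: a 2-complex with one 0-cell and $g$ 1-cells, so $\Gamma$ has a presentation with $g$ generators. Since $\chi(M)=0$, there are at most $g-1$ effective relations. By the half-lives-half-dies principle on $\partial M$, one relation is redundant modulo the boundary tori, and this effectively removes one relation per cusp. Counting then gives
\[
\dim_{\hat\rho_0} R(\Gamma)\ \ge\ 3g-3(g-1)+d=d+3.
\]
Near $\hat\rho_0$, which is irreducible, the fibres of $t:R\to X$ are 3-dimensional conjugation orbits. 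Hence every component through $\chi_0$, in particular $X_0(\Gamma)$, has dimension $\ge d$.

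An alternative to this count is Thurston's hyperbolic Dehn surgery space argument. It produces a $d$-complex-parameter family of incomplete hyperbolic structures, whose holonomies give characters varying open-holomorphically in the $d$ cusp parameters, and this again yields $\dim\ge d$.

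\textbf{Step 2 (upper bound).} I claim $\chi_0$ is an isolated point of $F^{-1}(F(\chi_0))$. Suppose a curve $C\subset X_0(\Gamma)$ through $\chi_0$ lay in this fibre. On $C$ all peripheral elements keep trace $\pm2$, so nearby representations still send every cusp group to parabolics. Thurston's local rigidity (Weil--Garland rigidity relative to parabolic cusps, equivalently Mostow--Prasad together with the local homeomorphism onto a neighborhood of the complete structure) says that representations near $\hat\rho_0$ with parabolic peripheral images are conjugate to $\hat\rho_0$. So $C$ would be constant, a contradiction. By upper semicontinuity of fibre dimension, the generic fibre of $F$ is then $0$-dimensional, hence $\dim X_0(\Gamma)\le d$.

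\textbf{Step 3.} Combining Steps 1 and 2 gives $\dim X_0(\Gamma)=d$, and $X_0(\Gamma)$ is a complex affine variety because it is an irreducible component of $X(\Gamma)$. The special cases $d=1,2$ give a curve and a surface respectively.

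\textbf{Main obstacle.} I expect Step 1 to be the hardest: making the relation count precise, namely showing that the $d$ boundary relations are redundant. I would first try the cohomological route. One computes $\dim H^1(\Gamma;\mathfrak{sl}_2)_{\operatorname{Ad}\hat\rho_0}=d$ using the long exact sequence of the pair $(M,\partial M)$ together with Poincaré duality, which makes the restriction to $\partial M$ a half-dimensional image. This has to be combined with the lower bound from the presentation, so that the Zariski tangent space dimension is actually attained, i.e. $\chi_0$ is a smooth point of dimension exactly $d$.
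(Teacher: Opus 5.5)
The paper gives no proof of this statement; it quotes Culler--Shalen. Their argument has your two-sided shape: Thurston's dimension estimate from below, and rigidity plus the peripheral trace map from above. Your Step~2 is a sound version of the upper bound.

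The genuine gap is Step~1. The half-lives-half-dies principle is a statement about the rank of $H^1(M;\mathfrak{sl}_2)\to H^1(\partial M;\mathfrak{sl}_2)$. It does not make any relator redundant. Also, if a whole relator per cusp really dropped out you would gain $3d$ dimensions, not the $d$ in your count $3g-3(g-1)+d$. What actually happens is that each cusp saves exactly one of the $3(g-1)$ scalar equations.

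The missing idea is a local fact about commutators. Suppose $(A,B)$ is close to a commuting pair $(A_0,B_0)$ with $A_0\neq\pm I$, and $Z$ spans the centralizer of $A_0$ in $\mathfrak{sl}_2$. Then $\operatorname{tr}\bigl(([A,B]-I)Z\bigr)=O\bigl(\epsilon\,|[A,B]-I|\bigr)$ with $\epsilon=|A-A_0|+|B-B_0|$. To see this, write $[A,B]-I=(AB-BA)A^{-1}B^{-1}$ and use that $AB-BA$ is trace-orthogonal to every matrix commuting with $A$.

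Here is how the lemma gives the count. By the implicit function theorem, $R(\Gamma)$ near $\hat\rho_0$ is the zero set of $\dim H^2(M;\mathfrak{sl}_2)$ analytic equations on a smooth germ of dimension $\dim Z^1$. Each boundary torus $T_i$ is carried by the spine, so its commutator relation is a product of conjugates of the relators. The lemma then shows that the equations in the $d$ directions of $H^2(\partial M;\mathfrak{sl}_2)\cong\mathbb{C}^d$ follow, near $\hat\rho_0$, from the remaining $\dim H^2-d$ equations. Here $H^2(M;\mathfrak{sl}_2)$ surjects onto $H^2(\partial M;\mathfrak{sl}_2)$ because $H^3(M,\partial M;\mathfrak{sl}_2)\cong H^0(M;\mathfrak{sl}_2)^*=0$. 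Since $\dim Z^1=\dim H^1+3$ and $\dim H^1=\dim H^2$ (as $\chi(M)=0$), you get $\dim_{\hat\rho_0}R(\Gamma)\ge \dim Z^1-(\dim H^2-d)=d+3$ on every component through $\hat\rho_0$. This is Thurston's estimate.

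Neither of your fallbacks supplies this.

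The exact sequence of $(M,\partial M)$ with Poincar\'e duality only gives $\dim H^1(\Gamma;\mathfrak{sl}_2)\ge d$. Equality needs the infinitesimal form of the relative rigidity you invoke in Step~2, namely injectivity of $H^1(M)\to H^1(\partial M)$ at $\hat\rho_0$. Even then, $\dim H^1=d$ bounds the Zariski tangent space, and hence the dimension, from \emph{above}. It is a second proof of Step~2, not a proof of Step~1.

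The hyperbolic Dehn surgery route does give a $d$-parameter family of characters through $\chi_0$, but it has two problems.
\begin{itemize}
\item Thurston's own proof of hyperbolic Dehn surgery starts from the very dimension estimate you are trying to establish. You would need an independent proof, such as ideal triangulations as in Neumann--Zagier, or Hodgson--Kerckhoff.
\item The family only shows that the one component containing it has dimension at least $d$. To conclude for \emph{every} canonical component, you must also know that $\chi_0$ lies on a unique component. That follows by combining the family with $\dim H^1=d$, which makes $\chi_0$ a smooth point of $X(\Gamma)$.
\end{itemize}

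With either repair the rest of your proof goes through.
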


\subsection{Character Varieties Associated to Dehn Fillings}
Let $M$ be a hyperbolic 3-manifold in $\mathbb{S}^3$ with two cusps, and let $G=\pi_1(M)$. Let $X_0(M)$ be a fixed canonical componnent of $X(M)$, which is an algebraic surface by Theorem \ref{Thurston's theorem}. Denote the two cusps by $T_1,T_2$ and we fix a cusp $T=T_1$ with a preferred pair of meridian and longitude $(\mu,\lambda)$. We perform a Dehn filling on $T$ with slope $\gamma=\frac{p}{q}$ to get an one-cusped manifold $M_\gamma$ in a possibly lens space(when $p\neq1)$. Let $G_{\gamma}:=\pi_1(M_{\gamma})=G/\langle\langle\gamma\rangle\rangle=G/\langle\langle \mu^p\lambda^q\rangle\rangle$. The surjective quotient map $G\longrightarrow G_\gamma$ will induce a closed embedding $j_{\gamma}$: $X(M_{\gamma}) \to X(M)$. Define $C_{\gamma}=j_{\gamma}(X(M_{\gamma})) \cap X_0(M)$. 

From the definition, $C_\gamma$ is an algebraic subvariety of $X_0(M)$. Clearly $C_\gamma \neq X_0(M)$ and hence $C_\gamma$ is a subvariety of dimension at most 1. If $\gamma$ is the slope of a hyperbolic Dehn filling, then by Theorem \ref{Thurston's theorem}, $C_\gamma$ will contain at least one irreducible curve with possibly some isolated 0-dimensional components. We call such a variety a generalized curve in $X_0(M)$. Throughout the paper,we assume $C_\gamma$ is a generalized curve. For two distinct slopes $\gamma \neq \delta$, we are interested in studying the intersection pattern $C_\gamma \cap C_\delta$ in the surface $X_0(M)$. 
\section{intersection pattern}
In this section, we study the intersection $C_\gamma \cap C_\delta$ defined in $\S 2.3$. This includes finding the common 1-dimensional components, computing the intersection number and multiplicity, etc. We begin with a short introduction to some fundamental concepts in algebraic geometry that will be used later in the construction of the algorithm. We then prove and compute some of the important properties of the boundary restriction map in the case of the Whitehead link. Based on these properties, we construct the algorithm which completely classifies the intersection patterns for the Whitehead link and we also discuss the generalization of the algorithm to other 2-bridge links. Finally, we provide a wealth of examples to demonstrate a broad spectrum of possibilities for the intersection patterns.

\subsection{preliminaries}\label{prelimaries 3.1}
We assume the readers are familiar with the basic concepts in algebraic geometry, such as varieties, schemes, dimension of schemes, function fields, morphism and rational maps etc. In this paper, we will only consider affine schemes of the form $X=\operatorname{Spec}(A)$ where $A=\C[x_1,x_2,...,x_n]/I$ and $I$ is some prime ideal of $\C[x_1,x_2,...,x_n]$. The closed points of these schemes typically correspond to the points on some particular component of the character varieties under consideration. In this subsection, we will discuss some fundamental concepts and results in algebraic geometry with respect to morphisms between schemes. We will only give the definitions in the setting of affine schemes mentioned above. For a more general treatment, we refer the reader to \cite{Hartshorne}, \cite{Shafarevich}, \cite{Poonen} and \cite{Milne1980} for more details. We fix the following setup: Let $X=\operatorname{Spec}(A),~ Y=\operatorname{Spec}(B),$ where $A$ and $B$ are integral domains that are also finitely generated $\mathbb C$-algebras, and let $\phi:X\longrightarrow Y$ be a morphism induced by a homomorphism $B\to A$. We assume that $\dim X=\dim Y$. We start with the following definitions. 

\begin{definition}\label{various morphisms}
Consider $\phi: X \longrightarrow Y$, then 

\begin{enumerate}
\item \textbf{Generically finite.}
Suppose that $\phi$ is dominant, i.e. $\overline{\phi(X)}=Y$. The morphism $\phi$ is \emph{generically finite} if the induced extension of function fields
$\C(Y)=\operatorname{Frac}(B)\ \longrightarrow \C(X)=\operatorname{Frac}(A)$
is finite. In our case, under the dominance assumption, the condition $\dim X=\dim Y$ already implies that $\phi$ is generically finite. When $\phi$ is generically finite, the degree of $\phi$ is defined to be $[\C(X):\C(Y)]$. 

\item \textbf{Finite.}
The morphism $\phi$ is \emph{finite} if $A$ is a finitely generated
$B$-module.

\item \textbf{Finite flat.}
The morphism $\phi$ is \emph{finite flat} if $A$ is a finite and
flat $B$-module.

\item \textbf{Finite étale.}
The morphism $\phi$ is \emph{finite étale} if it is finite flat and,
for every closed point $y\in Y$, the scheme-theoretic fiber $X_y=\operatorname{Spec}\bigl(A\otimes_B\kappa(y)\bigr)$ is reduced. Since $\kappa(y)=\mathbb C$, this means that $A\otimes_B\kappa(y)\cong\mathbb C^{\,r}$
for some integer $r\geq 0$. 
\end{enumerate}
\end{definition}

It is more convenient to interpret these various notions of morphisms in terms of fibers of $\phi$. To this end, we need the following definitions.

\begin{definition}\label{def of fiber multiplicity and total length}
Let $y\in Y$ be a closed point, corresponding to a
maximal ideal $\mathfrak m_y\subset B$, and suppose that the
set-theoretic fiber $\phi^{-1}(y)$ is finite. Set $R_y:=A/\mathfrak m_yA.$ For $x\in\phi^{-1}(y)$, let $\mathfrak m_x\subset A$ be its
corresponding maximal ideal. The \emph{multiplicity of $x$ in the
fiber over $y$} is
$$
e_x(y):=\dim_{\mathbb C}(R_y)_{\mathfrak m_x/\mathfrak m_yA}.
$$
The \emph{total length of the fiber over $y$} is
$$
L(y):=\sum_{x\in\phi^{-1}(y)}e_x(y)
     =\dim_{\mathbb C}R_y.
$$
If the fiber is empty, we set $L(y)=0$.
\end{definition}

We now have the following proposition, which translates the various notions of morphisms in terms of the fibers of $\phi$(see \cite{Milne1980}, Chapter I for more references). 

\begin{proposition}\label{characterization of morphisms using fiber}
For each closed point $y\in Y$, consider the set-theoretic fiber $\phi^{-1}(y)$. When this fiber is finite, let $e_x(y)$ denote the multiplicity of $x\in\phi^{-1}(y)$, and set $L(y)=\sum_{x\in\phi^{-1}(y)}e_x(y).$ Then:
\begin{enumerate}
\item $\phi$ is finite if and only if it is proper and
      $\phi^{-1}(y)$ is a finite set for every closed point $y\in Y$.

\item $\phi$ is finite flat if and only if it is finite and
      $L(y)$ is constant as $y$ ranges over the closed points of $Y$.

\item $\phi$ is finite étale if and only if it is finite and
      there is an integer $r\geq 1$ such that, for every closed
      point $y\in Y$, $\#\phi^{-1}(y)=r$ and $e_x(y)=1$ for every $x\in\phi^{-1}(y)$.
\end{enumerate}
\end{proposition}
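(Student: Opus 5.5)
Since $X=\operatorname{Spec}(A)$ and $Y=\operatorname{Spec}(B)$ are affine varieties over $\C$ with $A,B$ finitely generated domains, I will reduce each of the three items to standard commutative algebra: finiteness of modules, local freeness, and the reducedness criterion for fibers. I treat the items in order, each building on the previous.

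For (1), the forward direction is standard. A finite morphism is integral, hence universally closed; being affine it is also separated and of finite type, so it is proper. Its fibers are $\operatorname{Spec}(A\otimes_B\kappa(y))$, where $A\otimes_B\kappa(y)$ is a finite-dimensional $\C$-algebra and so has finitely many primes. For the converse I would invoke Zariski's Main Theorem in the form ``proper and quasi-finite implies finite.'' Finite set-theoretic fibers over closed points give quasi-finiteness at closed points. This extends to all points because the locus where the fiber dimension is zero is open (Chevalley's upper semicontinuity), and an open set containing all closed points of a Jacobson scheme is everything. This appeal to ZMT/EGA is where I would rely on citation (\cite{Milne1980}, \cite{Hartshorne}) rather than reprove.

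For (2), assume $\phi$ finite, so $A$ is a finitely generated $B$-module and $L(y)=\dim_\C A\otimes_B\kappa(y)$, which is the minimal number of generators of $A_{\mathfrak m_y}$ by Nakayama. If $A$ is flat over $B$, then it is finitely presented (since $B$ is Noetherian) and hence locally free. Because $Y$ is connected ($B$ a domain), the rank is constant, so $L(y)$ is constant. Conversely, suppose $L(y)=r$ for all closed $y$. Since $B$ is a domain, the generic rank is $\dim_{\C(Y)}A\otimes_B\C(Y)=:n$. Now apply the standard fact that a finitely generated module over a reduced Noetherian ring whose fiber dimension function $y\mapsto \dim \kappa(y)\otimes A$ is locally constant is locally free (Hartshorne II Ex.~5.8). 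Semicontinuity gives $n\le r$, and constancy on closed points extends to all points since every point specializes to a closed one and the fiber dimension is upper semicontinuous, with equality forced on a dense open set. I expect this converse to be the main point needing care, namely that the reducedness of $B$ is essential. Here it holds because $B$ is a domain.

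For (3), finite étale in the sense of Definition~\ref{various morphisms} means finite flat with $A\otimes_B\kappa(y)\cong\C^r$. The decomposition of an Artinian $\C$-algebra as a product of its localizations gives $R_y\cong\prod_x (R_y)_{\mathfrak m_x}$, and $R_y$ is reduced iff each local factor is $\C$, i.e.\ iff $e_x(y)=1$ for all $x$. Then $\#\phi^{-1}(y)=L(y)$. So if $\phi$ is finite étale, (2) gives $L$ constant $=r$, and all $e_x=1$ gives $\#\phi^{-1}(y)=r$. Here $r\ge1$ because a finite flat map to a connected $Y$ with $A\neq0$ is surjective. Conversely, $\#\phi^{-1}(y)=r$ and $e_x(y)=1$ give $L(y)=r$ constant, so $\phi$ is flat by (2), and the fibers are reduced, hence $\phi$ is finite étale.
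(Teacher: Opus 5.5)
Your argument is correct. The paper does not prove this proposition; it only refers to \cite{Milne1980}, Chapter I. So your sketch is less an alternative route than a filling-in of the standard argument that the citation stands for.

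What your version adds over the bare citation is that it shows where the standing hypotheses are used. In (2), passing from ``$L$ constant on closed points'' to local freeness uses that $B$ is reduced: over $B=\C[\varepsilon]/(\varepsilon^2)$, the finite module $A=B/(\varepsilon)$ has $L\equiv 1$ but is not flat. It also uses that $Y$ is Jacobson, so that upper semicontinuity together with generic freeness forces the fiber dimension to equal $r$ at every point of $Y$, not only at closed points. In (3), the bound $r\geq 1$ comes from $A\neq 0$.

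One simplification is available in (1). Since $X$ and $Y$ are both affine, $\phi$ is an affine morphism. An affine, universally closed morphism is integral, and an integral morphism of finite type is finite. So properness alone already gives finiteness here, and the finite-fiber clause is redundant in this setting. This lets you skip Zariski's Main Theorem and the extension of quasi-finiteness from closed points to all points.
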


\begin{remark}\label{etale is covering}
Finite étale morphism can be viewed as an algebraic analogue of a finite covering map in topology, and finite flat morphism is an analogue of a finite covering map where branching is allowed. 
\end{remark}

If $\phi: X \longrightarrow Y$ is a generically finite morphism, it is a standard result in algebraic geometry(cf. \cite{Milne1980}, Chapter I) that there is a nonempty open subset $U$ of $Y$ such that $\phi: \phi^{-1}(U) \longrightarrow U$ is finite(resp. finite flat and finite étale). We therefore have the following definition.  

\begin{definition}\label{finite,flat,etale locus}
Let $\phi:X \longrightarrow Y$ be a generically finite morphism. The finite(resp. finite flat and finite étale) locus of $\phi$ is the maximal open subset $U$ of $Y$ such that $\phi: \phi^{-1}(U) \longrightarrow U$ is finite(resp. finite flat and finite étale). The open set $U$ is denoted by $U_{fin}$(resp. $U_{flat}$ and $U_{\mathrm{\acute et}}$). 
\end{definition}

We now state an important theorem in algebraic geometry, called Miracle Flatness(cf. \cite{Eisenbud1995}\S 18,\cite{Matsumura1986}\S 23), which establishes the equivalence between finiteness and finite flatness under certain assumptions. We begin with the following definition.  

\begin{definition}\label{Cohen--Macaulay}
$X=\operatorname{Spec}(A)$ is \emph{Cohen--Macaulay} if for every prime ideal
$\mathfrak p\subseteq A$, $\operatorname{depth}(A_{\mathfrak p})=\dim(A_{\mathfrak p}).$ Here $\dim(A_{\mathfrak p})$ is the Krull dimension of the local ring $A_{\mathfrak p}$, and $\operatorname{depth}(A_{\mathfrak p})$ is the maximum length of an $A_{\mathfrak p}$-regular sequence contained in
its maximal ideal $\mathfrak pA_{\mathfrak p}$. 
\end{definition}

\begin{theorem}[Miracle Flatness]\label{miracle flatness}
Let $\phi:X \longrightarrow Y$ be a finite morphism. Assume that $Y$ is smooth, $X$ is Cohen--Macaulay, and $\operatorname{dim}(X)=\operatorname{dim}(Y)$. Then $\phi$ is flat. In particular, $\phi$ is finite flat.
\end{theorem}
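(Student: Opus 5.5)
The plan is to reduce to the local criterion for flatness over a regular local ring. Since $\phi$ is finite, $A$ is a finitely generated $B$-module. Flatness of a finitely generated module over a Noetherian ring is a local property, so it suffices to show that $A_{\mathfrak n}:=A\otimes_B B_{\mathfrak n}$ is flat over $B_{\mathfrak n}$ for every maximal ideal $\mathfrak n\subset B$, i.e. for every closed point $y\in Y$. Since $Y$ is smooth over $\C$, each $B_{\mathfrak n}$ is a regular local ring of dimension $d=\dim Y$. For a finitely generated module over a Noetherian local ring, flat is equivalent to free. So the target is to show that $A_{\mathfrak n}$ is a free $B_{\mathfrak n}$-module.

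The key tool is the Auslander--Buchsbaum formula. Over the regular local ring $B_{\mathfrak n}$, every finitely generated module $M$ has finite projective dimension, and
$$\operatorname{pd}_{B_{\mathfrak n}}M+\operatorname{depth}_{B_{\mathfrak n}}M=\operatorname{depth}B_{\mathfrak n}=d.$$
Hence freeness of $A_{\mathfrak n}$ follows once we show that $\operatorname{depth}_{B_{\mathfrak n}}A_{\mathfrak n}=d$, i.e. that $A_{\mathfrak n}$ is a maximal Cohen--Macaulay $B_{\mathfrak n}$-module. (If $A_{\mathfrak n}=0$ there is nothing to prove.)

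The main obstacle is passing from the Cohen--Macaulay property of $A$, which is intrinsic, to the depth of $A_{\mathfrak n}$ measured over $B_{\mathfrak n}$. I would proceed as follows.

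\textbf{Step 1: the maximal ideals over $\mathfrak n$ have height $d$.} The ring $A_{\mathfrak n}$ is semilocal, and its maximal ideals $\mathfrak m_1,\dots,\mathfrak m_k$ correspond to the finitely many points of $\phi^{-1}(y)$. Because $A$ is a domain finitely generated over $\C$ with $\dim A=d$, every maximal ideal of $A$ has height $d$. Thus $\dim A_{\mathfrak m_i}=d$.

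\textbf{Step 2: depth over $B$ equals depth over $A$ at each point.} Since $A$ is Cohen--Macaulay, $\operatorname{depth}A_{\mathfrak m_i}=d$. Because $A$ is finite over $B$, the radical of $\mathfrak nA_{\mathfrak m_i}$ is $\mathfrak m_iA_{\mathfrak m_i}$. Depth computed with respect to an ideal depends only on its radical, so the $\mathfrak n$-depth of $A_{\mathfrak m_i}$ equals its $\mathfrak m_i$-depth, which is $d$.

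\textbf{Step 3: glue over the semilocal ring.} Write $\operatorname{depth}_{\mathfrak n}A_{\mathfrak n}=\min_i\operatorname{depth}_{\mathfrak n}A_{\mathfrak m_i}$. This follows from the characterization of depth via vanishing of $\Ext^j_{B_{\mathfrak n}}(B_{\mathfrak n}/\mathfrak n,-)$: these Ext modules are supported on the maximal ideals, and they localize as a direct sum over the $\mathfrak m_i$. Equivalently, one can take a system of parameters $x_1,\dots,x_d$ of $B_{\mathfrak n}$ and check that it is $A_{\mathfrak m_i}$-regular for each $i$. By Steps 1 and 2, the result is $\operatorname{depth}_{B_{\mathfrak n}}A_{\mathfrak n}=d$.

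Auslander--Buchsbaum then gives $\operatorname{pd}A_{\mathfrak n}=0$, so $A_{\mathfrak n}$ is free, and $\phi$ is flat at $y$. Since $y$ was arbitrary, $\phi$ is finite flat.

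As a consistency check, Proposition \ref{characterization of morphisms using fiber}(2) then shows that $L(y)=\operatorname{rank}A_{\mathfrak n}=[\C(X):\C(Y)]$ is constant over $Y$.
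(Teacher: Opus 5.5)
The paper does not prove this statement; it quotes it as a standard fact from Eisenbud (\S 18) and Matsumura (\S 23). Your argument is correct, but it takes a different route from those references.

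\textbf{Comparison with the cited proof.} There the result is stated for an arbitrary local homomorphism $B_{\mathfrak n}\to A_{\mathfrak m}$ with $B_{\mathfrak n}$ regular, $A_{\mathfrak m}$ Cohen--Macaulay and $\dim A_{\mathfrak m}=\dim B_{\mathfrak n}+\dim A_{\mathfrak m}/\mathfrak nA_{\mathfrak m}$. The usual proof cuts down along a regular system of parameters of the base and invokes the local criterion of flatness, so no finiteness is needed. You instead use finiteness. Since $A_{\mathfrak n}$ is a finitely generated module over the regular local ring $B_{\mathfrak n}$, flat means free, and Auslander--Buchsbaum reduces everything to a depth count on the semilocal ring $A_{\mathfrak n}$. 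This is perfectly adequate for the paper's application to $\phi^{-1}(U_{fin})\to U_{fin}$.

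\textbf{Two points to make explicit.}

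First, Step 1 and the claim $\dim B_{\mathfrak n}=d$ use that $A$ and $B$ are affine domains, i.e.\ the standing convention of \S 3.1. This is not cosmetic. With only the global equality $\dim X=\dim Y$ the statement as written would be false: take $X=Y\sqcup\{\mathrm{pt}\}$ with the point mapped into $Y$. What is really needed is $\dim A_{\mathfrak m}=\dim B_{\mathfrak n}$ at every closed point. The domain hypothesis supplies this, and it does hold for $\phi^{-1}(U_{fin})=\operatorname{Spec}R_s$.

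Second, the $\Ext$ version of Step 3 is slightly awkward. The module $A_{\mathfrak m_i}$ is not finitely generated over $B_{\mathfrak n}$, so the $\Ext$ characterization of depth is not directly available for it. The system-of-parameters version you mention is the clean one:
\begin{enumerate}
\item A regular system of parameters $x_1,\dots,x_d$ of $B_{\mathfrak n}$ generates an $\mathfrak m_i$-primary ideal in each $d$-dimensional Cohen--Macaulay ring $A_{\mathfrak m_i}$.
\item Hence it is $A_{\mathfrak m_i}$-regular for each $i$.
\item Regularity on $A_{\mathfrak n}$ can be checked after localizing at the $\mathfrak m_i$.
\end{enumerate}
Once you have this, you are essentially at the textbook argument and can skip Auslander--Buchsbaum. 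The Koszul complex on $x_1,\dots,x_d$ resolves $\C$ over $B_{\mathfrak n}$, so $\operatorname{Tor}_1^{B_{\mathfrak n}}(\C,A_{\mathfrak n})=H_1(x_1,\dots,x_d;A_{\mathfrak n})=0$. This forces the finitely generated module $A_{\mathfrak n}$ to be free.
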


\begin{remark}
    Any hypersurface in $\C^n$ is Cohen--Macaulay. This fact will be useful in proving lemma \ref{flat locus}. 
\end{remark}

Finally, we introduce the definition of the intersection multiplicity for two curves in a surface $X=\operatorname{Spec}(A)$. Let $X$ be an algebraic surface. Let $C_1, C_2 \subset X$ be two curves with no 0-dimensional components. We assume that $C_1,C_2$ have no common component so that all the intersection points are isolated. Then we have the following definition. 

\begin{definition}\label{interseciton multiplicity}
For any $p \in C_1 \cap C_2$, let $I_{C_1,p}, I_{C_2,p} \subset O_{X,p}$ be the associated ideals of $C_1,C_2$ at $p$. Then the intersection multiplicity of $C_1,C_2$ at $p$ is defined to be 
$$
i_p(C_1,C_2;X)=\operatorname{length}_{O_{X,p}}\frac{O_{X,p}}{I_{C_1,p}+I_{C_2,p}}.
$$
\end{definition}

\begin{remark}
Since finite étale morphisms behave like finite covering maps, the local intersection multiplicity of two curves is preserved under finite étale morphism. See subsection \ref{algorithm and generalization} for more clarification and application. 
\end{remark}

\bigskip
\bigskip

\subsection{properties of boundary restriction map}\label{property of phi for whitehead}
As mentioned in the introduction, the key idea of our approach is to consider the boundary restriction map $\phi$ from the canonical component of the given character variety $X_0(M)$ onto the character variety of the corresponding boundary torus $X(T)$. Then, the intersection pattern can be analyzed through the push-forward and pull-back of $\phi$, provided that we have a thorough understanding of $\phi$ itself, e.g. knowing its map degree, finite(flat, étale)locus, singular fibers etc. In this subsection, we take the Whitehead link as an illustrative example to demonstrate the definition of the map $\phi$ and to show how various properties of $\phi$ can be computed and proved.

Let $M$ be the complement of a neighborhood of the Whitehead link in $\mathbb{S}^3$. We use the following presentation for the fundamental group of $M$
\begin{equation}\label{fundamentral group presentation for whitehead}
\pi_1(M)=\left\langle a,b\ \middle |~ aw=wa \right\rangle,\qquad
w=bab^{-1}a^{-1}b^{-1}ab.
\end{equation}
For a representation $\rho:\pi_1(M)\longrightarrow \operatorname{SL}_2(\C)$, write $A=\rho(a),~B=\rho(b)$ and $W=\rho(w)$. We introduce the trace coordinates 
\begin{equation}\label{trace coordinates x,y,z}
x=\tr(A),\qquad y=\tr(B),\qquad z=\tr(AB).
\end{equation}

From \cite{Landes}, the canonical component of the Whitehead link character variety is the irreducible affine surface $X=X_0(M)=V(F)\subset \A^3_{x,y,z},$
where 
\begin{equation}\label{defining equation for X_0(M) Whitehead}
F(x,y,z)=z^3-xyz^2+(x^2+y^2-2)z-xy.
\end{equation}
Define $\kappa=x^2+y^2+z^2-xyz-2.$ Then $F=z\kappa-xy.$ Hence, in both the coordinate ring and the function field of $X$, we have the relation $z\kappa=xy.$ Let $K=\C(X)$ denote the function field of $X$.

We fix one of the boundary tori $T$ with peripheral basis $\mu=a,~\lambda=wa^{-1}.$ The $\operatorname{SL}_2(\C)$-character variety of the boundary
torus is $Y=X(T)=V(G)\subset \A^3_{m,\ell,n},$ where 
\begin{equation}\label{defining equation for X(T)}
G=m^2+\ell^2+n^2-m\ell n-4.
\end{equation}
with the boundary trace coordinates given by
\begin{equation}\label{trace coordinates m,l,n}
m=\tr(\rho(\mu)),\qquad \ell=\tr(\rho(\lambda)),\qquad n=\tr(\rho(\mu\lambda)).
\end{equation}
Since $\mu=a,~\lambda=wa^{-1},~\mu\lambda=w,$ we have
$m=x=\tr(A),~\ell=\tr(WA^{-1}),~\ n=\tr(W)$. The inclusion $T\hookrightarrow M$ induces the boundary restriction morphism $\phi:X\longrightarrow Y$ with $\phi(x,y,z)=\bigl(m,\ell,n\bigr)$. We first want to find an explicit expression for the map $\phi$. To this end, we need the following lemma(see \cite{Mac_and_Reid} $\S 3.4$).

\begin{lemma}{(trace identities)}\label{trace identities} For any $U,V\in\operatorname{SL}_2(\mathbb C)$, we have

    (1). $\operatorname{tr}(UV)=\operatorname{tr}(VU), ~~\operatorname{tr}(U)=\operatorname{tr}(U^{-1})$

    (2). $\operatorname{tr}(UV)+\operatorname{tr}(UV^{-1})=\operatorname{tr}(U)\operatorname{t}(V)$

    (3). $\operatorname{tr}(U_1U_2\cdots U_k)=\operatorname{tr}(U_2\cdots U_kU_1)$

    (4). $\operatorname{tr}([U,V])=\operatorname{tr}^{2}(U)+\operatorname{tr}^{2}(V)+\operatorname{tr}^{2}(UV) -\operatorname{tr}(U)\operatorname{tr}(V)\operatorname{tr}(UV)-2$

    (5). $\operatorname{tr}(U^k) =2S_{k}(\operatorname{tr}(U)) -\operatorname{tr}(U)S_{k-1}(\operatorname{tr}(U))$. In particular, $\operatorname{tr}(U^2) =\operatorname{tr}^2(U) -2$.
\end{lemma}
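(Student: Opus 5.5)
We argue directly with explicit matrices. Every identity in the lemma is polynomial in the entries of $U,V\in\operatorname{SL}_2(\mathbb C)$, so a direct calculation suffices. The one structural input is the Cayley--Hamilton theorem: for $U\in\operatorname{SL}_2(\mathbb C)$ it gives
\[
U^2-\operatorname{tr}(U)U+I=0,
\qquad\text{equivalently}\qquad
U+U^{-1}=\operatorname{tr}(U)\,I .
\]
We prove the five parts in order, since later parts use earlier ones.

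\textbf{Part (1).} Cyclic invariance $\operatorname{tr}(UV)=\operatorname{tr}(VU)$ is the standard computation $\sum_{i,j}U_{ij}V_{ji}$. For the inverse, write $U=\begin{pmatrix}a&b\\c&d\end{pmatrix}$ with $ad-bc=1$. Then $U^{-1}=\begin{pmatrix}d&-b\\-c&a\end{pmatrix}$, so $\operatorname{tr}(U^{-1})=a+d=\operatorname{tr}(U)$.

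\textbf{Part (3).} This follows from (1) by applying it to $U_1$ and to the product $U_2\cdots U_k$.

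\textbf{Part (2).} Multiply $V+V^{-1}=\operatorname{tr}(V)I$ on the left by $U$ and take traces. This gives $\operatorname{tr}(UV)+\operatorname{tr}(UV^{-1})=\operatorname{tr}(U)\operatorname{tr}(V)$.

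\textbf{Part (5).} Take $S_k$ to be the Chebyshev-type polynomials with $S_0=0$, $S_1=1$ and $S_{k+1}(t)=tS_k(t)-S_{k-1}(t)$. I believe this is the normalization in \cite{Mac_and_Reid}, and it must be checked there. Multiplying Cayley--Hamilton by $U^{k-1}$ gives $U^{k+1}=\operatorname{tr}(U)U^k-U^{k-1}$. By induction,
\[
U^k=S_k(\operatorname{tr} U)\,U-S_{k-1}(\operatorname{tr} U)\,I .
\]
Taking traces yields
\[
\operatorname{tr}(U^k)=\operatorname{tr}(U)S_k-2S_{k-1}.
\]
This is equivalent to the stated form $2S_k-\operatorname{tr}(U)S_{k-1}$ because of the recurrence: both sides satisfy the same recurrence and agree at $k=0,1$. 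Checking this normalization is the step most likely to need care. For $k=2$ we have $S_2=t$, and the formula gives $\operatorname{tr}(U^2)=\operatorname{tr}^2(U)-2$.

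\textbf{Part (4).} This is the main computation. Set $u=\operatorname{tr}U$, $v=\operatorname{tr}V$, $w=\operatorname{tr}(UV)$. By (3), $\operatorname{tr}([U,V])=\operatorname{tr}\bigl((UV)(U^{-1}V^{-1})\bigr)$. Apply (2) with the pair $UV$ and $VU$, using $(VU)^{-1}=U^{-1}V^{-1}$:
\[
\operatorname{tr}(UVU^{-1}V^{-1})=\operatorname{tr}(UV)\operatorname{tr}(VU)-\operatorname{tr}(UV\,VU)=w^2-\operatorname{tr}(UV^2U).
\]
By (3), $\operatorname{tr}(UV^2U)=\operatorname{tr}(U^2V^2)$. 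Substitute $U^2=uU-I$ and $V^2=vV-I$ from Cayley--Hamilton:
\[
\operatorname{tr}(U^2V^2)=uv\,w-u\cdot u-v\cdot v+2.
\]
Combining these gives
\[
\operatorname{tr}([U,V])=u^2+v^2+w^2-uvw-2 .
\]
The only potential obstacle here is sign bookkeeping, which this expansion handles.
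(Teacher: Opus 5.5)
\textbf{Parts (1)--(4) are correct; Part (5) has a genuine gap.} The paper gives no proof of this lemma and only cites \cite{Mac_and_Reid}~\S 3.4. Your derivation of (4), applying (2) to the pair $UV$, $VU$ and then expanding $\operatorname{tr}(U^2V^2)$ with Cayley--Hamilton, is correct.

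\textbf{The gap in Part (5).} With your normalization $S_0=0$, $S_1=1$, and writing $t=\operatorname{tr}U$, the identity you actually prove is $\operatorname{tr}(U^k)=tS_k-2S_{k-1}$. That identity is true. The stated identity $\operatorname{tr}(U^k)=2S_k-tS_{k-1}$, however, is false under your normalization: at $k=2$ it gives $2S_2-tS_1=2t-t=t\neq t^2-2$. Your claimed equivalence breaks exactly at the step you rely on. The sequences $tS_k-2S_{k-1}$ and $2S_k-tS_{k-1}$ do satisfy the same recurrence, but they do not agree at the initial values. At $k=1$ they equal $t$ and $2$, and at $k=0$ (using $S_{-1}=-1$) they equal $2$ and $t$. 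Your $k=2$ sanity check was run on your own formula, not on the stated one.

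\textbf{How to repair it.} Use the normalization the statement forces: $S_{-1}=0$, $S_0=1$, $S_{k+1}=tS_k-S_{k-1}$. This gives $S_1=t$, $S_2=t^2-1$, i.e.\ $S_k(2\cos\theta)=\sin((k+1)\theta)/\sin\theta$, which is your sequence shifted by one index.

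\begin{itemize}
\item Given the recurrence, the conditions $2S_0-tS_{-1}=2$ and $2S_1-tS_0=t$ determine this choice uniquely. So this is not a matter of convention.
\item Your Cayley--Hamilton induction then reads $U^k=S_{k-1}(t)\,U-S_{k-2}(t)\,I$, so $\operatorname{tr}(U^k)=tS_{k-1}-2S_{k-2}$.
\item Substituting $S_{k-2}=tS_{k-1}-S_k$ gives $2S_k-tS_{k-1}$, equivalently $S_k-S_{k-2}$.
\item Alternatively, both $\operatorname{tr}(U^k)$ and $2S_k-tS_{k-1}$ satisfy $a_{k+1}=ta_k-a_{k-1}$. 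With this normalization they now agree at $k=0$ (both equal $2$) and at $k=1$ (both equal $t$), which completes the argument.
\end{itemize}
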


We now find an explicit expression for the boundary restriction map $\phi$. 
\begin{lemma}\label{trace functions of n and l}
Let $\phi: X=X_0(M) \longrightarrow Y=X(T)$ be the boundary restriction map. Then the coordinate maps $m(x,y,z),\ell(x,y,z),n(x,y,z)$ for $\phi$ can be given as follows 
$$
m=\operatorname{tr}(A)=x,
$$
$$
n=\operatorname{tr}(W)=(x-yz)\kappa-x+2yz,
$$
and 
$$
\ell=\operatorname{tr}(WA^{-1})=\kappa^2-y^2\kappa+2y^2-2.
$$
\end{lemma}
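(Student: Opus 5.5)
The plan is to compute both traces first as polynomials in the free-group Fricke coordinates $x,y,z$, using only Lemma \ref{trace identities}. Then I reduce the results modulo the defining polynomial $F=z\kappa-xy$ of $X$, which is legitimate because $m,\ell,n$ are regular functions on $X=V(F)$. The formula $m=x$ is immediate since $\mu=a$.

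For $n=\operatorname{tr}(W)$, I would first rewrite the word as $w=[b,a]\cdot b^{-1}ab$, where $[b,a]=bab^{-1}a^{-1}$. By Lemma \ref{trace identities}(4), $\operatorname{tr}[B,A]=x^2+y^2+z^2-xyz-2=\kappa$, which explains why $\kappa$ appears naturally.

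I would then shorten the word step by step:
\begin{itemize}
\item Use cyclic invariance (3) to write $\operatorname{tr}(W)=\operatorname{tr}(AB^{-1}A^{-1}B^{-1}AB^2)$.
\item Apply the Cayley--Hamilton relations $B^2=yB-I$ and $A^2=xA-I$, together with identity (2) (equivalently $U^{-1}=\operatorname{tr}(U)I-U$), to trade each occurrence of a square or inverse for words of strictly smaller length.
\item Continue until every term is a trace of a word of length at most two in $A^{\pm1},B^{\pm1}$, or is the commutator trace $\kappa$. Each such term is then a polynomial in $x,y,z,\kappa$.
\end{itemize}
This produces an explicit polynomial $P_n(x,y,z)$. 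I then check that $P_n-\bigl((x-yz)\kappa-x+2yz\bigr)$ lies in the ideal $(F)$. Concretely, I divide by $F$ as a polynomial monic in $z$ (its leading term is $z^3$), or I substitute $xy=z\kappa$ wherever it appears, and verify that the remainder is zero.

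For $\ell=\operatorname{tr}(WA^{-1})$, I would proceed in the same way with the longer word $ba b^{-1}a^{-1}b^{-1}aba^{-1}$. After a cyclic rotation it becomes $a^{-1}bab^{-1}a^{-1}b^{-1}ab$, which again contains a commutator block. The same reduction yields a polynomial $P_\ell$. I then verify that $P_\ell-(\kappa^2-y^2\kappa+2y^2-2)\in(F)$.

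As a consistency check, I would confirm that the resulting triple satisfies the torus relation $G(m,\ell,n)=m^2+\ell^2+n^2-m\ell n-4=0$ modulo $F$. This must hold because $\mu$ and $\lambda$ commute, so $\operatorname{tr}[\mu,\lambda]=2$. It is a strong test that no sign or term has been dropped.

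The main obstacle is the bookkeeping in the reduction for $\ell$. The word has length eight, so the raw polynomial $P_\ell$ has high degree in $z$, and recognising it as the compact expression $\kappa^2-y^2\kappa+2y^2-2$ only works after reduction modulo $F$. I would handle this by systematically replacing $xy$ with $z\kappa$ and reducing powers of $z$ via $z^3=xyz^2-(x^2+y^2-2)z+xy$. In practice I would carry out this final normal-form comparison with a computer algebra system (a Gröbner or remainder computation in $\mathbb{C}[x,y,z]/(F)$), and use the $G=0$ check as an independent verification.
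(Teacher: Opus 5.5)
Your plan is correct, but it takes a brute-force route to what the paper does in a few lines. You actually write down the key decomposition $w=[b,a]\cdot b^{-1}ab$, then rotate it away. The paper instead applies identity (2) to $W=CD$ with $C=BAB^{-1}A^{-1}$ and $D=B^{-1}AB$, which gives $n=x\kappa-\operatorname{tr}(CD^{-1})$. A single Cayley--Hamilton step $B^2=yB-I$ then reduces $\operatorname{tr}(CD^{-1})$ to $y(\kappa z-z)-(yz-x)$. For $\ell$, the paper splits $WA^{-1}=[B,A]\cdot[B^{-1},A]$; both factors have trace $\kappa$, so (2) gives $\ell=\kappa^2-\operatorname{tr}(B^2AB^{-2}A^{-1})$, and (4) evaluates $\operatorname{tr}[B^2,A]=y^2\kappa-2y^2+2$.

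This decomposition shows that both formulas are identities in $\mathbb{Z}[x,y,z]$, valid for every pair $A,B\in\operatorname{SL}_2(\mathbb{C})$: the relation $F=0$ is never used. So the obstacle you anticipate does not exist. Your Fricke polynomials $P_n$ and $P_\ell$ will already equal the target expressions exactly, and your ideal-membership test passes with difference literally $0$.

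Your approach is still valid and mechanically checkable, and the test $G(m,\ell,n)\equiv 0 \bmod F$ is a reasonable sanity check. As written, though, it defers the whole content of the lemma to a computer-algebra computation. The paper's decomposition, by contrast, is hand-checkable and proves the stronger free-group identity.

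One small point if you carry out the reduction by hand: Cayley--Hamilton on squares and the single-letter inverse rule $U^{-1}=\operatorname{tr}(U)I-U$ do not shorten alternating words such as $ABAB$. To finish you need (2) applied to longer subwords, or (5) applied to $U=AB$.
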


\begin{proof}
We begin with some trace identities. Using \ref{trace identities} (5), we have 
$$
\operatorname{tr}(A^2)=x^2-2,\qquad\operatorname{tr}(B^2)=y^2-2,\qquad 
\operatorname{tr}\bigl((AB)^2\bigr)=z^2-2.
$$
We also use Cayley-Hamilton to derive
$$
A^2=\operatorname{tr}(A)A-I=x \cdot A-I,\qquad B^2=\operatorname{tr}(B)B-I=y \cdot B-I.
$$
Therefore,
$$
\operatorname{tr}(A^2B)=xz-y, \qquad  \operatorname{tr}(AB^2)=yz-x.
$$

The fact that $m=x$ follows easily from $a=\mu$. We then compute $n=\operatorname{tr}(W)$. Write $C=BAB^{-1}A^{-1},~ D=B^{-1}AB.$ Then $W=CD.$ By \ref{trace identities} (3), it is clear that $\operatorname{tr}(D)=\operatorname{tr}(A)=x.$ Using \ref{trace identities} (2), we have $\operatorname{tr}(CD)+\operatorname{tr}(CD^{-1})=\operatorname{tr}(C)\operatorname{tr}(D).$ Thus 
$$
n=x \cdot \operatorname{tr}(C)-\operatorname{tr}(CD^{-1}).
$$ 
By \ref{trace identities} (4), $\operatorname{tr}(C)=x^2+y^2+z^2-xyz-2=\kappa.$ It remains to compute $S:=\operatorname{tr}(CD^{-1}).$ Since $D^{-1}=B^{-1}A^{-1}B,$ we have $CD^{-1}=BAB^{-1}A^{-1}B^{-1}A^{-1}B.$
Using \ref{trace identities} (3) and Cayley-Hamilton with $B^2=\operatorname{tr}(B)B-I$, we have
$$
S=\operatorname{tr}\left(B^2AB^{-1}A^{-1}B^{-1}A^{-1}\right)=y\cdot \operatorname{tr}(BAB^{-1}A^{-1}B^{-1}A^{-1}) -\operatorname{tr}(AB^{-1}A^{-1}B^{-1}A^{-1}).
$$
Then using \ref{trace identities} (2) and (1), we have $\operatorname{tr}(AB^{-1}A^{-1}B^{-1}A^{-1})=\operatorname{tr}(AB^2)=yz-x$. Separate $BAB^{-1}A^{-1}B^{-1}A^{-1}$ into the product of $BAB^{-1}A^{-1}$ and $B^{-1}A^{-1}$. Then applying \ref{trace identities} (2), we can derive $\operatorname{tr}(BAB^{-1}A^{-1}B^{-1}A^{-1})=\kappa z-z$. Collecting all the identities gives 
$$
n=(x-yz)\kappa-x+2yz. 
$$ 

Now, we want to compute $\ell=\operatorname{tr}(WA^{-1})$. Notice that $WA^{-1}=BAB^{-1}A^{-1}B^{-1}ABA^{-1}$. Separate $WA^{-1}$ into the product of $BAB^{-1}A^{-1}$ and $B^{-1}ABA^{-1}$. Then we can apply \ref{trace identities} (2) and (3) to derive
$$
\operatorname{tr}(WA^{-1})=\operatorname{tr}(BAB^{-1}A^{-1})\operatorname{tr}(B^{-1}ABA^{-1}) - \operatorname{tr}(B^{2}AB^{-2}A^{-1})
$$
Each trace term on the right-hand side of the equation above can be evaluated using \ref{trace identities} (4) and the identities $\operatorname{tr}(B^{-1}A)=xy-z,~\operatorname{tr}(B^2)=y^2-2,~\operatorname{tr}(B^2A)=yz-x$. After direct computations, we can derive 
$$
\ell=\kappa^2-y^2\kappa+2y^2-2.
$$
This completes the proof.
\end{proof}

\begin{remark}
If the other cusp of the Whitehead link is chosen, then the expression for $\phi$ will essentially be the same, except that the positions for variables $x$ and $y$ are exchanged. This is due to the symmetry of the two cusps in the Whitehead link, which also holds for any 2-bridge link in $\mathbb{S}^3$. Therefore, for 2-bridge links, it does not matter which cusp we pick to define the boundary restriction map. 
\end{remark}

The first property of the boundary restriction map $\phi$ we want to show is the following, which we need lemma \ref{quadratic extension of K over C(x,kappa)} and \ref{C(m,l,n)=C(x,kappa)} for its proof. 

\begin{theorem}\label{restriction morphism is dominant and degree 2}
The boundary restriction morphism $\phi:X_0(M)\longrightarrow X(T)$ is dominant and generically finite of degree $2$.
\end{theorem}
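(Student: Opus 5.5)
The proof will go through a chain of function fields
$$\C(Y)=\C(m,\ell,n)\subseteq \C(x,\kappa)\subseteq K=\C(X).$$
The plan is to show that the first inclusion is an equality (the content of Lemma \ref{C(m,l,n)=C(x,kappa)}) and that the second is a quadratic extension (Lemma \ref{quadratic extension of K over C(x,kappa)}). Then $[K:\C(Y)]=2$. A finite extension of function fields of varieties of equal dimension forces $\phi$ to be dominant, and then Definition \ref{various morphisms} gives generic finiteness with degree $2$.

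\textbf{Step 1: $K$ over $\C(x,\kappa)$.} In $K$ we have $z\kappa=xy$, so $z=xy/\kappa$; note that $\kappa\neq 0$ in $K$, since $\kappa\in(F)$ would force $xy\in(F)$. Substituting into $\kappa=x^2+y^2+z^2-xyz-2$ gives
$$\kappa=x^2+y^2+\frac{x^2y^2}{\kappa^2}-\frac{x^2y^2}{\kappa}-2.$$
This is a quadratic equation for $y^2$ over $\C(x,\kappa)$, namely
$$y^2\Bigl(1+\frac{x^2}{\kappa^2}-\frac{x^2}{\kappa}\Bigr)=\kappa-x^2+2.$$
Hence $y^2\in\C(x,\kappa)$ and $K=\C(x,\kappa)(y)$. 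Since $\dim X=2$, the elements $x,\kappa$ are algebraically independent. To show the extension has degree exactly $2$, I will check that $y\notin\C(x,\kappa)$. The cleanest route is to show that the rational function $y^2=\kappa^2(\kappa-x^2+2)/(\kappa^2+x^2-x^2\kappa)$ is not a square in $\C(x,\kappa)$. For example, the factor $\kappa-x^2+2$ is irreducible, coprime to the denominator, and appears to an odd power.

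\textbf{Step 2: $\C(m,\ell,n)=\C(x,\kappa)$.} By Lemma \ref{trace functions of n and l}, $m=x$ and $\ell=\kappa^2-y^2\kappa+2y^2-2$. Since $y^2\in\C(x,\kappa)$, we get $\ell\in\C(x,\kappa)$. For $n$, write $n=x\kappa-x+yz(2-\kappa)$ with $yz=xy^2/\kappa$; this also lies in $\C(x,\kappa)$. So $\C(Y)\subseteq\C(x,\kappa)$.

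For the reverse inclusion, substitute the formula for $y^2$ into $\ell$ and $n$. Then $\ell$ and $n$ become explicit rational functions of $(x,\kappa)$, and I need to recover $\kappa$ from $(m,\ell,n)$. The natural move is to exploit that $n$ is affine in $yz\cdot(2-\kappa)$ and $\ell$ is affine in $y^2(2-\kappa)$: combining $n$, $m\ell$ and $m$ should cancel the $y$-dependence and isolate $\kappa$. For instance, compute $n-x\kappa+x$ versus $x(\ell-\kappa^2+2)/\kappa$, using $yz=xy^2/\kappa$. These two equal each other, giving a polynomial relation in $\kappa$ with coefficients in $\C(m,\ell,n)$.

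\textbf{Main obstacle.} The hard step is the reverse inclusion in Step 2, where I have to show $\kappa\in\C(m,\ell,n)$ and not merely that it is algebraic over it. If the elimination only yields a polynomial in $\kappa$ of degree greater than $1$, I would fall back on a degree count. Since $\C(Y)\subseteq\C(x,\kappa)$, the degree $[K:\C(Y)]$ is a multiple of $2$. It then suffices to find one general point $(m,\ell,n)\in Y$ whose fiber $\phi^{-1}$ consists of exactly two reduced points. Solving $x=m$ together with the explicit equations for $\ell,n$ at a specific numerical point would do this, and it would force the degree to be $2$. Dominance follows in any case: $\C(Y)\to K$ is injective, because $\C(x,\kappa)$ already has transcendence degree $2$ and $\C(Y)$ is a subfield over which $K$ is algebraic.
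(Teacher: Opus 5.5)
Your argument is correct in outline. It uses the same tower $\C(m,\ell,n)\subseteq\C(x,\kappa)\subseteq K$ as the paper, but both steps are done differently.

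\textbf{Step 1.} For $[K:\C(x,\kappa)]=2$ the paper takes $z$ as generator and uses the involution $\tau(x,y,z)=(x,-y,-z)$, which fixes $x$ and $\kappa$ but not $z$. You take $y$ as generator and show $y^2$ is not a square, because it has valuation $1$ along the prime $\kappa-x^2+2$. That prime is coprime to $\kappa^2-x^2\kappa+x^2$, since the latter becomes $4-x^2$ at $\kappa=x^2-2$. Your argument is valid and self-contained. Away from $x=\pm2$, your odd prime cuts out exactly the ramification curve $y=z=0$ of Theorem \ref{etale locus of phi}. The paper's choice pays off later, because $\tau$ is reused to identify $U_{fin}$ with $\phi^{-1}(U_{fin})/\langle\tau\rangle$. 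In both versions you should check that $\kappa^2-x^2\kappa+x^2\neq 0$ in $K$ before dividing; it equals $3$ at $(1,0,0)\in X$. This check is also what makes $K$ algebraic over $\C(x,\kappa)$, and hence what justifies your claim that $x$ and $\kappa$ are algebraically independent.

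\textbf{Step 2.} You stop just before the end, but the elimination you propose is linear in $\kappa$. From the formulas of Lemma \ref{trace functions of n and l} one gets the polynomial identity
\[
\kappa(m+n)-m(\ell+2)=(2-\kappa)\,y\,(\kappa z-xy)=(2-\kappa)\,y\,F .
\]
So $\kappa(m+n)=m(\ell+2)$ on $X$. Since $m+n=-1$ at $(1,0,0)$, you get $\kappa=m(\ell+2)/(m+n)\in\C(m,\ell,n)$, and the ``main obstacle'' disappears. This is shorter than the paper's route, which derives two quadratic relations for $\kappa$ over $\C(u,t,\ell)$ and combines them into a formula with a more complicated denominator.

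\textbf{The fallback.} Drop it, since as stated it would not work. A single fiber consisting of two reduced points bounds $\deg\phi$ from above only if $\phi$ is known to be finite and flat over a neighborhood of that point. Otherwise preimages can escape to infinity: over $E_1\cup E_2$ the fibers are even empty, although $\deg\phi=2$. Certifying that a numerical point is ``general'' would require an analysis like Lemma \ref{finite locus}.

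\textbf{Dominance.} State it as the paper does. The subfield $\C(m,\ell,n)\subseteq K$ has transcendence degree $2$ because $K$ is algebraic over it. Hence $\overline{\phi(X)}$ is a $2$-dimensional closed subset of the irreducible surface $Y$, so it equals $Y$. Speaking of a map $\C(Y)\to K$ before dominance is known is circular.
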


\begin{lemma}\label{quadratic extension of K over C(x,kappa)}
Let $K$ be the function field of $X_{0}(M)$ and view $x,\kappa$ as elements in $K$. Then 

(1). $[K:\C(x,\kappa)]=2.$

(2). $\trdeg_{\C}\C(x,\kappa)=2$. Therefore, $x$ and $\kappa$ are algebraically independent over $\C$.
\end{lemma}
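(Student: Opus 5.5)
Write $K=\C(X)=\operatorname{Frac}\bigl(\C[x,y,z]/(F)\bigr)$, which is a field of transcendence degree $2$ over $\C$ since $X=V(F)$ is an irreducible surface. We first show that $K$ is generated over $\C(x,\kappa)$ by $y$ alone, and that $y$ satisfies a quadratic equation there. From $z\kappa=xy$ in $K$ we get $z=xy/\kappa$, provided $\kappa\neq 0$ in $K$. This holds because $F$ is irreducible and $\kappa$ does not vanish identically on $V(F)$: if $\kappa\in(F)$, then $xy=z\kappa-F\in(F)$, which is impossible by degree considerations since $F$ has degree $3$ in $z$. Hence $K=\C(x,y,z)=\C(x,\kappa)(y)$.

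Next we substitute $z=xy/\kappa$ into the definition $\kappa=x^2+y^2+z^2-xyz-2$ and clear denominators by multiplying through by $\kappa^2$. This gives
\begin{equation*}
\kappa^3=(x^2-2)\kappa^2+y^2\kappa^2+x^2y^2-x^2y^2\kappa ,
\end{equation*}
which we rearrange as
\begin{equation*}
y^2\bigl(\kappa^2-x^2\kappa+x^2\bigr)=\kappa^3-(x^2-2)\kappa^2 .
\end{equation*}
Thus $y^2\in\C(x,\kappa)$ as soon as $\kappa^2-x^2\kappa+x^2\neq 0$ in $K$. Consequently $[K:\C(x,\kappa)]\le 2$, and in particular $K$ is algebraic over $\C(x,\kappa)$.

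Part (2) follows immediately: since $K/\C(x,\kappa)$ is algebraic, $\trdeg_{\C}\C(x,\kappa)=\trdeg_{\C}K=2$, so $x$ and $\kappa$ are algebraically independent. With this in hand, $\C(x,\kappa)$ is a rational function field in two variables, and the nonvanishing of $q:=\kappa^2-x^2\kappa+x^2$ is automatic because it is a nonzero polynomial in independent variables. To avoid circularity, the degree bound should be organized as follows. If $q=0$ in $K$, then the displayed relation forces $\kappa^2(\kappa-x^2+2)=0$, so $\kappa=x^2-2$ and then $q=0$ gives a polynomial relation in $x$ alone, which contradicts $x$ being transcendental ($x$ is a coordinate on a surface that is not contained in any plane $x=c$). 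So $q\neq 0$ and $y^2=\kappa^2(\kappa-x^2+2)/q$.

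For part (1), which is the main step, it remains to show that $y\notin\C(x,\kappa)$. This is equivalent to showing that
\begin{equation*}
\frac{\kappa^2(\kappa-x^2+2)}{\kappa^2-x^2\kappa+x^2}
\end{equation*}
is not a square in the rational function field $\C(x,\kappa)$, where $x$ and $\kappa$ are now known to be independent. Since $\kappa^2$ is a square, this reduces to showing that $(\kappa-x^2+2)(\kappa^2-x^2\kappa+x^2)$ is not a square in $\C(x,\kappa)$. The factor $\kappa-x^2+2$ is irreducible and linear in $\kappa$, and it does not divide the second factor: substituting $\kappa=x^2-2$ gives $(x^2-2)^2-x^2(x^2-2)+x^2=4-x^2\neq 0$. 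Hence $\kappa-x^2+2$ appears to odd order in the product, so the product is not a square. This gives $[K:\C(x,\kappa)]=2$. The point that needs the most care is keeping the logical order correct: first establish algebraicity of $K$ over $\C(x,\kappa)$, then deduce independence of $x$ and $\kappa$, and only then use unique factorization in $\C[x,\kappa]$ for the non-square argument.
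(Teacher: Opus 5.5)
Your proof is correct, but it handles the key step differently from the paper.

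\emph{The paper's route.} It divides by $x$ rather than $\kappa$, writing $y=\kappa z/x$. This makes $z$ the generator, with $z^2=x^2(\kappa-x^2+2)/(\kappa^2-x^2\kappa+x^2)\in\C(x,\kappa)$. To show $z\notin\C(x,\kappa)$ it uses the involution $\tau(x,y,z)=(x,-y,-z)$, which preserves $F$ up to sign, fixes $x$ and $\kappa$, and sends $z\mapsto -z$. Part (2) is then read off from the finiteness of the extension.

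\emph{Your route.} You make $y$ the generator and show the radicand is not a square using unique factorization in $\C[x,\kappa]$. This forces you to prove algebraic independence first, which is the reordering you flag.

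\emph{What each buys.} The paper's argument is shorter and needs no factorization. It also produces the nontrivial element of $\operatorname{Gal}(K/\C(x,\kappa))$, which is reused in Theorem \ref{etale locus of phi} to identify $\phi$ with the quotient by $\tau$. Your argument explicitly verifies the nonvanishing of the denominators $\kappa$ and $q=\kappa^2-x^2\kappa+x^2$; the paper divides by $q$, and by $x$, without comment. It also presents $K$ concretely as $\C(x,\kappa)$ with a square root of $(\kappa-x^2+2)\,q$ adjoined. The odd-order factor $\kappa-x^2+2$ is exactly the trace of the fixed locus $y=z=0$ of $\tau$, anticipating the branch locus found later in the paper.

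\emph{Presentation.} In a write-up, put your proof that $q\neq 0$ (via $\kappa=x^2-2\Rightarrow q=4-x^2$) before the sentence saying part (2) \emph{follows immediately}. Also drop the remark that $q\neq0$ is \emph{automatic}, since that is the circular version of the argument.
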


\begin{proof}
    We first determine the degree of the function field $K$ over
$\C(x,\kappa)$. Since $z\kappa=xy$ on $X_0(M)$, we have, in the function field $K$, $y=\frac{\kappa z}{x}.$  Substituting $y=\frac{\kappa z}{x}$ into $\kappa=x^2+y^2+z^2-xyz-2$ and collecting the terms gives
$$
z^2=\frac{x^2(\kappa-x^2+2)}{\kappa^2-x^2\kappa+x^2} \in\C(x,\kappa). 
$$
Moreover, from $y=\frac{\kappa z}{x}.$ It follows that $K=\C(x,\kappa)(z),$ and hence $[K:\C(x,\kappa)]\leq 2.$

We now show that the extension has degree exactly $2$. Consider the involution map $\tau(x,y,z)=(x,-y,-z)$. $X_0(M)$ is defined as the vanishing locus of $F(x,y,z)=z^3-xyz^2+(x^2+y^2-2)z-xy$. Since $F(x,-y,-z)=-F(x,y,z)$, $X_0(M)$ is preserved by $\tau$ and hence $\tau$ induces an automorphism $\tau_{*}$ on $K$. Notice that $\tau_{*}(x)=x$ and $\tau_{*}(\kappa)=\kappa$. Therefore, $\tau_{*}$ fixes $\C(x,\kappa) \subset K$. On the other hand, $\tau_{*}(z)=-z\neq z$. So $z \notin \C(x,\kappa)$ and $[K:\C(x,\kappa)] = 2.$

In particular, $K$ is a finite algebraic extension of $\C(x,\kappa)$. Since $X_{0}(M)$ is an algebraic surface, $\trdeg_{\C}K=2.$ It follows that $\trdeg_{\C}\C(x,\kappa)=2.$ Thus $x$ and $\kappa$ are algebraically independent over $\C$.
\end{proof}

\begin{lemma}\label{C(m,l,n)=C(x,kappa)}
$\C(m,\ell,n)=\C(x,\kappa).$
\end{lemma}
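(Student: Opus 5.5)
We prove the two inclusions separately, working inside the function field $K$ and using the explicit formulas of Lemma \ref{trace functions of n and l}.

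\textbf{The inclusion $\C(x,\kappa)\subseteq\C(m,\ell,n)$.} We have $x=m$ directly. The formula for $\ell$ gives
$$
\ell=\kappa^2-y^2\kappa+2y^2-2=\kappa^2-2-y^2(\kappa-2),
$$
so $y^2$ can be recovered once $\kappa$ is known. The aim is therefore to show $\kappa\in\C(m,\ell,n)$.

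From Lemma \ref{quadratic extension of K over C(x,kappa)} we know
$$
z^2=\frac{x^2(\kappa-x^2+2)}{\kappa^2-x^2\kappa+x^2},
$$
and $y^2=\kappa^2z^2/x^2$, so $y^2\in\C(x,\kappa)$. Substituting into the expression for $\ell$ gives $\ell$ as an explicit rational function $R(x,\kappa)$. For $n$, expand
$$
n=(x-yz)\kappa-x+2yz=x\kappa-x-yz(\kappa-2),
$$
where $yz=\kappa z^2/x\in\C(x,\kappa)$. So $n$ is also explicitly a rational function $S(x,\kappa)$.

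This already shows $\C(m,\ell,n)\subseteq\C(x,\kappa)$. It is consistent with the involution $\tau$ of Lemma \ref{quadratic extension of K over C(x,kappa)}, which fixes $m,\ell,n$ because it fixes $x$, $y^2$, $yz$ and $\kappa$.

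\textbf{The reverse inclusion $\kappa\in\C(m,\ell,n)$.} This is the main step. I would take one of two routes.

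The first route is a direct elimination. Write $N=n-x\kappa+x$, so that $N=-yz(\kappa-2)$. Then
$$
N^2=y^2z^2(\kappa-2)^2,
\qquad
\ell-\kappa^2+2=-y^2(\kappa-2).
$$
Using $y^2z^2=\kappa z^2\cdot y^2/x\cdot(z/\ldots)$, or more cleanly the relations $z\kappa=xy$ and the quadratic relation for $z^2$, one obtains polynomial relations among $m$, $n$, $\ell$ and $\kappa$. The goal is to find two such relations whose resultant-type combination is linear in $\kappa$, which exhibits $\kappa$ as a ratio of polynomials in $m,\ell,n$.

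The second route, which I would try first if the elimination gets messy, is a degree count. By Lemma \ref{quadratic extension of K over C(x,kappa)} we have $[K:\C(x,\kappa)]=2$. It therefore suffices to show $[K:\C(m,\ell,n)]\le 2$, i.e.\ that a generic fiber of $\phi$ has at most $2$ points. Given generic $(m,\ell,n)$ on $Y$, we have $x=m$. The equation $\ell=R(m,\kappa)$ has finitely many solutions $\kappa$, and $n=S(m,\kappa)$ cuts these down. One then checks, for example by specializing $m$ to a convenient value and computing, that generically only one $\kappa$ survives. Since $z$ is determined up to sign by $(x,\kappa)$ and $y=\kappa z/x$, the fiber then has exactly $2$ points. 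This gives $[\C(x,\kappa):\C(m,\ell,n)]=1$.

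The main obstacle is exhibiting $\kappa$ rationally: either producing the explicit formula, or verifying that the generic fiber over $Y$ has exactly one value of $\kappa$. I expect the cleanest version to be an identity of the form $\kappa=P(m,\ell,n)/Q(m,\ell,n)$ found by computer algebra and then verified by substituting the formulas for $m$, $\ell$ and $n$ from Lemma \ref{trace functions of n and l}.
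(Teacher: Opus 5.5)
Your inclusion $\C(m,\ell,n)\subseteq\C(x,\kappa)$ is correct and matches the paper: you substitute $y=\kappa z/x$ and the formula for $z^2$. Note that the paragraph you headed ``$\C(x,\kappa)\subseteq\C(m,\ell,n)$'' actually proves this opposite inclusion.

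\textbf{The gap.} The reverse inclusion $\kappa\in\C(m,\ell,n)$ is the real content of the lemma, and your proposal only sketches it as a plan. Route 1 never produces the needed relation; the displayed expression for $y^2z^2$ is left unfinished. Route 2 defers to an unspecified computation. Its specialization step would also need justification. The map $\phi$ is not finite (over $E_1\cup E_2$ its fibers are even empty), so solutions can disappear at special values, e.g.\ when a leading coefficient in $\kappa$ vanishes. Seeing a single surviving $\kappa$ at a convenient value of $m$ therefore does not by itself bound the generic count. Making Route 2 rigorous amounts to showing that two quadratics in $\kappa$ over $\C(m,\ell,n)$ are not proportional, which is Route 1 again.

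\textbf{What is missing} is one explicit relation, linear in $\kappa$ with coefficients in $\C[m,\ell,n]$, whose coefficient is not the zero function on $X_0(M)$. The paper sets $u=m^2$, $t=n/m$ and rewrites the formulas for $n$ and $\ell$ as
\[
(t+1)\kappa^2-(tu+4)\kappa+u(t+1)=0,\qquad (\ell+u-2)\kappa^2-u(\ell+2)\kappa+u(\ell+2)=0.
\]
It then eliminates $\kappa^2$ and checks at $(1,0,0)\in X_0(M)$ that the coefficient of $\kappa$ is nonzero. After cancelling a factor $4-m^2$, this is just $\kappa\,(2-\ell+mn)=m(m+n)$.

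Your own Route 1 setup yields this identity directly:
\begin{itemize}
\item The definition of $\kappa$ gives $y^2-xyz=\kappa+2-x^2-z^2$, hence $2-\ell+xn=x^2+(2-\kappa)z^2$.
\item The relation $xy=z\kappa$ gives $xN=\kappa(2-\kappa)z^2$, hence $x(x+n)=x^2\kappa+xN=\kappa\bigl(x^2+(2-\kappa)z^2\bigr)$.
\end{itemize}
Since $2-\ell+mn$ equals $1$ at $(1,0,0)$, it is nonzero in $K$. So $\kappa=m(m+n)/(2-\ell+mn)\in\C(m,\ell,n)$, which completes the proof.
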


\begin{proof}
from lemma \ref{trace functions of n and l}, we can write $n,\ell \in \C(x,y,z)$ as $n(x,y,z)=(x-yz)\kappa-x+2yz$ and $\ell(x,y,z)=\kappa^2-y^2\kappa+2y^2-2$. We now rewrite $n,\ell $ in terms of $x$ and $\kappa$, when view them as elements in the function field of $X_0(M)$. Notice that along $X_0(M)$, we have $y=\frac{\kappa z}{x},~ z^2=\frac{x^2(\kappa-x^2+2)}{\kappa^2-x^2\kappa+x^2}$. Substituting these identities into $n(x,y,z)$ and $\ell(x,y,z)$, we obtain
\begin{equation}\label{n l in terms of x kappa}
    n=x\,\frac{-\kappa^2+4\kappa-x^2}{\kappa^2-x^2\kappa+x^2},\qquad \ell=\frac{(2-x^2)\kappa^2+2x^2\kappa-2x^2}{\kappa^2-x^2\kappa+x^2}.
\end{equation}
Since $m=x,$ it follows that $m,\ell,n\in \C(x,\kappa).$ Therefore, $\C(m,\ell,n)\subseteq \C(x,\kappa).$

We next prove the reverse inclusion $\C(x,\kappa)\subseteq \C(m,\ell,n).$ Since $m=x,$ we already have $x\in \C(m,\ell,n).$ Set $u=x^2,~t=\frac{n}{x}.$ Since $x=m$, both $u$ and $t$ belong to $\C(m,\ell,n)$. Substituting $u,t$ into $n(x,\kappa)$ of \eqref{n l in terms of x kappa}, we obtain
$$
(t+1)\kappa^2-(tu+4)\kappa+u(t+1)=0.
$$
Similarly, Substituting $u,t$ into $\ell(x,\kappa)$ of \eqref{n l in terms of x kappa}, we obtain
$$
(\ell+u-2)\kappa^2-u(\ell+2)\kappa+u(\ell+2)=0.
$$
Combining the two equations above gives 
$$
\kappa=\frac{u(t+1)(4-u)}{u(t+1)(\ell+2)-(\ell+u-2)(tu+4)}.
$$

The denominator in the equation above is nonzero since its evaluation at the point $(1,0,0) \in X_0(M)$ is not $0$. It follows that $\kappa\in\C(m,\ell,n).$ Since $x=m$, we conclude that $\C(x,\kappa)\subseteq \C(m,\ell,n).$ Combining this with the reverse inclusion proved above gives $\C(m,\ell,n)=\C(x,\kappa),$ as desired.
\end{proof}

We are now ready to prove Theorem \ref{restriction morphism is dominant and degree 2}. 

\begin{proof}[proof of Theorem \ref{restriction morphism is dominant and degree 2}]
We first prove that $\phi:X\longrightarrow Y$ is dominant. The function field generated by the coordinate functions of the image of $\phi$ is $\C(m,\ell,n).$ By the previous section, $\C(m,\ell,n)=\C(x,\kappa).$ Since $x$ and $\kappa$ are algebraically independent, $\trdeg_{\C}\C(m,\ell,n)=2.$ Therefore, the Zariski closure of the image of $\phi$ has dimension
$$
\dim\overline{\phi(X)}=\trdeg_{\C}\C(m,\ell,n)=2.
$$
On the other hand, $\phi(X)\subseteq Y,$ and $Y=X(T)$ is the irreducible affine surface $Y=V(m^2+\ell^2+n^2-m\ell n-4).$ Thus $\overline{\phi(X)}\subseteq Y.$ Since $Y$ is irreducible of dimension $2$, every proper closed
subset of $Y$ has dimension strictly less than $2$. Hence $\overline{\phi(X)}=Y.$ Therefore, $\phi:X\longrightarrow Y \text{ is dominant}.$

Because $\phi$ is dominant, it induces an inclusion of function
fields $\phi^*:\C(Y)\hookrightarrow \C(X).$ Under the present coordinates, $\phi^*\C(Y)=\C(m,\ell,n).$ We have proved that $\C(m,\ell,n)=\C(x,\kappa).$ Therefore,
$$
\deg\phi=\left[\C(X):\phi^*\C(Y)\right]= \left[K:\C(m,\ell,n)\right]=\left[K:\C(x,\kappa)\right]=2.
$$
Thus, $\deg\phi=2.$
\end{proof}

The degree $\deg\phi=[\C(X):\phi^*\C(Y)]$ is the generic degree. From subsection \ref{prelimaries 3.1}, this implies that there exists a nonempty Zariski-open subset $U_{\mathrm{\acute et}}\subseteq Y=X(T)$ such that every point of $U_{\mathrm{\acute et}}$ has exactly two distinct preimages. However, this generic degree alone does not imply that $\phi$ is finite everywhere and special fibers with ramification or positive dimension may be included. Therefore, to obtain the correct intersection pattern, we need to find $U_{fin}$, $U_{flat}$, $U_{\mathrm{\acute et}}$ and the singular fibers of $\phi$, and these are precisely what the remainder of this subsection is devoted to.

We first determine the maximal open subset of the target over which the boundary restriction morphism is finite.

\begin{lemma}\label{finite locus}
$U_{fin}=Y\setminus\left(V_Y(m^2-4)\cup V_Y(\delta)\right)$, where $Y=X(T)$ and $\delta=2-\ell+mn\in \C[m,\ell,n]$.
\end{lemma}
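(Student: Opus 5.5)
The plan is to prove the two inclusions separately. First, $\phi$ is finite over the open set $Y\setminus V_Y\bigl((m^2-4)\delta\bigr)$; this gives $U_{fin}\supseteq Y\setminus(V_Y(m^2-4)\cup V_Y(\delta))$. Second, no open set containing a point of $V_Y(m^2-4)\cup V_Y(\delta)$ can have finite preimage. Throughout, the working tool is the description of $K=\C(X)$ in terms of $x,\kappa$ from Lemmas \ref{quadratic extension of K over C(x,kappa)} and \ref{C(m,l,n)=C(x,kappa)}, together with the explicit formulas \eqref{n l in terms of x kappa}. Since $A=\C[x,y,z]/(F)$ is a domain, any identity verified in $K$ between elements of a localization of $A$ holds in that localization.

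\textbf{Step 1: express $\delta$ and $\kappa$ through the boundary coordinates.} Put $D=\kappa^2-x^2\kappa+x^2$. Substituting \eqref{n l in terms of x kappa} into $\delta=2-\ell+mn$, I expect the numerator to collapse to $\delta=\frac{x^2(4-x^2)}{D}$. Next, clearing denominators in the formula for $n$ and adding the result to $\kappa^2-x^2\kappa=D-x^2$ should give $(4-x^2)\kappa=D(x+n)/x$. Combining these two identities yields
$$
\kappa=\frac{x(x+n)}{\delta}=\frac{m(m+n)}{\delta}.
$$
Feeding this into $z^2=x^2(\kappa-x^2+2)/D$ and into $y^2=\kappa^2z^2/x^2$ should give
$$
z^2=\frac{\delta(\kappa-m^2+2)}{4-m^2},\qquad y^2=\frac{(m+n)^2(\kappa-m^2+2)}{\delta(4-m^2)}.
$$

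\textbf{Step 2: finiteness on the open set.} Let $B'=\C[m,\ell,n]/(G)$ localized at $(m^2-4)\delta$. By Step 1, $\kappa$, $z^2$ and $y^2$ all lie in $B'$. Also $x=m$. Hence $x,y,z$ are integral over $B'$ (the last two via monic quadratics), so $A\otimes_B B'$ is a finite $B'$-module. This proves finiteness over $Y\setminus V_Y((m^2-4)\delta)$.

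\textbf{Step 3: failure of finiteness at the bad set.} I will use that a finite dominant morphism is closed, hence surjective, with finite fibers. It also helps to identify $V_Y(\delta)$ explicitly. On $Y$, setting $\ell=2+mn$ in $G$ gives $(m+n)^2$. Hence $V_Y(\delta)=\{(m,2-m^2,-m)\}$.

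For a point $y_0=(m_0,2-m_0^2,-m_0)$ of $V_Y(\delta)$, fix $x=m_0$ and let $\kappa\to\infty$ along $X$. By \eqref{n l in terms of x kappa}, $n\to -m_0$ and $\ell\to 2-m_0^2$, while the points of $X$ leave every compact set. So the preimage of a neighbourhood of $y_0$ is not proper, which contradicts finiteness (finite implies proper; equivalently, apply the valuative criterion along this curve).

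For $m_0=\pm2$, restrict to the curve $\{x=\pm2\}\subset X$. The formulas collapse to $n\equiv-x$ and $\ell\equiv-2$, so this whole curve maps to the single point $(\pm2,-2,\mp2)$. That point has a positive-dimensional fiber. Every other point of $V_Y(m\mp2)$ has empty fiber, which contradicts surjectivity of a finite map over any open set containing it.

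The main obstacle I expect is Step 1: the rational-function simplifications (that $\delta$ factors as $x^2(4-x^2)/D$, and that $\kappa$ becomes a polynomial over $\delta$) must actually come out as predicted. A secondary point needing care is the degenerate case $x=0$, where $y=\kappa z/x$ cannot be used directly. This is exactly why I argue through $y^2$ rather than $y$.
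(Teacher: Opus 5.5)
Your overall argument is sound, and Steps 1--2 are essentially the paper's finiteness proof. You use the same identities $\delta D=x^2(4-x^2)$, $\kappa=m(m+n)/\delta$ and $z^2=\delta(\kappa-m^2+2)/(4-m^2)$. The only difference there is that you obtain $y^2$ from $y=\kappa z/x$ in $K$, whereas the paper solves the $2\times2$ linear system in $y^2,yz$, whose determinant is $x^2-4$. Your version is legitimate: $A_s\subset K$, so $y^2=(m+n)^2(\kappa-m^2+2)/\bigl(\delta(4-m^2)\bigr)$ holds in $A_s$, and no division by $x$ survives.

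For $V_Y(\delta)$ you argue by non-properness. The paper instead uses $\delta D=x^2(4-x^2)$ to show that every point $(m,2-m^2,-m)$ with $m\notin\{0,\pm2\}$ has empty fiber, and then invokes surjectivity of a finite dominant map. Your route treats all points of $V_Y(\delta)$ uniformly, but two things should be justified:
\begin{itemize}
\item Points with $x=m_0$ and $\kappa=k$ arbitrarily large exist. For $m_0\neq0$, take $z^2=m_0^2(k-m_0^2+2)/(k^2-m_0^2k+m_0^2)$ and $y=kz/m_0$; for $m_0=0$, use the points $(0,y,0)$.
\item Finiteness forbids escape to infinity. For example, $\kappa$ would satisfy a monic equation whose coefficients are regular, hence bounded, near $y_0$.
\end{itemize}
The identity $\delta D=x^2(4-x^2)$ you already derived in Step 1 would give the paper's shorter argument directly.

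There is one false claim, in your treatment of $V_Y(m\mp2)$. The formulas \eqref{n l in terms of x kappa} are valid only where $D\neq0$, and on $\{x=\pm2\}$ one has $D=(\kappa-2)^2$. Since $F(\pm2,y,z)=(z\mp y)\bigl(z(z\mp y)+2\bigr)$, the section $X\cap\{x=\pm2\}$ has a second component $\{z=\pm y\}$ (the lines $R_{++}$, $R_{-+}$). On that component $\kappa\equiv2$ and $D\equiv0$, and the polynomial formulas of Lemma \ref{trace functions of n and l} show that it maps to $(\pm2,2,\pm2)$. So the plane section does not map to a single point, and $(\pm2,2,\pm2)$ does not have empty fiber.

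Your conclusion survives, because $\phi(X\cap\{x=\pm2\})=\{(\pm2,2,\pm2),(\pm2,-2,\mp2)\}$ is still finite. Hence every nonempty open subset of the irreducible curve $V_Y(m\mp2)=\{(\pm2,t,\pm t)\}$ still contains points with empty fiber. You must, however, account for this component explicitly rather than applying \eqref{n l in terms of x kappa} on the whole plane section.
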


\begin{proof}
We first explain why $V_{Y}(m^2-4)$ must be removed from the finite locus. Since $m=x$, the fiber over a point with $m=2$ is contained in the plane section of $X_0(M)$ with $x=2$. Substituting $x=2$ into $F$, we obtain $F(2,y,z)=(z-y)\bigl(z(z-y)+2\bigr).$ Thus $X_0(M)\cap\{x=2\}=C_{1}\cup C_{2}$, where $C_{1}=V(x-2,y-z)$ and $C_{2}=V(x-2,z(z-y)+2)$. On the other hand, the equation of $Y$ restricted to $m=2$ is $G(2,\ell,n)=(\ell-n)^2.$ Thus $Y\cap\{m=2\}=\{(2,t,t):t\in\C\}.$

On $C_{1}$, we have $y=z$, and hence $\kappa=2.$ Substituting these values into the expressions of $n,\ell$ in lemma \ref{trace functions of n and l} gives $(m,\ell,n)=(2,2,2).$ On $C_{2}$, we have $z(z-y)+2=0$. Let $B=\kappa^2-x^2\kappa+x^2$. It is straightforward to check that on $C_2$, we must have $B \neq 0$. Therefore, when we restrict $\phi$ to $C_2$, $n(x,y,z)$ and $\ell(x,y,z)$ take the form in \eqref{n l in terms of x kappa}. Since $\phi(C_2) \subset Y\cap\{m=2\}$, equating $n =\ell$ in \eqref{n l in terms of x kappa} with $x=2$ gives $\kappa=1$ and $(m,\ell,n)=(2,-2,-2).$ Therefore, 
$$
\phi(X_0(M))\cap\{m=2\}=\{(2,2,2),(2,-2,-2)\}.
$$
Since a finite dominant morphism over an irreducible open subset has closed image containing the generic point, no open neighborhood of such a point can support a finite degree-two restriction of $\phi$. Hence the entire divisor $Y\cap V(m-2)$ must be removed from the finite locus. Similarly, $V_{Y}(m+2)$ must also be removed from the finite locus.

We then explain why $V_Y(\delta)$ must be excluded from the finite locus. The equation $\delta=2-\ell+mn=0$ cuts out a curve in $Y$. Indeed, substituting $\ell=2+mn$ into $m^2+\ell^2+n^2-m\ell n-4=0$ gives $(m+n)^2=0.$ Thus, set-theoretically,
$$
V_Y(\delta)=\left\{(m,\ell,n):n=-m,\quad\ell=2-m^2\right\}.
$$

Now suppose that $\xi=(m,\ell,n)\in V_Y(\delta)$ has a preimage $P=(x,y,z)\in X_{0}(M)$ under $\phi$, i.e. $\phi(x,y,z)=(m,\ell,n)$. From lemma \ref{trace functions of n and l}, the expressions for $m,\ell,n$ in terms of $x,y,z$ under $\phi$ are
$$
m=x, \qquad n=(x-yz)\kappa-x+2yz, \qquad \ell=\kappa^2-y^2\kappa+2y^2-2.
$$
Using these expressions, it is not hard to compute that $\delta B=x^2(4-x^2)$, where $B=\kappa^2-x^2\kappa+x^2$. Since $\delta=0$ along $V_{Y}(\delta)$, a point of $V_Y(\delta)$ can have a preimage only if $m=0,~2,~\text{or}-2.$ Consequently, every point $(m,2-m^2,-m)$ with $m\notin\{0,\pm2\}$ has empty fiber. For the similar reason as above, the entire divisor $V_Y(\delta)$ must be removed from the maximal finite locus.

%Since these points form a dense open subset of the irreducible curve $V_Y(\delta)$, no point of $V_Y(\delta)$ can belong to a target open subset over which $\phi$ is finite and dominant. Indeed, a finite dominant morphism to an irreducible variety is surjective.

We now prove $U=U_{fin}$ where $U=Y\setminus\left(V(m^2-4)\cup V(\delta)\right)$. From the arguments above, it suffices to prove that $\phi:\phi^{-1}(U) \longrightarrow U$ is finite. Let $R$ and $A$ be the coordinate rings of $X$ and $Y$ respectively,i.e. $X=\operatorname{spec}(R), Y=\operatorname{spec}(A)$(Note that we are abusing notations here by identifying an affine scheme with the underlying algebraic variety that corresponds to its closed points). The boundary restriction map $\phi: X \longrightarrow Y$ induces a ring homomorphism $\phi^{*}: A \longrightarrow R$, and hence $R$ can be viewed as an A-module via $ar \mapsto \phi^{*}(a)r$. To simplify the notation, we identify $\phi^{*}(a) \in R$ with $a \in A$. Set $s=(m^2-4)\delta$ and let $A_s,R_s$ be the localizations at $s$ of $A,R$ respectively. Thus $\operatorname{Spec}A_{s}=D_Y\bigl((m^2-4)\delta\bigr)=U$ and $\operatorname{spec}(R_{s})=\operatorname{spec}(R\otimes_A A_{s})=\phi^{-1}(U)$. To prove that $\phi:\phi^{-1}(U) \longrightarrow U$ is finite, it suffices to prove that $R_s$ is a finite $A_s$-module. We emphasize that for the following discussion, $x,y,z,m,\ell,n$ should be viewed as elements in $A_s$ or $R_s$, rather than coordinates in $X$ or $Y$. Moreover, $\phi^{*}(m),\phi^{*}(\ell), \phi^{*}(n)$ are given as in lemma \ref{trace functions of n and l}.

Since $x=m$ already lies in $A_{s}$, the localized coordinate ring $R_{s}$ is generated as an $A_{s}$-algebra by $y$ and $z$, i.e. $R_{s}=A_{s}[y,z]$. Then $R_s$ is a finite $A_s$-module if and only if $y$ and $z$ are integral over $A_s$, which we now prove. We first show that $\kappa \in A_s$. Recall that $\delta B=x^2(4-x^2).$ It can also be computed that $(x+n)B=x(4-x^2)\kappa$. Therefore $\kappa=\frac{x(x+n)}{\delta} \in A_s$, since $\delta$ is invertible in $A_s$.

We now prove that $z$ is integral over $A_s$. Recall that $Bz^2=x^2(\kappa-x^2+2)$ and $\delta B=x^2(4-x^2).$ Since $m^2-4=x^2-4$ is invertible in $A_s$, it can be computed that 
$$
z^2=\frac{\delta(\kappa-x^2+2)}{4-x^2} \in A_s.
$$
Thus $z$ satisfies the monic polynomial $T^2-c\in A_{s}[T]$, and hence $z$ is integral over $A_{s}$. 

We then prove $y$ is integral over $A_s$. Using the fact that $z\kappa = xy$, the formulas for $n$ and $\ell$ can be rewritten as
$$
-xy^2+2yz=n-x\kappa+x,\qquad(2-x^2)y^2+xyz=\ell+(2-x^2)\kappa+2.
$$
We can regard these as two linear equations in the two quantities $y^2$ and $yz$. The coefficient matrix is
$$
\begin{pmatrix}
-x & 2\\
2-x^2 & x
\end{pmatrix},
$$
whose determinant is $x^2-4.$ Since $m^2-4$ is invertible in $A_{s}$, this matrix is invertible after localization. Solving for $y^2$ gives
$$
y^2=\frac{xn+(x^2-4)\kappa+x^2-2\ell-4}{x^2-4} \in A_s.
$$
Thus $y$ also satisfies a quadratic monic polynomial in $A_{s}[T],$ and therefore $y$ is integral over $A_{s}$. This completes the proof.
\end{proof}

We then find the finite flat locus of $\phi$. 

\begin{lemma}\label{flat locus}
$U_{flat}=U_{fin}=Y\setminus\left(V_Y(m^2-4)\cup V_Y(\delta)\right).$
\end{lemma}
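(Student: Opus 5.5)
Since a finite flat morphism is in particular finite, we have $U_{flat}\subseteq U_{fin}$ by maximality of $U_{fin}$ (Definition \ref{finite,flat,etale locus}). It therefore suffices to show that the restriction $\phi:\phi^{-1}(U_{fin})\longrightarrow U_{fin}$ is flat. Lemma \ref{finite locus} already gives finiteness, so the plan is to invoke Miracle Flatness (Theorem \ref{miracle flatness}) with $U=U_{fin}=\operatorname{Spec}(A_s)$ and $\phi^{-1}(U)=\operatorname{Spec}(R_s)$, where $s=(m^2-4)\delta$ as in the proof of Lemma \ref{finite locus}. Three hypotheses have to be checked:
\begin{enumerate}
\item $U$ is smooth;
\item $\phi^{-1}(U)$ is Cohen--Macaulay;
\item both have dimension $2$.
\end{enumerate}

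\textbf{Dimension.} $\phi^{-1}(U)$ is a nonempty open subset of the irreducible surface $X_0(M)$ and $U$ is a nonempty open subset of the irreducible surface $Y$. Both are nonempty because $\phi$ is dominant (Theorem \ref{restriction morphism is dominant and degree 2}) and $U$ is dense. Hence both have dimension $2$.

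\textbf{Cohen--Macaulay.} $X_0(M)=V(F)\subset\A^3$ is a hypersurface, so by the remark following Theorem \ref{miracle flatness} it is Cohen--Macaulay. The Cohen--Macaulay property is local, since it is defined via the local rings $A_{\mathfrak p}$. Localization at $s$ does not change these local rings at primes of $R_s$, so $R_s$ is Cohen--Macaulay.

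\textbf{Smoothness (the main step).} We use the Jacobian criterion on $G=m^2+\ell^2+n^2-m\ell n-4$. Its gradient is
$$
\nabla G=(2m-\ell n,\;2\ell-mn,\;2n-m\ell).
$$
We solve $\nabla G=0$ together with $G=0$. If $m\neq\pm2$, the last two equations form a linear system in $(\ell,n)$ with determinant $4-m^2\neq0$, forcing $\ell=n=0$. Then $G=0$ gives $m^2=4$, a contradiction. So every singular point of $Y$ has $m=\pm2$. (These are in fact the points $(\pm2,\pm2,\pm2)$ coming from the central characters.) All of them lie in $V_Y(m^2-4)$, which has been removed, so $U$ is smooth.

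With all three hypotheses verified, Theorem \ref{miracle flatness} shows that $\phi^{-1}(U_{fin})\to U_{fin}$ is finite flat. Hence $U_{fin}\subseteq U_{flat}$, and combined with the opening inclusion this gives equality.

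I expect no real obstacle beyond the singular-locus computation. The only subtle point is to make sure that Miracle Flatness is applied to the restricted morphism over the open set, rather than to $\phi$ globally.
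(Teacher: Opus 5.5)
Your proposal is correct and follows essentially the same route as the paper. The paper also applies Miracle Flatness to the restriction over $U_{fin}$, with smoothness coming from the Jacobian computation that puts every singular point of $Y$ on $m^2=4$, and Cohen--Macaulayness coming from $X_0(M)$ being a hypersurface; you additionally spell out the inclusion $U_{flat}\subseteq U_{fin}$ and the dimension check, which the paper leaves implicit.
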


\begin{proof}
The target surface is
$Y=V(G),~G=m^2+\ell^2+n^2-m\ell n-4.$ Its partial derivatives are 
$$
G_m=2m-\ell n,\qquad G_\ell=2\ell-mn,\qquad G_n=2n-m\ell.
$$
Thus a singular point must satisfy $2m=\ell n,~2\ell=mn,~2n=m\ell.$ Together with $G=0$, these equations give $\Sing(Y)=\left\{(2,2,2),(2,-2,-2),(-2,2,-2),(-2,-2,2)\right\}.$

All singular points of $Y$ lie on the divisor $m^2=4$. Consequently, $U_{fin}$ is a smooth affine scheme. The source $X$ is a hypersurface, and hence Cohen--Macaulay. Therefore $\phi^{-1}(U_{fin})$ is also Cohen--Macaulay. The restricted morphism $\phi: \phi^{-1}(U_{fin})\longrightarrow U_{fin}$ is finite, the target is smooth, and the source and target have the same pure dimension. Thus, by miracle flatness(Theorem \ref{miracle flatness}), this morphism is flat.
\end{proof}

Finally, we determine the finite \'{e}tale locus of $\phi$. 

\begin{theorem}\label{etale locus of phi}
$U_{\mathrm{\acute et}}=X(T)\setminus\left(E_1\cup E_2\cup E_3\right).$
where $E_1=V(m^2-4), E_2=V(2-\ell+mn)$ and $E_3=V(\ell-m^4+4m^2-2,n-m^3+3m)$.
\end{theorem}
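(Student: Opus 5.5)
Since $U_{\mathrm{\acute et}}\subseteq U_{flat}=U_{fin}=Y\setminus(E_1\cup E_2)$ by Lemmas \ref{finite locus} and \ref{flat locus}, it suffices to work over $U_{fin}$. There $\phi$ is finite flat of degree $2$ (Theorem \ref{restriction morphism is dominant and degree 2}), so every fiber has total length $L(y)=2$. By Proposition \ref{characterization of morphisms using fiber}(3), a point $y\in U_{fin}$ lies in the étale locus exactly when its fiber consists of two distinct reduced points, that is, when the fiber is not a single point of multiplicity $2$. The plan is therefore to compute the branch locus of the double cover $\phi^{-1}(U_{fin})\to U_{fin}$ and to show that it equals $E_3\cap U_{fin}$.

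To find the branch locus, I would use the explicit algebra structure from the proof of Lemma \ref{finite locus}. Over $A_s$, we have $\kappa\in A_s$ and $z^2=c:=\delta(\kappa-x^2+2)/(4-x^2)\in A_s$. Since $y=\kappa z/x$ and $y^2\in A_s$, I expect $R_s=A_s[z]/(z^2-c)$ as a free $A_s$-module with basis $1,z$. The case $x=0$ needs separate care, because there $y$ should be expressible through the linear system in $y^2$ and $yz$ and the relation $z\kappa=xy$. The discriminant of this rank-two algebra is $4c$, so the fiber over $y$ is nonreduced exactly when $c(y)=0$. This means $\kappa=x^2-2$, since $\delta\neq 0$ on $U_{fin}$.

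Next I would identify the curve $\{\kappa=m^2-2\}$ inside $Y$. Substituting $\kappa=x^2-2$ into \eqref{n l in terms of x kappa}, or directly into $\kappa=x(x+n)/\delta$, should give $n=m^3-3m$ and $\ell=m^4-4m^2+2$. These are the formulas $\operatorname{tr}(\mu^3)$ and $\operatorname{tr}(\mu^4)$, and they cut out exactly $E_3$. Conversely, any point of $E_3\cap U_{fin}$ satisfies $\kappa(y)=m^2-2$, so $c=0$ there and the fiber is the single point $z=0$ with multiplicity $2$. Hence $E_3$ must be removed, and away from $E_1\cup E_2\cup E_3$ the fibers have two distinct reduced points.

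The main obstacle is showing that $R_s$ really is $A_s[z]/(z^2-c)$, i.e.\ that $1,z$ generate and that $y$ adds nothing, uniformly including at points with $m=0$. If this fails, I would instead compute the Jacobian criterion for $\phi$ directly. That means computing the ramification divisor on $X$, given by the vanishing of the $2\times 2$ minors of $d\phi$ restricted to $X$ (equivalently, fixed points of the deck involution $\tau$, which lie at $y=z=0$), and then pushing it forward. This fallback is consistent with the guess above, since $\tau$-fixed points with $z=0$ and $y=0$ on $X$ satisfy $\kappa=x^2-2$, matching $E_3$.
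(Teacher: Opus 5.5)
Your argument is correct, and it takes a genuinely different route from the paper. The paper never writes down the $A_s$-algebra structure of $R_s$. Instead it proves abstractly that $A_s=R_s^{\langle\tau\rangle}$: finiteness makes the invariants integral over $A_s$, the function fields agree because $\mathbb{C}(X(T))=\mathbb{C}(X_0(M))^{\langle\tau\rangle}$, and $A_s$ is integrally closed because $U_{fin}$ is smooth. So over $U_{fin}$ the map $\phi$ is the quotient by $\tau$. The paper then uses the general fact that such a quotient is \'{e}tale exactly off the fixed locus $V_X(y,z)$, and pushes that locus forward to $E_3$. Your fallback is essentially this argument; the word ``equivalently'' in it is precisely what the paper's normality step supplies.

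Your main route replaces all of this with an explicit local model. Once $R_s=A_s\oplus A_s z$ with $z^2=c$, you get the following for free:
\begin{itemize}
\item flatness, so miracle flatness is not needed;
\item $R_s^{\langle\tau\rangle}=A_s$, visible directly;
\item each fiber is $\mathbb{C}[z]/(z^2-c(P))$;
\item the multiplicity-$2$ claim used later in Lemma \ref{intersection multiplicity computation}.
\end{itemize}
The paper's approach is coordinate-light and would still work where no single generator of the anti-invariant part is available. Yours is more concrete, but it hinges on exhibiting that generator.

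The step you flagged does close.
\begin{itemize}
\item At a point of $U_{fin}$ with $m=0$, the identity $\kappa\delta=m(m+n)$ in $A_s$ forces $\kappa=0$, hence $c=\delta/2\neq 0$. So $m$ and $c$ have no common zero on $U_{fin}$, and the Nullstellensatz gives $\alpha c+\beta m=1$ with $\alpha,\beta\in A_s$.
\item The same $2\times 2$ system (determinant $x^2-4$) that yields $y^2\in A_s$ also yields $e:=yz\in A_s$ by Cramer's rule.
\item Using $my=\kappa z$, you get $y=y(\alpha c+\beta m)=\alpha(yz)z+\beta(my)=(\alpha e+\beta\kappa)z\in A_s z$.
\item Since $R_s=A_s[y,z]$ and $y^2,yz,z^2\in A_s$, this gives $R_s=A_s+A_s z$. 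The sum is direct because $z\notin\mathbb{C}(X(T))$.
\end{itemize}
For identifying $V(c)$ with $E_3\cap U_{fin}$, your substitution into \eqref{n l in terms of x kappa} is legitimate pointwise. At any point with $\kappa=m^2-2$ one has $\kappa^2-m^2\kappa+m^2=4-m^2\neq 0$, so the polynomial identities $nB=\cdots$ and $\ell B=\cdots$ may be divided by $B$ there.
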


\begin{proof}
Consider the involution $\tau:X_0(M)\longrightarrow X_0(M), ~\tau(x,y,z)=(x,-y,-z).$ The key of the proof is to show that $U_{fin}=U_{flat} \cong \phi^{-1}(U_{fin})/\langle \tau \rangle$, where the isomorphism is induced by $\phi$. Notice that the fibers of $E_1,E_2$ are computed in lemma \ref{finite locus} and they are indeed preserved by $\tau$. Therefore $\tau$ restricts to an involution on $\phi^{-1}(U_{fin})$. 

Let $L=\C(X_0(M)),K=\C(X(T))$ be the function fields of $X_0(M)$ and $X(T)$ respectively. Then $\tau,\phi$ induce automorphisms $\tau^{*}:L \longrightarrow L, ~\phi^{*}:K \longrightarrow L$ respectively. We first show that $K=L^{\langle\tau\rangle}$, where $L^{\langle\tau\rangle}$ is the fixed field of $L$ under $\tau^{*}$ and we identify $K$ with $\phi^{*}(K)$. Since $\kappa=x^2+y^2+z^2-xyz-2,$ we have $\kappa\circ\tau=\kappa.$ The formulas in lemma \ref{trace functions of n and l} also give $m\circ\tau=m,~\ell\circ\tau=\ell,~n\circ\tau=n.$ Therefore, $\phi\circ\tau=\phi.$ At the level of function fields, this implies $\tau^{*}\circ \phi^{*}=\phi^{*}$ and hence $\tau^{*}(a)=a ~\forall a\in K$. Therefore $K \subset L^{\langle\tau\rangle}$. By Theorem \ref{restriction morphism is dominant and degree 2}, $[L:K]=2$ and hence either $L^{\langle\tau\rangle}=K$ or $L^{\langle\tau\rangle}=L$. Clearly $y \notin L^{\langle\tau\rangle}$. So we must have $L^{\langle\tau\rangle}=K$. 

Now we let $X=\phi^{-1}(U_{fin}), Y=U_{fin}$ and let $R,A$ be the coordinate rings of $X,Y$ respectively. The involution $\tau: X \to X$ induces an automorphism $\tau^{*}: R \longrightarrow R$ and $X/\langle\tau\rangle=\operatorname{spec}(R^{\langle\tau\rangle})$(Note that we are using $\tau^{*},\phi^{*}$ to denote the induced maps on both the function fields and the coordinate rings). Since $\phi\circ\tau=\phi$, $\phi:X \longrightarrow Y$ descents to a map $\bar{\phi}: X/\langle\tau\rangle \longrightarrow Y$. We again regard $A$ as a subring of $R$ via $\phi^{*}$ and identify $a \in A$ with $\phi^{*}(a) \in R$. Therefore, to show $X/\langle\tau\rangle \cong Y$ via $\bar{\phi}$, it suffices to show $A=R^{\langle\tau\rangle}$. 

Since $\phi \circ \tau=\phi$, this implies $\tau^{*}\circ \phi^{*}=\phi^{*}$ and $A \subset R^{\langle \tau \rangle}$. Since $\phi$ is a finite morphism when restricts to $X$, $R$ is a finite $A$-module and hence $R^{\langle \tau \rangle} \subset R$ is integral over $A$. Since the singular points of $X(T)$ as described in lemma \ref{flat locus} are excluded from $Y$, $Y$ is a smooth affine scheme. This implies $Y$ is normal and hence $A$ is integrally closed in $\operatorname{Frac}(A)$. From the arguments above, we have $\operatorname{Frac}(A)=K=L^{\langle \tau \rangle}=\operatorname{Frac}(R^{\langle \tau \rangle})$. Let $r\in R^{\langle \tau \rangle}$, then $r$ is integral over $A$ and $r \in L^{\langle \tau \rangle}=K$. Therefore $r \in A$ since $A$ is integrally closed. So $R^{\langle \tau \rangle} \subset A$ and $A=R^{\langle \tau \rangle}$. 

Now, we have $Y \cong X/\langle\tau\rangle$ via the descent of $\phi$, so the ramification locus of $\phi$ on $X$ agrees with the ramification locus of the quotient map by $\tau$. Since the characteristic is zero, the quotient map by $\tau$ is \'{e}tale exactly at points where the involution acts freely. Hence the ramification locus is the fixed locus of $\tau$, which is given by $R_{\mathrm{ram}}=V_X(y,z)$. On $R_{\mathrm{ram}}$, $\kappa=x^2-2,$ and $m=x,~ n=x^3-3x,~ \ell=x^4-4x^2+2$. The branch locus on $Y$ is the closure of the image of $R_{\mathrm{ram}}$ under $\phi$. Thus the branch locus is given by
$$
\mathcal B=V_Y\left(\ell-m^4+4m^2-2,\,n-m^3+3m\right).
$$
This equation, combining with the vanishing locus of $Y=U_{fin}$ given in lemma \ref{finite locus}, gives the desired result. 
\end{proof}

The morphism $\phi$ acts like finite covering map on $\phi^{-1}(U_{\mathrm{\acute et}})$. But in order to obtain the complete intersection pattern, we still need to find the fibers for points outside $U_{\mathrm{\acute et}}$. This is done in the following lemma.

\begin{lemma}[Singular fibers]\label{singular fibers}
Let $E=E_1\cup E_2\cup E_3$ be the non-\'{e}tale locus of $\phi$ on $X(T)$, as described in Theorem \ref{etale locus of phi}. Then 
\begin{equation}\label{pairwise intersection of E_i}
\begin{aligned}
&E_1 \cap E_2=\{ (2,-2,-2),(-2,-2,2)\}=\{s_{+-},s_{--}\},\\ 
&E_1 \cap E_3=\{(2,2,2),(-2,2,-2)\}=\{s_{++},s_{-+}\},\\
&E_2 \cap E_3=\{(0,2,0)\}=\{e_0\}.
\end{aligned}
\end{equation}
Let $\widetilde{\Sigma}$ be the collection of these five intersection points. Then the corresponding fibers of $\widetilde{\Sigma}$ are 
\begin{equation}\label{fiber of five singular points}
\begin{aligned}
&\phi^{-1}(s_{++})=R_{++}:=\{(2,y,y):y\in\mathbb C\},\\
&\phi^{-1}(s_{+-})=I_{+-}:=\left\{(2,y,z):(z-y)^2+y(z-y)+2=0\right\},\\
&\phi^{-1}(s_{-+})=R_{-+}:=\{(-2,y,-y):y\in\mathbb C\},\\
&\phi^{-1}(s_{--})=I_{--}:=\left\{(-2,y,z):(y+z)^2-y(y+z)+2=0\right\},\\
&\phi^{-1}(e_0)=J_0:=\{(0,y,0): y\in \C \}.
\end{aligned}
\end{equation}
The other fibers of $E$ can be described as follows: $\phi^{-1}(t)=\varnothing$ for any $t \in (E_1\cup E_2)\setminus \widetilde{\Sigma}$; $\phi^{-1}(t)$ consists of a single point for any $t \in E_3 \setminus \widetilde{\Sigma}$ and $\phi^{-1}(E_3)=R_{++} \cup R_{-+}\cup J_0 \cup D$ where $D=\{(x,0,0): x\in \C\}$.  
\end{lemma}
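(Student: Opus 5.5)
The plan is a direct computation in three stages, each reusing the explicit formulas of Lemma \ref{trace functions of n and l} and the identities obtained in the proofs of Lemma \ref{finite locus} and Theorem \ref{etale locus of phi}.

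\textbf{Stage 1: pairwise intersections of the $E_i$.} On $Y$ we use the set-theoretic descriptions already in hand:
\begin{itemize}
\item $E_1\cap Y=\{(2,t,t)\}\cup\{(-2,t,-t)\}$, by the factorization $G(\pm2,\ell,n)=(\ell\mp n)^2$;
\item $E_2=\{(m,2-m^2,-m)\}$;
\item $E_3=\{(m,m^4-4m^2+2,m^3-3m)\}$.
\end{itemize}
Intersecting $E_1$ with $E_2$ forces $m=\pm2$ and $\ell=-2$, which gives $s_{+-}$ and $s_{--}$. Intersecting $E_1$ with $E_3$ gives $\ell=2$, hence $s_{++}$ and $s_{-+}$. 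For $E_2\cap E_3$, equating $-m=m^3-3m$ gives $m\in\{0,\pm\sqrt2\}$. Then $2-m^2=m^4-4m^2+2$ forces $m^2(m^2-3)=0$, so only $m=0$ survives, giving $e_0$.

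\textbf{Stage 2: fibers over $E_1$ and $E_2$.} The fibers over $x=\pm2$ come from the plane sections computed in Lemma \ref{finite locus}.
\begin{itemize}
\item For $x=2$, the component $C_1=\{y=z\}$ maps to $(2,2,2)$, which gives $R_{++}$.
\item The component $C_2=\{z(z-y)+2=0\}$ maps to $(2,-2,-2)$. I would rewrite $z(z-y)+2$ as $(z-y)^2+y(z-y)+2$ to match $I_{+-}$.
\item For $x=-2$, use the symmetry $(x,y,z)\mapsto(-x,y,-z)$. This preserves $F$ and sends $m\mapsto -m$, $n\mapsto -n$, $\ell\mapsto\ell$, which I would check from the formulas. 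It yields $R_{-+}$ and $I_{--}$.
\end{itemize}
Hence points of $E_1$ other than $s_{\pm\pm}$ have empty fiber.

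For $E_2$ we use the identity $\delta B=x^2(4-x^2)$. A preimage of a point of $E_2$ must have $x\in\{0,\pm2\}$. The cases $x=\pm2$ were handled above. For $x=0$, $F(0,y,z)=z(z^2+y^2-2)$. On $z=0$ we get $\kappa=y^2-2$, $n=0$ and $\ell=(y^2-2)^2-y^2(y^2-2)+2y^2-2=2$, so $J_0$ maps to $e_0$. On $z^2+y^2=2$ we get $\kappa=0$, so $n=0$ and $\ell=2y^2-2$. That image lies on $E_2$ only when $\ell=2$, i.e.\ $y^2=2$, $z=0$, a point already in $J_0$.

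The point needing care is that the fiber over $e_0$ is exactly $J_0$ and not larger, and that $\phi^{-1}(s_{++})$ does not also pick up points of $C_2$; both follow from the images computed above.

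\textbf{Stage 3: fibers over $E_3$.} A point $(x,y,z)$ maps into $E_3$ iff $n=x^3-3x$ and $\ell=x^4-4x^2+2$. The ramification locus $D=V_X(y,z)$ lies in this preimage, but there may be further components. My approach is to solve the two equations on $X$. Away from $B=0$ I use the expressions \eqref{n l in terms of x kappa} in $(x,\kappa)$. Setting $n=x^3-3x$ gives a polynomial equation in $\kappa$ whose roots are $\kappa=x^2-2$ and possibly one extra root; I would check that the extra root is incompatible with the $\ell$-equation unless $x\in\{0,\pm2\}$. With $\kappa=x^2-2$, the formula $z^2=x^2(\kappa-x^2+2)/B$ gives $z=0$, hence $y=\kappa z/x=0$ for $x\neq0$, which is the single point of $D$. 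The locus $B=0$, together with $x\in\{0,\pm2\}$, I treat separately; these produce exactly $R_{++}$, $R_{-+}$ and $J_0$.

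I expect this elimination, and especially ruling out spurious components when $B=0$, to be the main obstacle; I would handle it by a resultant computation in $(x,\kappa)$. For $t\in E_3\setminus\widetilde\Sigma$ this leaves the single point $(m,0,0)$, with $m\notin\{0,\pm2\}$.
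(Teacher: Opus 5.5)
Your proposal is correct. Stages 1 and 2 match the paper, which cites the plane sections $x=\pm2$ and the identity $\delta B=x^2(4-x^2)$ from Lemma \ref{finite locus}, and computes the fiber over $e_0$ directly. Your symmetry $(x,y,z)\mapsto(-x,y,-z)$, which sends $(m,\ell,n)\mapsto(-m,\ell,-n)$, simply makes the paper's ``similarly'' for $x=-2$ explicit.

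There is one slip in Stage 2. On $\{x=0,\ y^2+z^2=2\}$ you have $\kappa=0$ but $n=2yz$, not $0$. This is harmless: $\ell=2y^2-2=2$ forces $y^2=2$, hence $z=0$, so the point lies in $J_0$.

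The genuine difference is Stage 3, where the paper does no elimination at all. By Theorem \ref{etale locus of phi}, $U_{fin}\cong\phi^{-1}(U_{fin})/\langle\tau\rangle$, so fibers over $E_3\setminus\widetilde\Sigma=E_3\cap U_{fin}$ are $\tau$-orbits. Since $\phi(D)=E_3$ and $D=V_X(y,z)$ is the fixed locus of $\tau$, each such fiber is the single point $(m,0,0)$. Adding the fibers over $s_{++},s_{-+},e_0$ then gives the union.

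Your elimination in $(x,\kappa)$ bypasses that quotient argument and the normality of $U_{fin}$ it relies on. It also verifies directly that $\phi^{-1}(E_3)$ has no further components, at the cost of computation. It works as planned:
\begin{itemize}
\item the $n$-equation is $(x^2-2)\kappa^2-(x^4-3x^2+4)\kappa+x^2(x^2-2)=0$, with roots $x^2-2$ and $x^2/(x^2-2)$;
\item on the second root, $\ell=x^2-2$, which equals $x^4-4x^2+2$ only for $x^2\in\{1,4\}$;
\item these are exactly the values where the two roots coincide, so no spurious points arise.
\end{itemize}

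The $B=0$ case you flag as the main obstacle needs no resultant. The identity $Bz^2=x^2(\kappa-x^2+2)$ with $x\neq0$ forces $\kappa=x^2-2$. Then $B=4-x^2$, so $x=\pm2$, which you already handle.
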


\begin{proof}
It can be checked by straightforward substitution that 
\begin{equation*}
    \begin{aligned}
        & E_1=E_1^{+} \cup E_2^{-},~~  E_1^{+}=\{(2,\ell,\ell):\ell \in \C\},~ E_1^{-}=\{(-2,\ell,-\ell):\ell \in \C\},\\
        & E_2=\{(m,2-m^2,-m): m \in \C \}, \\
        & E_3=\{(m,m^4-4m^2+2,m^3-3m): m \in \C\}.
    \end{aligned}
\end{equation*}
Then the pairwise intersection between $E_i$ as shown in \eqref{pairwise intersection of E_i} can be computed directly. The fibers of the four points in $\widetilde{\Sigma}\setminus\{e_0\}$ are already checked in lemma \ref{finite locus}. The fiber of $e_0$ can be computed using lemma \ref{trace functions of n and l} and the defining equation $F=0$ of $X_0(M)$. The fact that $\phi^{-1}(t)=\varnothing$ for any $t \in (E_1\cup E_2)\setminus \widetilde{\Sigma}$ is also proved in lemma \ref{finite locus}. Finally, from Theorem \ref{etale locus of phi}, any $t \in E_3 \setminus \widetilde{\Sigma}$ is a branch point of $\phi$ and has a single point as its preimage. Moreover, since $m=x$, it is straightforward to check $\phi(D) \subset E_3$. The result then follows from the fact that $E_3\setminus \widetilde{\Sigma} $ and $\phi^{-1}(E_3\setminus \widetilde{\Sigma})$ are in 1-1 correspondence. 
\end{proof}

We end this subsection with the following observation, which classifies the reducibility of characters in the singular fibers. 

\begin{lemma}[Reducibility of singular fibers]\label{reducibility of fibers}
Using the notations in lemma \ref{singular fibers}, every character in $R_{++}$ and $R_{-+}$ is reducible, whereas every character in $I_{+-}$, $I_{--}$ and $\phi^{-1}(E_3 \setminus \widetilde{\Sigma})$ is irreducible. Moreover, $J_0$ contains two reducible characters and the rest are all irreducible. 
\end{lemma}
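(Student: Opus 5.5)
The tool I will use throughout is the standard criterion for reducibility of an $\operatorname{SL}_2(\C)$-representation of a two-generator group. A representation $\rho$ of $\pi_1(M)=\langle a,b\rangle$ with $A=\rho(a)$, $B=\rho(b)$ is reducible if and only if $\operatorname{tr}([A,B])=2$. By Lemma \ref{trace identities} (4), $\operatorname{tr}([A,B])=x^2+y^2+z^2-xyz-2=\kappa$. So a character $(x,y,z)\in X_0(M)$ is reducible exactly when $\kappa(x,y,z)=2$. The plan is to evaluate $\kappa$ on each of the singular fibers listed in Lemma \ref{singular fibers} and decide where it equals $2$.

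\textbf{The fibers over $m=\pm 2$.} On $R_{++}=\{(2,y,y)\}$ we get $\kappa=4+y^2+y^2-2y^2-2=2$ identically, so every character there is reducible. On $R_{-+}=\{(-2,y,-y)\}$ we get $\kappa=4+2y^2-2y^2-2=2$ as well. For $I_{+-}$ I will use the computation in the proof of Lemma \ref{finite locus}: the component $C_2$ of $X_0(M)\cap\{x=2\}$, which is exactly $I_{+-}$, satisfies $\kappa=1$. A direct check confirms this: with $x=2$ and $z(z-y)=-2$ one has $\kappa=2+y^2+z^2-2yz=2+(z-y)^2$, and the defining relation gives $(z-y)^2=-1$ after using $z\kappa=xy$. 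Hence $\kappa\neq 2$ and all characters in $I_{+-}$ are irreducible. For $I_{--}$ I will apply the symmetry $(x,y,z)\mapsto(-x,y,-z)$, which preserves $F$ and $\kappa$ and exchanges the $+$ and $-$ fibers. This avoids a separate computation and gives $\kappa=1$ on $I_{--}$.

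\textbf{The fibers over $E_3$ and $e_0$.} Over $E_3\setminus\widetilde{\Sigma}$, the unique preimage lies on the ramification locus $V_X(y,z)$ from the proof of Theorem \ref{etale locus of phi}, that is, on $D=\{(x,0,0)\}$. There $\kappa=x^2-2$, which equals $2$ only when $x=\pm2$. Those points map to $s_{++}$ or $s_{-+}\in\widetilde{\Sigma}$ and are excluded, so these characters are irreducible. On $J_0=\{(0,y,0)\}$ we have $\kappa=y^2-2$, which equals $2$ exactly when $y=\pm2$. This gives precisely two reducible characters, $(0,\pm2,0)$, and the rest of $J_0$ is irreducible.

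The main obstacle is justifying the criterion $\kappa=2\iff$ reducible and keeping the computation on $I_{\pm-}$ consistent with the defining equation. For the criterion I will cite the standard fact (e.g.\ \cite{Culler_Shalen}) and mention the argument: if $\operatorname{tr}[A,B]=2$, then $A$ and $B$ share an eigenvector; conversely, upper-triangular matrices have a unipotent commutator. For $I_{+-}$, if the algebra turns out messy, I will instead rely on the value $\kappa=1$ already derived in Lemma \ref{finite locus}.
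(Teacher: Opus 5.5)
Your strategy is the same as the paper's: use the criterion that a character is reducible if and only if $\kappa=\operatorname{tr}[A,B]=2$, and evaluate $\kappa$ fiber by fiber. Your computations for $R_{++}$, $R_{-+}$, $D$ and $J_0$ are correct, and transporting $I_{+-}$ to $I_{--}$ by $(x,y,z)\mapsto(-x,y,-z)$ is a fine substitute for the paper's ``similarly''.

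\textbf{The $I_{+-}$ step fails.} It is not true that $\kappa\equiv 1$ on $I_{+-}$, and $z\kappa=xy$ does not give $(z-y)^2=-1$. On $x=2$ one has $z\kappa-2y=(z-y)\bigl(z(z-y)+2\bigr)$. This vanishes identically on $I_{+-}$, so it imposes no condition there. Write $g=z-y$. Then $I_{+-}$ is parametrized by $g\neq 0$ via $z=-2/g$, $y=-2/g-g$, and $\kappa=2+g^2$ takes every value except $2$. For instance, $(2,-3,-2)\in I_{+-}$ has $\kappa=3$.

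Your fallback does not rescue this, because the ``$\kappa=1$'' in the proof of Lemma \ref{finite locus} is not a valid deduction either. At $x=2$ the denominator $\kappa^2-x^2\kappa+x^2$ becomes $(\kappa-2)^2$. The formulas \eqref{n l in terms of x kappa} then give $n=\ell=-2$ for every $\kappa\neq 2$, so equating $n$ and $\ell$ determines nothing about $\kappa$.

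\textbf{The fix.} It is already in your own computation, and it is exactly the paper's argument. On $x=2$ you have $\kappa-2=(z-y)^2$, and $z(z-y)=-2$ forces $z-y\neq 0$. Hence $\kappa\neq 2$ on all of $I_{+-}$. The paper phrases this as $\mathcal R(2,y,z)=g^2$ with $g^2+yg+2=0$.

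Your symmetry then carries the conclusion $\kappa\neq 2$ (not ``$\kappa=1$'') over to $I_{--}$. This works because it preserves $V(F)$ and $\kappa$ and maps $I_{+-}$ onto $I_{--}=\{(-2,y,z): z(z+y)+2=0\}$.
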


\begin{proof}
A character $x \in X_0(M)$ is reducible if and only if $\kappa=\tr[a,b]=2$, or
equivalently $\mathcal R(x,y,z):=x^2+y^2+z^2-xyz-4=0$. 
On the section $x=2$, $\mathcal R(2,y,z)=(z-y)^2=g^2.$
Thus every character in $R_{++}$ is reducible. On $I_{+-}$, the
equation $g^2+yg+2=0$ implies $g\neq0$, and hence every character in $I_{+-}$ is
irreducible. Similarly, every character in $R_{-+}$ is reducible and every character in $I_{--}$ is irreducible. From lemma \ref{singular fibers}, every point in $\phi^{-1}(E_3\setminus \widetilde{\Sigma})$ takes the form $(x,0,0)$ for $x\neq \pm2,0$. Such a point has $\mathcal{R}(x,0,0)\neq0$ and hence it is irreducible. Finally, $(0,y,0)$ reducible if and only if $y=\pm2$. Therefore, $J_0$ has exactly two reducible characters. 
\end{proof}

\begin{remark}
It can be checked directly that $\phi^{-1}(U_{\mathrm{\acute et}})$ contains both reducible and irreducible characters. 
\end{remark}

\bigskip
\bigskip

\subsection{Algorithm and generalization}\label{algorithm and generalization}
We have computed the degree of $\phi:X_0(M) \longrightarrow X(T)$(Theorem \ref{restriction morphism is dominant and degree 2}), found the various loci of $\phi$(lemma \ref{finite locus}, \ref{flat locus} and Theorem \ref{etale locus of phi}) and determined its singular fibers(lemma \ref{singular fibers}). Now, we put all these results together to decide how $C_{p/q}, C_{r/s}$(cf. \S 2.3) intersect in $X_0(M)$. To keep our discussion consistent throughout, we assume the Dehn fillings are performed on the chosen boundary component $T$. Then the rule of thumb to compute the intersection pattern is the following: we first compute the \textit{possible candidates} of intersection points of $\phi(C_{p/q})$ and $\phi(C_{r/s})$ in $X(T)$. If a candidate lies in the finite \'{e}tale locus of $\phi$, then it corresponds to two distinct intersection points in $X_0(M)$, each with intersection multiplicity one. If a candidate lies in the branch locus, then it corresponds to one intersection point in $X_0(M)$ with intersection multiplicity two. For other candidates, we use lemma \ref{singular fibers} and lemma \ref{singular-fibers-intersection} below to pull back their corresponding intersection pattern in $X_0(M)$.   

Now the remaining question is how to define \textit{possible candidates}. The key idea is the following observation. Let $\chi \in X_0(M)$ and $\rho$ a representative of $\chi$. Let $A=\rho(\mu),B=\rho(\lambda)$. Then $\rho|_{\pi_{1}(T)}$ is abelian and hence reducible. Therefore, after proper conjugation, we can assume $A,B$ are both upper-triangular with diagonal entries given by $(M,M^{-1})$ and $(L,L^{-1})$ respectively, where $(M,M^{-1}), (L,L^{-1})$ are eigenvalues of $A,B$. Then the diagonal entries of $\rho(\mu^{p}\lambda^{q})=A^p B^q$ are given by $(M^pL^q,M^{-p}L^{-q})$. If $\chi \in C_{p/q}\cap C_{r/s}$, then we must have $M^p L^q=M^r L^s=1$. So we define $\mathcal{S}_{p/q,r/s}:=\{(M,L)\in (\C^{*})^2 \mid M^p L^q=M^r L^s=1\}$. Given the eigenvalues of $\rho(\mu),\rho(\lambda)$, the corresponding character in $X(T)$ can be easily obtained from the quotient map $\epsilon:(\mathbb C^*)^2\longrightarrow X(T),$ given in trace coordinates by
\begin{equation}\label{map epsilon}
    \epsilon(M,L)=\left(M+M^{-1},L+L^{-1},ML+M^{-1}L^{-1}\right).
\end{equation}
Thus $X(T)$ is identified with the quotient of $(\mathbb C^*)^2$ by the Weyl involution $\iota(M,L)=(M^{-1},L^{-1})$. We now define the possible candidate set as follows. 

\begin{definition}\label{def of B_gamma,delta}
The possible candidate set of intersection points in $X(T)$ is defined by $\mathcal{B}_{p/q,r/s}:=\epsilon(\mathcal{S}_{p/q,r/s})$, where $\mathcal{S}_{p/q,r/s}:=\{(M,L)\in (\C^{*})^2 \mid M^p L^q=M^r L^s=1\}$ and $\epsilon$ is given by \eqref{map epsilon}. 
\end{definition}

A point $b \in \mathcal{B}_{p/q,r/s}$ is called a "possible" candidate because it does not necessarily correspond to any intersection of $C_{p/q}$ and $C_{r/s}$. One possibility is that the fiber of $b$ may be empty, as described in lemma \ref{singular fibers}. Then, $b$ is not even the restriction of a character in $X_0(M)$. Even if the fiber of $b$ is not empty, it does not necessarily correspond to an intersection either, as demonstrated in lemma \ref{singular-fibers-intersection} (3) below. This is because $M^pL^q=1$ can only guarantee that  $\rho(\mu^p\lambda^q)$ is an upper-triangular matrix with diagonal entries equal to 1. But in order for the fiber to lie in $C_{p/q}$, we need $\rho(\mu^p\lambda^q)=I$ so that the representation really factors through the Dehn filling slope. Fortunately, when a possible candidate $b$ does not lie in the singular locus $\Sigma=\{ (2,2,2),(2,-2,-2),(-2,2,-2),(-2,-2,2)\}$, its fiber, if not empty, will indeed correspond to some intersections of $C_{p/q}\cap C_{r/s}$, as demonstrated by Proposition \ref{correspondence of intersection and fiber} below. To prove it, we first need the following lemma.  

\begin{lemma}\label{simultaneous diagonal}
Suppose that $\rho:\pi_1(T)\longrightarrow \mathrm{SL}_2(\mathbb{C})$ has character $b\in X(T)\setminus \Sigma$. Then with proper conjugation, we have 
\begin{equation}\label{eigenvalues of mu lambda}
\rho(\mu)=
\begin{pmatrix}
M&0\\
0&M^{-1}
\end{pmatrix},
\qquad 
\rho(\lambda)=
\begin{pmatrix}
L&0\\
0&L^{-1}
\end{pmatrix}.
\end{equation}
\end{lemma}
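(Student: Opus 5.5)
The plan is to use that $\pi_1(T)\cong\Z^2$ is abelian, so $A=\rho(\mu)$ and $B=\rho(\lambda)$ commute, and then to split into cases according to whether either matrix is diagonalizable with distinct eigenvalues. First, as already observed before Definition \ref{def of B_gamma,delta}, commuting matrices have a common eigenvector. So after conjugation both $A$ and $B$ are upper triangular, with diagonal entries $(M,M^{-1})$ and $(L,L^{-1})$.

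\textbf{Case 1: $M\neq\pm1$.} Then $A$ has distinct eigenvalues, and we conjugate so that $A=\operatorname{diag}(M,M^{-1})$. Writing $B=\begin{pmatrix}L&c\\0&L^{-1}\end{pmatrix}$, the relation $AB=BA$ gives $Mc=M^{-1}c$. Since $M^2\neq1$, this forces $c=0$, so $B$ is diagonal as well.

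\textbf{Case 2: $L\neq\pm1$.} This is symmetric to Case 1 with the roles of $A$ and $B$ exchanged.

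\textbf{Case 3: $M,L\in\{\pm1\}$.} Then $m=\tr A=\pm2$ and $\ell=\tr B=\pm2$. Also $AB$ is upper triangular with diagonal $(ML,(ML)^{-1})$, so $n=2ML=\pm2$ with $n=m\ell/2$. The character is therefore one of $(2,2,2),(2,-2,-2),(-2,2,-2),(-2,-2,2)$, which is exactly the set $\Sigma$. Equivalently, one can note that $\epsilon(\pm1,\pm1)\in\Sigma$ directly from \eqref{map epsilon}. This contradicts the hypothesis $b\notin\Sigma$, so Case 3 cannot occur.

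Combining the three cases proves the lemma. The one point needing care is that the character $b$ determines $\{M^{\pm1}\}$ and $\{L^{\pm1}\}$ (through $m$ and $\ell$) and also the product $ML$ up to the simultaneous inversion $\iota$ (through $n$). This justifies reading off the triangular diagonal entries from $b$ in Case 3, so that $b\notin\Sigma$ really excludes $M,L\in\{\pm1\}$. I expect no real obstacle beyond this bookkeeping. The hypothesis is sharp, since at points of $\Sigma$ a parabolic pair such as $A=\begin{pmatrix}1&1\\0&1\end{pmatrix}$, $B=I$ cannot be diagonalized.
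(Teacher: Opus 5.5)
Your proof is correct and follows essentially the same route as the paper, which simply quotes that an abelian subgroup of $\mathrm{SL}_2(\mathbb{C})$ is conjugate either into the diagonal group or into the group of $\pm$ upper-triangular unipotent matrices, and then observes that the latter case forces $b\in\Sigma$. Your case split on whether $M$ or $L$ differs from $\pm1$ proves that dichotomy by hand and makes the check $\epsilon(\pm1,\pm1)\in\Sigma$ explicit. In Case 1, conjugating by an upper-triangular matrix keeps $B$ triangular; in any case, the centralizer of a diagonal matrix with distinct eigenvalues consists of diagonal matrices.
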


\begin{proof}
Let $b=(m,\ell,n)\in X(T)\setminus \Sigma,$ and let $\rho:\pi_1(T)\longrightarrow \mathrm{SL}_2(\mathbb{C})$ be a representation with character $b$. Because $\pi_1(T)$ is abelian, $\rho(\pi_1(T))$ is is an abelian subgroup of $\mathrm{SL}_2(\mathbb{C})$. Up to conjugation, this means $\rho(\pi_1(T))$ is a subgroup of either $D$ or $S \subset \mathrm{SL}_2(\mathbb{C})$, where $D$ is the group of diagonal matrices and $S$ is the group of upper triangular unipotent matrices and their negatives. If $\rho(\pi_1(T)) \subset S$, then $b \in \Sigma$ and this contradicts the fact that $b \notin \Sigma$. So we must have $\rho(\pi_1(T)) \subset D$. Therefore, with proper conjugation, we can assume $\rho(\mu),\rho(\lambda)$ take the forms as in \eqref{eigenvalues of mu lambda}. 
\end{proof}

\begin{corollary}\label{equality on M,L imply equality on mu,lambda}
Suppose that $M^pL^q=1$ and $b=\epsilon((M,L)) \in X(T)\setminus\Sigma$. If $\phi^{-1}(b)$ is nonempty, then for any $\chi \in \phi^{-1}(b)$ and any representative $\rho$ of $\chi$, we have $\rho(\mu^p \lambda^q)=I$. In particular, $\chi \in C_{p/q}$ for any $\chi \in \phi^{-1}(b)$. 
\end{corollary}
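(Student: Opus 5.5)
Fix $\chi \in \phi^{-1}(b)$ and a representative $\rho$ of $\chi$. The plan is to reduce everything to the boundary torus. Let $\rho_T := \rho|_{\pi_1(T)}$. Since $\phi$ is the boundary restriction map, the character of $\rho_T$ is $\phi(\chi) = b \in X(T)\setminus\Sigma$. Lemma \ref{simultaneous diagonal} then lets us conjugate $\rho$ by some $g \in \mathrm{SL}_2(\C)$ so that $\rho(\mu)$ and $\rho(\lambda)$ are both diagonal, with diagonal entries $(M',M'^{-1})$ and $(L',L'^{-1})$. Conjugation does not change whether $\rho(\mu^p\lambda^q)=I$, so we may assume $\rho$ is already in this form.

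Next we identify $(M',L')$ with the given $(M,L)$ up to the Weyl involution. From the diagonal form, $\epsilon(M',L') = (\tr\rho(\mu),\tr\rho(\lambda),\tr\rho(\mu\lambda)) = b = \epsilon(M,L)$. Because $\epsilon$ is the quotient of $(\C^*)^2$ by $\iota$, this gives $(M',L') = (M,L)$ or $(M^{-1},L^{-1})$. If one wants to avoid citing the quotient property, it can be checked directly. From $M'+M'^{-1}=M+M^{-1}$ we get $M'\in\{M,M^{-1}\}$, and similarly $L'\in\{L,L^{-1}\}$. The third coordinate $ML+M^{-1}L^{-1}$ then rules out the mixed choices, except in degenerate cases such as $M=\pm1$ or $L=\pm1$, where the two choices coincide anyway. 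This bookkeeping, including the degenerate cases, is the only point needing care.

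In either case $M'^pL'^q = (M^pL^q)^{\pm1} = 1$. Hence
$$\rho(\mu^p\lambda^q) = \rho(\mu)^p\rho(\lambda)^q = \operatorname{diag}\left(M'^pL'^q,\,M'^{-p}L'^{-q}\right) = I.$$
Undoing the conjugation gives the identity for the original representative.

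Finally, since $\rho$ kills $\mu^p\lambda^q$, it also kills the normal closure $\langle\langle \mu^p\lambda^q\rangle\rangle$. So $\rho$ factors through $G_{p/q}$, which means $\chi \in j_{p/q}(X(M_{p/q}))$. Together with $\chi\in X_0(M)$, this gives $\chi\in C_{p/q}$.
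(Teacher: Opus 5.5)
Your proof is correct and follows the paper's route. You restrict to $\pi_1(T)$, simultaneously diagonalize via Lemma \ref{simultaneous diagonal}, and read off $\rho(\mu^p\lambda^q)=I$ from the diagonal entries; the paper instead just asserts the diagonal entries can be taken to be $(M,M^{-1})$ and $(L,L^{-1})$, whereas you use the third coordinate of $\epsilon$ to check explicitly that the simultaneous eigenvalues are $(M,L)$ or $(M^{-1},L^{-1})$ rather than a mixed pair.
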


\begin{proof}
Suppose that $b=\epsilon(M,L)\notin \Sigma$. Let $\chi \in\phi^{-1}(b)$ and $\rho:\pi_{1}(M) \longrightarrow \operatorname{SL}(2,\C)$ a representative of $\chi$. Let $\rho_{T}$ be the restriction of $\rho$ to $\pi_1(T)$. Then the character of $\rho_T$ is $b$. From lemma \ref{simultaneous diagonal}, there exists $g\in \pi_1(T)$ such that $g\rho_T(\mu)g^{-1},~g\rho_T(\lambda)g^{-1}$ are both diagonal with entries $(M,M^{-1}),(L,L^{-1})$. Hence
$$
g\rho_{T}(\mu^p\lambda^q)g^{-1}=(g\rho_T(\mu)g^{-1})^{p}(g\rho_T(\lambda) g^{-1})^{q}=\begin{pmatrix}
M^pL^q&0\\
0&M^{-p}L^{-q}
\end{pmatrix}
=I
$$
Therefore, $\rho(\mu^p\lambda^q)=\rho_{T}(\mu^p\lambda^q)=I$. 
\end{proof}

We now summarize all the discussion above into the following important proposition, which connects the intersection $C_{p/q}\cap C_{r/s}$ with the fibers of the possible candidates. 

\begin{proposition}\label{correspondence of intersection and fiber}
Let $\gamma=\frac{p}{q}$ be a slope. Then, every point of $C_\gamma$ lies over(via $\phi$) a point of $\mathcal B_{\gamma}=\epsilon\left(V(M^pL^q-1)\right)$. Conversely, let $b\in\mathcal B_{\gamma}.$ If $b \notin \Sigma$, then $\phi^{-1}(b)\subset C_\gamma$. In particular, let $\gamma=\frac pq,\delta=\frac rs$ be distinct primitive slopes. Then we have
\begin{equation}
\begin{cases}
& C_\gamma\cap C_\delta\subset\phi^{-1}(\mathcal B_{\gamma,\delta}), \\
& \phi^{-1}(\mathcal B_{\gamma,\delta}\setminus \Sigma)
\subset C_\gamma\cap C_\delta.
\end{cases}
\end{equation}
\end{proposition}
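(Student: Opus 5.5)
The first inclusion is a triangularization argument, and the converse is a direct application of Corollary \ref{equality on M,L imply equality on mu,lambda}. I would treat the two directions of the single-slope statement separately and then intersect them for two slopes.

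\emph{Forward direction.} Let $\chi\in C_\gamma$ with a representative $\rho$ that factors through $G_\gamma$, so $\rho(\mu^p\lambda^q)=I$. The restriction $\rho|_{\pi_1(T)}$ has abelian image, so after conjugation $\rho(\mu)$ and $\rho(\lambda)$ are simultaneously upper triangular, with diagonals $(M,M^{-1})$ and $(L,L^{-1})$. Then $\rho(\mu^p\lambda^q)$ is upper triangular with diagonal $(M^pL^q,M^{-p}L^{-q})$. Since this matrix is $I$, we get $M^pL^q=1$.

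The traces of $\rho(\mu)$, $\rho(\lambda)$ and $\rho(\mu\lambda)$ are $M+M^{-1}$, $L+L^{-1}$ and $ML+M^{-1}L^{-1}$, because the diagonal of a product of upper triangular matrices is the product of the diagonals. Hence $\phi(\chi)=\epsilon(M,L)\in\mathcal B_\gamma$.

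One subtlety is that $\chi$ is only a character, and its representative might not factor through $G_\gamma$. For $\chi$ in the image of $j_\gamma$ we may choose the representative to be the pullback of a representation of $G_\gamma$, which does factor, so this causes no trouble.

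\emph{Converse.} Let $b\in\mathcal B_\gamma\setminus\Sigma$, so $b=\epsilon(M,L)$ with $M^pL^q=1$, and take $\chi\in\phi^{-1}(b)$. Corollary \ref{equality on M,L imply equality on mu,lambda} gives $\rho(\mu^p\lambda^q)=I$ for every representative $\rho$ of $\chi$. Thus $\rho$ factors through $G_\gamma$, so $\chi\in j_\gamma(X(M_\gamma))\cap X_0(M)=C_\gamma$.

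The one point to check is that $b=\epsilon(M,L)$ also equals $\epsilon(M',L')$ for the eigenvalue pair $(M',L')$ actually produced by Lemma \ref{simultaneous diagonal}. Since $b\notin\Sigma$ the restriction is diagonalizable, and the trace triple $(m,\ell,n)$ determines $(M,L)$ up to the Weyl involution $\iota$. The triple does this because $n$ distinguishes $(M,L)$ from $(M,L^{-1})$. As $\iota$ preserves the condition $M^pL^q=1$, we may take $(M',L')=(M,L)$.

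\emph{Two slopes.} If $\chi\in C_\gamma\cap C_\delta$, apply the forward argument to a representative of $\chi$, which factors through both quotients since it is a single $\rho$ killing both $\mu^p\lambda^q$ and $\mu^r\lambda^s$. The same triangularization gives one pair $(M,L)$ with $M^pL^q=M^rL^s=1$, so $\phi(\chi)\in\epsilon(\mathcal S_{\gamma,\delta})=\mathcal B_{\gamma,\delta}$. Conversely, $\mathcal B_{\gamma,\delta}\setminus\Sigma$ lies in both $\mathcal B_\gamma\setminus\Sigma$ and $\mathcal B_\delta\setminus\Sigma$, so the converse applied twice gives the second inclusion.

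The main obstacle is the well-definedness issue in the converse: that the eigenvalue pair is determined by $b$ up to $\iota$, and that the corollary's hypothesis is independent of which preimage of $b$ under $\epsilon$ is chosen. I would settle this by noting that $\epsilon$ is exactly the quotient by $\iota$, which is how $X(T)$ was identified with $(\C^*)^2/\iota$.
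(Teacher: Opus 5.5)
Your argument follows the paper, which states this proposition as a summary of two earlier pieces: the triangularization remark before Definition~\ref{def of B_gamma,delta} gives the forward direction, and Corollary~\ref{equality on M,L imply equality on mu,lambda} gives the converse. Your single-slope treatment is more careful than the text, since you pick the representative pulled back from $G_\gamma$ and check that $\epsilon^{-1}(b)$ is one $\iota$-orbit. One small correction there: $n$ separates $(M,L)$ from $(M,L^{-1})$ only when $(M-M^{-1})(L-L^{-1})\neq 0$. In the remaining cases $(M,L^{-1})$ already equals $(M,L)$ or $\iota(M,L)$, so your orbit claim still holds.

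The step that fails as written is the forward inclusion for two slopes. From $\chi\in C_\gamma\cap C_\delta$ you only get a representative $\rho_1$ with $\rho_1(\mu^p\lambda^q)=I$ and a representative $\rho_2$ with $\rho_2(\mu^r\lambda^s)=I$. Saying ``it is a single $\rho$ killing both'' assumes what has to be shown. For reducible $\chi$ the two representatives need not be conjugate; this is the same subtlety you flagged for one slope.

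It really happens here. Let $N$ be the elementary nilpotent matrix, and set $\rho(a)=I+N$, $\rho(b)=\operatorname{diag}(\beta,\beta^{-1})$. Then:
\begin{itemize}
\item $\rho(w)=I+(\beta^2-1+\beta^{-2})N$, so the relation $aw=wa$ holds;
\item the character is $(2,y,y)\in R_{++}$;
\item $\rho(\mu^p\lambda^q)=I+\bigl(p+q(\beta-\beta^{-1})^2\bigr)N$, so this $\rho$ kills at most one slope, even though $R_{++}\subset C_\gamma$ for every $\gamma$.
\end{itemize}

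The repair uses only tools you already have. The one-slope forward argument gives $(M_1,L_1)$ with $M_1^pL_1^q=1$ and $(M_2,L_2)$ with $M_2^rL_2^s=1$, both lying in $\epsilon^{-1}(\phi(\chi))$. This fibre is a single $\iota$-orbit and both equations are $\iota$-invariant, so $(M_1,L_1)\in\mathcal S_{\gamma,\delta}$ and $\phi(\chi)\in\mathcal B_{\gamma,\delta}$. In other words $\mathcal B_\gamma\cap\mathcal B_\delta=\mathcal B_{\gamma,\delta}$, and the two-slope inclusion is the one-slope inclusion applied twice.

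Alternatively, replace the representative of a reducible $\chi$ by its diagonal semisimplification. The diagonal entries of an upper triangular $\rho_i(\gamma)=I$ are $1$, and a reducible character determines its diagonal character up to inversion, so the diagonal representative kills both slopes. Your proof of the second inclusion is fine as written.
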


Since none of the point in $\Sigma$ lies in the finite(flat) locus $U_{fin}=U_{flat}$ of $\phi$, the pullback pattern of points in $\mathcal B_{\gamma,\delta}\cap U_{fin}$ is easy to describe: each point $y$ in the \'{e}tale locus $U_{\mathrm{\acute et}}\subset X(T)$ will be lifted to two distinct points in $C_\gamma\cap C_\delta\cap \phi^{-1}(U_{fin})\subset X_0(M)$, and each $y$ in the branch locus $U_{fin} \setminus U_{\mathrm{\acute et}}$ will be lifted to a single point in $C_\gamma\cap C_\delta\cap \phi^{-1}(U_{fin})$. For other points in $\mathcal B_{\gamma,\delta}$, we use lemma \ref{singular fibers} and the following lemma to pull back the intersection pattern of $C_\gamma$ and $C_\delta$ in $X_0(M)$. 

\begin{lemma}\label{singular-fibers-intersection}
Let $\frac pq, \frac rs$ be distinct primitive slopes. Then 

\begin{enumerate}
\item $R_{++}\subset C_{p/q}\cap C_{r/s}$ for every pair of slopes $p/q$ and $r/s$.

\item If $p$ and $r$ are both even, then $R_{-+}\subset C_{p/q}\cap C_{r/s}.$ Otherwise, $R_{-+} \cap C_{p/q}\cap C_{r/s}=\varnothing$. 

\item $C_{p/q}\cap C_{r/s}\cap I_{+-}=\varnothing.$ 

\item $C_{p/q}\cap C_{r/s}\cap I_{--}=\varnothing.$
 
\end{enumerate}
\end{lemma}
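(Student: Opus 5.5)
The approach is to decide membership in $C_{p/q}$ directly from a well-chosen representative of each character in the four fibres of Lemma \ref{singular fibers}. Recall that $\chi\in C_{p/q}$ means that $\chi$ is the character of some representation killing $\mu^p\lambda^q$. For a reducible character we can always pick the diagonal (semisimple) representative. For an irreducible character every representative is conjugate to every other, so it suffices to test a single one. Throughout, the word $w=bab^{-1}a^{-1}b^{-1}ab$ abelianizes to $a$, so $\lambda=wa^{-1}$ lies in the commutator subgroup.

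\textbf{(1) and (2).} On $R_{++}$ and $R_{-+}$ all characters are reducible by Lemma \ref{reducibility of fibers}. Take the diagonal representative $\rho$. Its image is abelian, so $\rho(\lambda)=\rho(wa^{-1})=I$. Moreover $x=\pm2$ forces $\rho(a)=\pm I$. Hence $\rho(\mu^p\lambda^q)=(\pm1)^pI$.

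On $R_{++}$ this is $I$ for every slope, so $\rho$ factors through both $G_{p/q}$ and $G_{r/s}$. Since $R_{++}\subset X_0(M)$, this gives $R_{++}\subset C_{p/q}\cap C_{r/s}$.

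On $R_{-+}$ we get $(-1)^pI$. This is trivial for both slopes exactly when $p$ and $r$ are both even. In the remaining case, say $p$ odd, any representation with such a character $\chi$ satisfies $\chi(\mu^p\lambda^q)=\tr\bigl((-1)^pI\bigr)=-2\neq 2$. So no representation with character $\chi$ kills $\mu^p\lambda^q$, and $\chi\notin C_{p/q}$. This is a trace statement, so it does not depend on the chosen representative.

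\textbf{(3) and (4).} These characters are irreducible by Lemma \ref{reducibility of fibers}, so one representative suffices. On $I_{+-}$ we have $x=2$. The matrix $\rho(a)$ cannot be $\pm I$, since then it would commute with $\rho(b)$ and $\rho$ would be reducible. So after conjugation $\rho(\mu)=\begin{pmatrix}1&1\\0&1\end{pmatrix}$. The matrix $\rho(\lambda)$ commutes with it and has trace $\ell=-2$, so $\rho(\lambda)=-\begin{pmatrix}1&c\\0&1\end{pmatrix}$ for some $c\in\C$. Therefore
$$
\rho(\mu^p\lambda^q)=(-1)^q\begin{pmatrix}1&p+qc\\0&1\end{pmatrix}.
$$
Lying in both $C_{p/q}$ and $C_{r/s}$ requires $q$ and $s$ even together with $p+qc=r+sc=0$.
\begin{itemize}
\item If $q=0$, primitivity gives $p=\pm1$, so $p+qc=\pm1\neq 0$ and $p+qc=0$ fails.
\item If $q,s\neq0$, the two equations give $c=-p/q=-r/s$, contradicting $p/q\neq r/s$.
\end{itemize}
The case $s=0$ is symmetric to $q=0$. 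Hence the intersection with $I_{+-}$ is empty.

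For $I_{--}$ the argument is identical with $\rho(\mu)=-\begin{pmatrix}1&1\\0&1\end{pmatrix}$ and $\ell=-2$. The sign becomes $(-1)^{p+q}$, while the unipotent part is again $\begin{pmatrix}1&p+qc\\0&1\end{pmatrix}$. The same contradiction ($q=0$ forces $p=\pm1$; otherwise $c=-p/q=-r/s$) shows the intersection with $I_{--}$ is empty.

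The main obstacle is the justification in (3) and (4) that $\rho(\lambda)$ really has the form $\pm\begin{pmatrix}1&c\\0&1\end{pmatrix}$ with the stated sign. For this I would use that the centralizer of a nontrivial parabolic in $\operatorname{SL}_2(\C)$ is $\pm$ unipotent, and read off the sign of $\rho(\lambda)$ from the coordinate $\ell=-2$ of $s_{+-}$ and $s_{--}$.
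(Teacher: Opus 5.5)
Your proof is correct. Parts (1) and (2) follow the paper: take the diagonal representative, so $\rho(a)=\pm I$ and $\rho(\lambda)=I$. Two of your details improve on it. Your abelianization remark gives $\rho(\lambda)=I$ uniformly. Your trace observation $\chi(\mu^p\lambda^q)=-2$ makes the ``otherwise'' half independent of the chosen representative, which the paper leaves implicit since it only checks the diagonal one.

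The genuine difference is in (3) and (4). The paper normalizes $\rho(a)=I+N$ and uses the centralizer $\{I+cN\}$ to bring $\rho(b)$ to the form $\begin{pmatrix}0&-1/g\\ g&y\end{pmatrix}$. It then substitutes into the word $w$ to get $\rho(\lambda)=-(I-yzN)$, so membership in $C_{p/q}$ becomes ``$q$ even and $yz=p/q$''. You avoid that word computation. You only use that $\rho(\lambda)$ commutes with the parabolic $\rho(\mu)$, so it lies in $\pm\{I+cN\}$, with the sign read off from $\ell=-2$ via Lemma \ref{singular fibers}. Because the character is irreducible, one fixed representative carries a single number $c$, and $p+qc=r+sc=0$ cannot hold for two distinct slopes.

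Your route is shorter and more structural. The paper's explicit identification $c=-yz$ buys extra information the lemma does not need: it describes $C_{p/q}\cap I_{+-}$ for a single slope, namely the points of $I_{+-}$ with $yz=p/q$ when $q$ is even, and nothing when $q$ is odd.
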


\begin{proof}
By lemma \ref{reducibility of fibers}, every character in $R_{++}$ is reducible and hence has a diagonal representative of the form
$$
\rho(a)=I,\qquad\rho(b)=
\begin{pmatrix}
u&0\\
0&u^{-1}
\end{pmatrix},
\qquad u+u^{-1}=y.
$$
where $a,b$ are the two generators of $\pi_1(M)$. Substituting $\rho(a)=I$ into $w=bab^{-1}a^{-1}b^{-1}ab$ gives $\rho(w)=I.$ Since $\lambda=wa^{-1}$, it follows that $\rho(\mu)=\rho(\lambda)=I.$ Therefore $\rho(\mu^p\lambda^q)=I$ and hence $R_{++} \subset C_{p/q}$. This holds for for every $p,q$, therefore $R_{++} \subset C_{p/q}\cap C_{r/s}$. 

On $R_{-+}$, a diagonal representative may be chosen so that $\rho(\mu)=-I,~\rho(\lambda)=I.$ It follows that $\rho(\mu^p\lambda^q)=(-I)^p.$ Therefore
$$
C_{p/q}\cap R_{-+}=
\begin{cases}
R_{-+},&p\equiv0\pmod2,\\
\varnothing,&p\equiv1\pmod2.
\end{cases}
$$
In particular, if $p$ and $r$ are both even, then $R_{-+}\subset C_{p/q}\cap C_{r/s}.$ Otherwise, $R_{-+} \cap C_{p/q}\cap C_{r/s}=\varnothing$.

We now consider $I_{+-}$, where $x=2$ and $g^2+yg+2=0$ for $g=z-y \neq 0$. Let $\chi \in I_{+-}$ be a point on this curve and let $\rho:\pi_1(M) \longrightarrow \operatorname{SL}_2(\C)$ be a representation whose character equals $\chi$. By lemma \ref{reducibility of fibers}, $\rho$ must be an irreducible representation. 

Now, let $A=\rho(a), B=\rho(b)$ where $a,b$ are the two generators of $\pi_1(M)$. Since $\operatorname{tr}(A)=x=2$ and $\operatorname{det}(A)=1$, the characteristic polynomial of $A$ is $(t-1)^2$ and hence $A$ is unipotent. If $A=I$, then $z=\operatorname{tr}(AB)=\operatorname{tr}(B)=y$ and $g=0$. This contradicts the fact that $g \neq 0$ on $I_{+-}$, so $A \neq I$. Every nontrivial unipotent matrix in $\operatorname{SL}(2,\C)$ is conjugate to $I+N$ where 
$$
N=
\begin{pmatrix}
0&1\\
0&0
\end{pmatrix}.
$$
We may thus assume $A=I+N$. Let $P_c=I+cN$ for any $c \in \C$. Notice that $P_c AP_{c}^{-1}=A$. It can be shown that after proper conjugation with some $P_c$, we can assume 
$$
B=
\begin{pmatrix}
0&-1/g\\
g&y
\end{pmatrix}.
$$
A direct substitution into the word $w$, followed by the use of $g^2+yg+2=0$, gives
$$
\rho(\lambda)=\rho(wa^{-1})=
\begin{pmatrix}
-1&yz\\
0&-1
\end{pmatrix}
=-\bigl(I-yzN\bigr).
$$
Since $\rho(\mu)=\rho(a)=A=I+N$, we have $\rho(\mu)^p=I+pN$ and $\rho(\lambda)^q=(-1)^q\bigl(I-qyzN\bigr).$ Since $\mu$ and $\lambda$ commute, $\rho(\mu^p\lambda^q)=(-1)^q\bigl(I+(p-qyz)N\bigr).$ Thus $\rho(\mu^p\lambda^q)=I$ if and only if $q$ is even and $p-qyz=0.$

Let $\chi \in I_{+-} \cap C_{p/q}$. If $q=0$, the equation $p-qyz=0$ would imply $p=0$, contradicting the primitivity of $(p,q)$. So $q\neq0$ and $yz=\frac pq$. Suppose now that $\chi \in I_{+-}\cap C_{p/q}\cap C_{r/s}$, then $yz=\frac pq=\frac rs$. It follows that distinct $C_{p/q}$ and $C_{r/s}$ have no common point in $I_{+-}$. Similar arguments also hold for $I_{--}$. 
\end{proof}

Now, it remains to address the issue of intersection multiplicity(cf. \ref{interseciton multiplicity}). By Proposition \ref{correspondence of intersection and fiber}, we have $\phi(C_{\gamma})\setminus \Sigma=\mathcal{B}_{\gamma}\setminus \Sigma$ and $C_{\gamma}\cap\phi^{-1}(X(T)\setminus \Sigma)=\phi^{-1}(\mathcal{B}_{\gamma}\setminus \Sigma)$. In particular, since $\mathcal{B}_{\gamma}$ is a curve with no 0-dimensional component, $C_{\gamma}\cap \phi^{-1}(U_{fin})=\phi^{-1}(\mathcal{B}_{\gamma}\cap U_{fin})$ is of pure dimension 1, if not empty. Moreover, for $\delta\neq \gamma$, since $\mathcal{B}_{\gamma}$ and $\mathcal{B}_{\delta}$ have no common 1-dimensional component, $C_{\gamma}\cap \phi^{-1}(U_{fin})$ and $C_{\delta}\cap \phi^{-1}(U_{fin})$ can not have any common 1-dimensional component. Therefore, for every $x \in C_{\gamma} \cap C_{\delta} \cap \phi^{-1}(U_{fin})$, the intersection multiplicity of $C_{\gamma}$ and $C_{\delta}$ at $x$ is well-defined. We have the following lemma to characterize the intersection multiplicity.  

\begin{lemma}\label{intersection multiplicity computation}
$i_x(C_{\gamma},C_{\delta};X_0(M))=1$ if $x \in \phi^{-1}(U_{\mathrm{\acute et}})$ and $i_x(C_{\gamma},C_{\delta};X_0(M))=2$ if $x \in \phi^{-1}(U_{fin}) \setminus \phi^{-1}(U_{\mathrm{\acute et}})$. 
\end{lemma}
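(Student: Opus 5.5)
The plan is to reduce the local computation upstairs to a computation on $X(T)$ by exploiting the finite flat structure of $\phi$ over $U_{fin}$ (Lemma \ref{flat locus}). Fix $x\in C_\gamma\cap C_\delta\cap\phi^{-1}(U_{fin})$ and set $y=\phi(x)$. By Proposition \ref{correspondence of intersection and fiber}, over $U_{fin}$ (which avoids $\Sigma$) we have $C_\gamma\cap\phi^{-1}(U_{fin})=\phi^{-1}(\mathcal B_\gamma\cap U_{fin})$, and similarly for $\delta$.

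\textbf{Step 1: ideals.} I will first upgrade this set-theoretic equality to an equality of ideals. If $f_\gamma, f_\delta\in O_{Y,y}$ are local equations of the (reduced) curves $\mathcal B_\gamma,\mathcal B_\delta$ at $y$, I want the ideal of $C_\gamma$ near $x$ to be generated by $\phi^*f_\gamma$, and likewise for $\delta$. This is where I expect the main difficulty, since it concerns scheme structure rather than points.

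\begin{itemize}
\item Over the étale locus, $\phi^*f_\gamma$ defines a reduced curve because $\phi$ is étale there, so no extra argument is needed.
\item Over the branch locus $E_3\setminus\widetilde\Sigma$, I would use the quotient description $Y\cong X/\langle\tau\rangle$ from Theorem \ref{etale locus of phi}. Near $(x_0,0,0)$ one has $F_z=x_0^2-2+\dots$, and a local chart should give coordinates $(x,y)$ on $X$ with $\tau(x,y)=(x,-y)$, so that $\phi$ is locally $(x,y)\mapsto(u,v)=(x,y^2)$ up to analytic change of coordinates.
\item Since $\mathcal B_\gamma$ is transverse to the branch curve $E_3$ (checked below), its pullback $f_\gamma(x,y^2)$ is reduced, and it equals the ideal of $C_\gamma$.
\end{itemize}

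If a point with $x_0^2=2$ causes trouble, I would handle it by choosing a different pair of local coordinates.

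\textbf{Step 2: length formula.} Because $\phi$ is finite flat over $U_{fin}$, $\phi_*O_X$ is locally free of rank $2$. This gives
$$\sum_{x'\in\phi^{-1}(y)}\operatorname{length}\frac{O_{X,x'}}{(\phi^*f_\gamma,\phi^*f_\delta)}=2\,\dim_{\mathbb C}\frac{O_{Y,y}}{(f_\gamma,f_\delta)}=2\,i_y(\mathcal B_\gamma,\mathcal B_\delta;X(T)).$$
\begin{itemize}
\item If $y\in U_{\mathrm{\acute et}}$, the fiber has two points, each with isomorphic completed local rings, so each $x$ contributes $i_y$.
\item If $y$ is a branch point, the fiber is the single point $x$, which contributes $2\,i_y$.
\end{itemize}
The lemma therefore reduces to showing $i_y(\mathcal B_\gamma,\mathcal B_\delta;X(T))=1$.

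\textbf{Step 3: transversality downstairs.} Since $y\notin\Sigma$, every preimage of $y$ under $\epsilon$ avoids the fixed points $(\pm1,\pm1)$ of $\iota$. Hence $\epsilon$ is étale at $(M_0,L_0)\in\epsilon^{-1}(y)$, and $\mathcal B_\gamma,\mathcal B_\delta$ pull back locally to the subtori $M^pL^q=1$ and $M^rL^s=1$. In logarithmic coordinates these are the lines $p\alpha+q\beta\in 2\pi i\mathbb Z$ and $r\alpha+s\beta\in2\pi i\mathbb Z$. Their Jacobian determinant is $ps-qr\neq0$ because the slopes are distinct, so they meet transversally with multiplicity $1$.

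One caveat: if both subtori have several branches through $y$ (different points of $\epsilon^{-1}(y)$ mapping to the same $y$ are identified only by $\iota$), then $\mathcal B_\gamma$ is locally a single smooth branch. In the same way this shows $\mathcal B_\gamma$ is transverse to $E_3=\epsilon(\{L=M^{-4}\text{-type curve}\})$ when needed in Step 1, which I would verify directly from the parametrization of $E_3$ in Lemma \ref{singular fibers}. Combining the three steps gives $i_x=1$ on $\phi^{-1}(U_{\mathrm{\acute et}})$ and $i_x=2$ on the branch locus.
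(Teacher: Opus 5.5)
\textbf{The gap is in your Step 1, at the slope $4/1$.} Your Steps 2 and 3 are the paper's proof. The paper quotes $i_x(C_\gamma,C_\delta;X_0(M))=e_x(y)\,i_y(\mathcal B_\gamma,\mathcal B_\delta;X(T))$ as a standard fact, then gets $i_y=1$ from the transversality of $M^pL^q=1$ and $M^rL^s=1$ and the \'{e}taleness of $\epsilon$ off $\epsilon^{-1}(\Sigma)$. Your finite-flat length count justifies that fact, but only if Step 1 holds. For one slope it does not, and the transversality to $E_3$ you plan to verify is false there. You note yourself that $E_3=\epsilon(\{L=M^{-4}\})$. Since $M^4+M^{-4}=m^4-4m^2+2$ and $M^3+M^{-3}=m^3-3m$, this says exactly $E_3=\mathcal B_{4/1}$.

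So suppose $\gamma$ or $\delta$ equals $4/1$, and use your chart $(x,y)$ upstairs with $(u,v)=(x,y^2)$ downstairs. Then $f_{4/1}=v$ up to a unit, so $\phi^{*}f_{4/1}=y^2$. But the reduced curve $C_{4/1}$ near a ramification point is the line $D=\{y=z=0\}$, with ideal $(y)$. Your Step 2 length is still $2$, but it is no longer $i_x$.

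Here is a concrete case. Take $\gamma=1/1$, $\delta=4/1$, $\zeta_3=e^{2\pi i/3}$ and $(M,L)=(\zeta_3,\zeta_3^{2})$. This gives $b=(-1,-1,2)\in E_3\cap U_{fin}$, whose fiber is the single point $P=(-1,0,0)$. Your chart is valid there, since $F_z(P)=-1\neq 0$. The tori $D_{1/1}$ and $D_{4/1}$ are transverse (Jacobian $1\cdot 1-1\cdot 4\neq 0$). Hence near $b$ we have $\mathcal B_{1/1}=\{u+1-h(v)=0\}$, and near $P$ we have $C_{1/1}=\{x+1-h(y^2)=0\}$. Therefore $i_P(C_{1/1},C_{4/1})=\dim_{\mathbb C}\mathbb C\{x,y\}/(y,\,x+1-h(y^2))=1$, not $2$. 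With reduced curves, the second assertion of the lemma is false in this case, so no argument can close Step 1 there.

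\textbf{What your argument does prove, and how to fix it.} For every pair with $4/1\notin\{\gamma,\delta\}$ your proof is complete. The Jacobian against $M^4L=1$ is $p-4q\neq 0$, and $\epsilon$ is a local isomorphism off $\epsilon^{-1}(\Sigma)$. So $\mathcal B_\gamma$ is transverse to $E_3$ throughout $U_{fin}$, $\phi^{*}f_\gamma$ is reduced, and Step 1 holds.

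To include $4/1$ you must change the convention, not the proof. Over $U_{fin}$, give $C_\gamma$ the scheme structure of the preimage $\phi^{-1}(\mathcal B_\gamma)$ instead of the reduced structure. Locally this is the filling condition $M^pL^q=1$, which is pulled back from $X(T)$. Then Step 1 is automatic, $C_{4/1}$ is the double line $2D$ there, and Steps 2--3 give multiplicity $2$ as claimed. The paper's one-line appeal to $i_x=e_x(y)\,i_y$ tacitly requires exactly this structure and never addresses the point. You should either adopt it explicitly or add the hypothesis $\gamma,\delta\neq 4/1$.
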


\begin{proof}
Let $x \in C_{\gamma} \cap C_{\delta} \cap \phi^{-1}(U_{fin})$ and $y=\phi(x)$. It is a basic result in algebraic geometry that $i_x(C_{\gamma},C_{\delta};X_0(M))=e_x(y) \cdot i_y(\mathcal{B}_{\gamma},\mathcal{B}_{\delta};X(T))$. We have $e_x(y)=1$ if $y\in U_{\mathrm{\acute et}}$ and $e_x(y)=2$ if $y \in U_{fin} \setminus U_{\mathrm{\acute et}}$. Therefore it suffices to check that $i_y(\mathcal{B}_{\gamma},\mathcal{B}_{\delta};X(T))=1$ for any $y\in \mathcal{B}_{\gamma} \cap \mathcal{B}_{\delta}\setminus \Sigma$. Let $D_{\gamma}=\{ (M,L)\in (\C^{*})^2 \mid M^pL^q=1\}=V(M^pL^q-1)$ and $D_{\delta}=V(M^rL^s-1)$. Then $\mathcal{B}_{\gamma} \cap \mathcal{B}_{\delta}\setminus \Sigma= \epsilon(D_{\gamma}\cap D_{\delta}\setminus \epsilon^{-1}(\Sigma))$. It is straightforward to check that $D_\gamma$ and $D_\delta$ intersects transversely in $\C^2$ for $\delta\neq \gamma$. Since $\epsilon: (\C^{*})^2\setminus \epsilon^{-1}(\Sigma) \longrightarrow X(T)\setminus \Sigma$ is \'{e}tale, the intersection multiplicity is preserved in $X(T)$. Therefore, $i_y(\mathcal{B}_{\gamma},\mathcal{B}_{\delta};X(T))=1$ for any $y\in \mathcal{B}_{\gamma} \cap \mathcal{B}_{\delta}\setminus \Sigma$. This completes the proof. 
\end{proof}

We now have all the preliminaries we need. We can now put everything together to obtain the core of this paper, namely the following algorithm that computes the intersection pattern of $C_\gamma$ and $C_{\delta}$ in the canonical component of the Whitehead link complement, once two distinct slopes $\gamma,\delta$ are taken as input. 

\begin{theorem}[Algorithm for computing intersection pattern]\label{Algorithm for computing intersection pattern}
Let $M$ be the Whitehead link complement in $\mathbb{S}^3$ and $X_0(M)$ the canonical surface of the character variety of $M$. Fix a boundary component $T$ of $M$ and perform two Dehn surgeries with distinct slopes $\gamma=\frac{p}{q},~\delta=\frac{r}{s}$ on $T$ to obtain two generalized curves $C_\gamma, C_\delta \subset X_0(M)$. Let $\phi:X_0(M) \longrightarrow X(T)$ be the boundary restriction map. Then the intersection pattern $C_\gamma \cap C_\delta$ and the corresponding intersection multiplicity can be computed using the following steps. 
\begin{enumerate}
    \item Compute $\mathcal{S}_{\gamma,\delta}=\{(M,L)\in (\C^{*})^2 \mid M^p L^q=M^r L^s=1\}$;
    \item Take Weyl involution and compute $\mathcal B_{\gamma,\delta}=\epsilon(\mathcal{S}_{\gamma,\delta})\subset X(T)$;
    \item For each $b\in \mathcal B_{\gamma,\delta}$, determine the fiber $\phi^{-1}(b)$ and the contribution of the fiber to $C_\gamma \cap C_\delta$, based on the fiber classification of $\phi$.
\end{enumerate}
The fiber classification table for the Whitehead link complement $M$ is summarized as follows. 

\begin{table}[H]
\centering
\begin{tabular}{c|c|c|c}
Location of $b$ &
Fiber of $\phi$ &
Contribution to $C_\gamma\cap C_\delta$ &
Multiplicity \\ \hline
$U_{\mathrm{\acute et}}$ &
two points &
two isolated points &
$1+1$ \\

$U_{fin}\setminus U_{\mathrm{\acute et}}$ &
one point &
one isolated point &
$2$ \\

$s_{++}=(2,2,2)$ &
one-dimensional &
common fiber component &
non-proper \\

$s_{+-}=(2,-2,-2)$ &
one-dimensional &
none &
0 \\

$s_{-+}=(-2,2,-2)$ &
one-dimensional &
common fiber component &
non-proper \\

$s_{--}=(-2,-2,2)$ &
one-dimensional &
none &
0 \\

$e_{0}=(0,2,0)$ &
one-dimensional &
common fiber component &
non-proper \\

Other &
empty &
none &
$0$ \\

\end{tabular}
\caption{fiber classification for the Whitehead link complement}
\label{fiber classification table for whitehead}
\end{table}

\end{theorem}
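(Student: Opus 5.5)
The plan is to assemble the earlier results into a partition of $X(T)$ and to read off, piece by piece, the contribution of each fiber of $\phi$ to $C_\gamma\cap C_\delta$. The first step is to show that the algorithm captures everything. By Proposition \ref{correspondence of intersection and fiber}, every point of $C_\gamma\cap C_\delta$ lies in $\phi^{-1}(\mathcal B_{\gamma,\delta})$. Hence it suffices to run over $b\in\mathcal B_{\gamma,\delta}$, which is exactly what steps (1) and (2) compute; note that $\mathcal S_{\gamma,\delta}$ is finite because $ps-qr\neq 0$. Next, partition $X(T)$ into five pieces: $U_{\mathrm{\acute et}}$, the branch part $U_{fin}\setminus U_{\mathrm{\acute et}}$, the five special points $\widetilde\Sigma=\{s_{++},s_{+-},s_{-+},s_{--},e_0\}$, the set $(E_1\cup E_2)\setminus\widetilde\Sigma$, and the set $E_3\setminus\widetilde\Sigma$. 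The last of these lies in $U_{fin}\setminus U_{\mathrm{\acute et}}$ by Lemma \ref{finite locus}, Theorem \ref{etale locus of phi} and Lemma \ref{singular fibers}. This verifies that the rows of the table exhaust all cases.

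For $b\in U_{fin}$, we have $b\notin\Sigma$, because $\Sigma\subset E_1$. Proposition \ref{correspondence of intersection and fiber} then gives $\phi^{-1}(b)\subset C_\gamma\cap C_\delta$. Theorem \ref{etale locus of phi} and Lemma \ref{singular fibers} show that this fiber consists of two points over $U_{\mathrm{\acute et}}$ and one point over the branch locus. The multiplicities $1+1$ and $2$ come from Lemma \ref{intersection multiplicity computation}. Points of $(E_1\cup E_2)\setminus\widetilde\Sigma$ have empty fiber by Lemma \ref{singular fibers}, so they contribute nothing, which is the row ``Other''.

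For the special points, I would argue as follows. The rows for $s_{++}$, $s_{+-}$, $s_{-+}$ and $s_{--}$ follow from Lemma \ref{singular fibers} together with Lemma \ref{singular-fibers-intersection} (1)--(4). For $s_{-+}$, I would record that it lies in $\mathcal B_{\gamma,\delta}$ only when we can take $M=-1$ and $L=1$, which forces $p$ and $r$ to be even. This matches the parity condition in Lemma \ref{singular-fibers-intersection}(2), so whenever $s_{-+}$ occurs as a candidate, $R_{-+}$ is a common component. The remaining point $e_0=(0,2,0)$ is not in $\Sigma$, so Proposition \ref{correspondence of intersection and fiber} applies directly and gives $J_0=\phi^{-1}(e_0)\subset C_\gamma\cap C_\delta$. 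This is a one-dimensional common component, so the intersection there is non-proper. For $R_{++}$ and $R_{-+}$, non-properness is immediate from their being curves contained in both $C_\gamma$ and $C_\delta$.

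I expect the main obstacle to be justifying the multiplicity statement in Lemma \ref{intersection multiplicity computation}. That step uses $i_x=e_x(y)\,i_y$, which requires the scheme-theoretic preimages $\phi^{-1}(\mathcal B_\gamma)$ to agree with $C_\gamma$ near $x$; for now I take this from the lemma. A secondary subtlety is $e_0$. It lies in $E_2\cap E_3$, outside $U_{fin}$, so it has to be handled separately as above rather than through the branch-locus row. I would also check that $\epsilon^{-1}(e_0)=\{(\pm i,-1)\}$-type points can actually occur in $\mathcal S_{\gamma,\delta}$, so that the row is not vacuous.
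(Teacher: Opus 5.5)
Your proposal follows the paper's route: the paper also obtains the theorem by combining Proposition~\ref{correspondence of intersection and fiber} with Lemmas~\ref{singular fibers}, \ref{singular-fibers-intersection} and \ref{intersection multiplicity computation}. Your observation that $s_{-+}\in\mathcal B_{\gamma,\delta}$ forces $p,r$ to be even is a consistency check the paper leaves implicit.

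There is one slip. The preimage is $\epsilon^{-1}(e_0)=\{(\pm i,1)\}$, not $(\pm i,-1)$; the latter map to $(0,-2,0)\in U_{\mathrm{\acute et}}$. So $e_0\in\mathcal B_{\gamma,\delta}$ if and only if $4\mid p$ and $4\mid r$, which happens for example with $4/1,4/3$.

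The multiplicity step you flag is a genuine gap, not a formality. It actually fails for one slope if $C_\gamma$ is taken to be a reduced curve, as Definition~\ref{interseciton multiplicity} presupposes. Lemma~\ref{intersection multiplicity computation} invokes $i_x=e_x(y)\,i_y$ without checking the hypothesis you identified, so you cannot simply cite it.

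\begin{itemize}
\item \textbf{The bad slope.} Since $\epsilon(M,M^{-4})=(m,m^4-4m^2+2,m^3-3m)$, we have $\mathcal B_{4/1}=E_3$, the branch curve itself. Concretely, on $D=V_X(y,z)$ we have $B^2=(AB)^2=-I$. Using $B^{-1}=-B$ and $BAB=-A^{-1}$, this gives $W=(BAB)A^{-1}(BAB)=A^{-3}$, hence $\rho(\mu^4\lambda)=I$.
\item \textbf{The failure.} Near a point $x\in D$ lying over $E_3\cap U_{fin}$, the reduced curve $C_{4/1}$ is $D$, whereas the scheme $\phi^{-1}(\mathcal B_{4/1})$ is the double curve $2D$. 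Since $\phi|_D:D\to E_3$ is an isomorphism, $i_x(C_{4/1},C_\delta)=i_y(E_3,\mathcal B_\delta)=1$, not $2$.
\item \textbf{An explicit instance.} Take $\delta=1/1$ and $x=(-1,0,0)$, lying over $(-1,-1,2)$. Locally $C_{1/1}=V_X(\ell-x)$. We have $d(\ell-x)=3\,dx$ and $T_xX=\ker(dy-dz)$, so this curve is smooth with tangent direction $(0,1,1)$. It is therefore transverse to $D$, whose tangent is $(1,0,0)$.
\item \textbf{Consequence.} The entry $2$ in the branch-locus row needs the hypothesis $4/1\notin\{\gamma,\delta\}$, or else the non-reduced structure on $C_{4/1}$.
\end{itemize}

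For every other pair of slopes you can close the step yourself.

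\begin{itemize}
\item \textbf{Transversality.} The subtori $V(M^pL^q-1)$ and $V(M^4L-1)$ are transverse, and $\epsilon$ is \'etale away from $\epsilon^{-1}(\Sigma)$. Hence $\mathcal B_\gamma$ and $\mathcal B_\delta$ are smooth at $y$, transverse to $E_3$, and transverse to each other.
\item \textbf{Local model.} Because $\phi$ is the quotient by $\tau$, which fixes $D$ pointwise, in suitable local coordinates $\phi(u,v)=(u,v^2)$ with $D=\{v=0\}$ and $E_3=\{w=0\}$. Then $\mathcal B_\gamma$ and $\mathcal B_\delta$ are given by $u=h_1(w)$ and $u=h_2(w)$ with $h_1(0)=h_2(0)=0$ and $h_1'(0)\neq h_2'(0)$.
\item \textbf{Conclusion.} The pullbacks $u=h_i(v^2)$ are smooth, so they coincide with the reduced curves $C_\gamma$ and $C_\delta$ near $x$. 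The local ring $\C\{v\}/\bigl(h_1(v^2)-h_2(v^2)\bigr)$ has length exactly $2$, which gives the claimed multiplicity.
\end{itemize}
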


\begin{remark}\label{advantage of the algorithm}
The main advantage of algorithm \ref{Algorithm for computing intersection pattern} is that it breaks down the whole procedure of computing the intersection into two independent stages. The first stage involves the first two steps of the algorithm, which depend only on the slope coefficients $\frac{p}{q},\frac{r}{s}$ and are easy to compute. The second stage is to find the fiber classification table(cf. Table \ref{fiber classification table for whitehead}). This stage relies on a comprehensive understanding of the boundary restriction map $\phi$, including its degree of map, finite(resp. flat and \'{e}tale) locus, singular fibers, etc. All of this information(except for possibly the intersection pattern in the fibers of the 4 singular points in $\Sigma$) is completely determined by the map $\phi$ and is independent of the slope coefficients. Therefore, the table can be computed and stored \textit{a priori}. Once the two stages are completed, the final step of the algorithm is simply tracking the table point by point, which can be easily done with the assistance of a computer. 
\end{remark}

\begin{remark}\label{R_++ contained in all C_gamma}
Note that for any slopes $\gamma,\delta$, $(1,1) \in \mathcal{S_{\gamma,\delta}}$ and hence $s_{++}=(2,2,2)\in \mathcal{B}_{\gamma,\delta}$. Based on table \ref{fiber classification table for whitehead}, this point will contribute a common line component $R_{++}=\{(2,y,y):y\in \C\}$ to $C_\gamma \cap C_{\delta}$. Therefore, $R_{++}$ is contained in all $C_\gamma$ and $C_\gamma$ is a subvariety of dimension 1 in $X_0(M)$(where $M$ is the Whitehead link complement) for any slope $\gamma$. 
\end{remark}

Based on remark \ref{advantage of the algorithm}, algorithm \ref{Algorithm for computing intersection pattern} can be generalized to other 2-bridge links in $\mathbb{S}^3$. We refer the reader to \cite{Zieschang} $\S 12$ for an introduction to 2-bridge knots and links. If $M$ is a 2-bridge link in $\mathbb{S}^3$, its fundamental group also takes the presentation $\pi_1(M)=\langle a,b \mid ~aw=wa\rangle$, where $w$ is a word in $a,b$ determined by the Schubert normal form of $M$. Therefore, the coordinates for $X(M)$ can be chosen again as $(x,y,z)$, as in \eqref{trace coordinates x,y,z}. We can again fix a boundary torus $T$ and assume that the Dehn fillings are all performed on $T$. The defining equation for the character variety $X(T)$ remains the same for all 2-bridge links. Hence, fix a canonical surface $X_0(M)$(cf. Theorem \ref{Thurston's theorem}) of the character variety of $M$, the boundary restriction map $\phi$ can be defined exactly the same way. The explicit expression for $\phi$ can be computed in a manner similar to that of lemma \ref{trace functions of n and l}, using the new word expression $w$. Due to the symmetry of the boundary components for 2-bridge links, changing the boundary torus $T$ will result only in an exchange of the variables $x,y$ in the expression of $\phi$, and therefore it does not matter which torus boundary we fix. The notion of \textit{possible candidates} in $X(T)$ and the idea of push-forward(resp. pull-back) via $\phi$ can be applied exactly in the same way. In particular, Proposition \ref{correspondence of intersection and fiber} holds for any 2-bridge link in $\mathbb{S}^3$. 

The only difference from the Whitehead link case is that the new boundary restriction map $\phi$ will exhibit different behaviors. Therefore, we need to redo subsection \ref{property of phi for whitehead} for this new $\phi$ to compute its map degree, various loci and singular fibers. Once these results are obtained, the rest follow in a manner similar to Theorem \ref{Algorithm for computing intersection pattern}. We summarize the steps in the following theorem. 

\begin{theorem}[Algorithm for any 2-bridge link]\label{Algorithm for computing intersection pattern in general}
Let $M$ be a 2-bridge link in $\mathbb{S}^3$ and $X_0(M)$ a given canonical surface of the character variety of $M$. Fix a boundary component $T$ of $M$ and perform two Dehn surgeries with distinct slopes $\gamma=\frac{p}{q},~\delta=\frac{r}{s}$ on $T$ to obtain two generalized curves $C_\gamma, C_\delta \subset X_0(M)$. Let $\phi:X_0(M) \longrightarrow X(T)$ be the boundary restriction map. Then the intersection pattern $C_\gamma \cap C_\delta$ and the corresponding intersection multiplicity can be computed using the following steps. 
\begin{enumerate}
    \item Compute the relevant properties of $\phi$. This includes finding the map degree, the finite(resp. flat, \'{e}tale) locus, and the singular fibers. A table similar to Table \ref{fiber classification table for whitehead} should be obtained. 
    \item Compute $\mathcal{S}_{\gamma,\delta}=\{(M,L)\in (\C^{*})^2 \mid M^p L^q=M^r L^s=1\}$;
    \item Take Weyl involution and compute $\mathcal B_{\gamma,\delta}=\epsilon(\mathcal{S}_{\gamma,\delta})\subset X(T)$;
    \item For each $b\in \mathcal B_{\gamma,\delta}$, determine the fiber $\phi^{-1}(b)$ and the contribution of the fiber to $C_\gamma \cap C_\delta$, based on the fiber classification of $\phi$ obtained in step (1).
\end{enumerate}
\end{theorem}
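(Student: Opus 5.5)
The statement is essentially a meta-theorem: correctness of the four-step procedure for a general 2-bridge link $M$. I would prove it by showing that every ingredient used in the proof of Theorem \ref{Algorithm for computing intersection pattern}, except the explicit fiber table, is independent of the particular word $w$. The first step is to check that the slope-dependent part is link-independent. The presentation $\pi_1(M)=\langle a,b\mid aw=wa\rangle$ gives the peripheral pair $\mu=a$, $\lambda=w a^{-1}$ (after possibly correcting $\lambda$ by a power of $\mu$ to obtain the preferred longitude, which only reparametrizes slopes). The target $X(T)=V(m^2+\ell^2+n^2-m\ell n-4)$ and the quotient map $\epsilon$ of \eqref{map epsilon} do not depend on $w$. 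Moreover, the proofs of Lemma \ref{simultaneous diagonal} and Corollary \ref{equality on M,L imply equality on mu,lambda} use only that $\pi_1(T)$ is abelian. Hence Proposition \ref{correspondence of intersection and fiber} holds verbatim:
$$C_\gamma\cap C_\delta\subset\phi^{-1}(\mathcal B_{\gamma,\delta})\quad\text{and}\quad \phi^{-1}(\mathcal B_{\gamma,\delta}\setminus\Sigma)\subset C_\gamma\cap C_\delta.$$
This justifies steps (2) and (3), and shows that the search for intersection points reduces to fibers over the finite set $\mathcal B_{\gamma,\delta}$.

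The second step handles points of $\mathcal B_{\gamma,\delta}$ in the finite locus. Here step (1) is the input: $\deg\phi$, $U_{fin}$, $U_{flat}$, $U_{\mathrm{\acute et}}$ and the special fibers of the new $\phi$. By Proposition \ref{characterization of morphisms using fiber}, over $U_{\mathrm{\acute et}}$ each $b$ has exactly $\deg\phi$ reduced preimages. Over $U_{flat}\setminus U_{\mathrm{\acute et}}$ the preimages have multiplicities $e_x(b)$ summing to $\deg\phi$. I would then generalize Lemma \ref{intersection multiplicity computation} via the projection-type formula $i_x(C_\gamma,C_\delta)=e_x(b)\, i_b(\mathcal B_\gamma,\mathcal B_\delta)$ for $x$ in the finite flat locus. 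I would also reuse the computation that $V(M^pL^q-1)$ and $V(M^rL^s-1)$ meet transversally in $(\C^*)^2$ and that $\epsilon$ is étale off $\epsilon^{-1}(\Sigma)$; together these give $i_b(\mathcal B_\gamma,\mathcal B_\delta)=1$ for $b\notin\Sigma$. If $U_{fin}\neq U_{flat}$ for a given link (the canonical surface need not be a hypersurface, so miracle flatness may fail), those points would have to be treated as special fibers in step (1).

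The third step covers the remaining points of $\mathcal B_{\gamma,\delta}$. These lie outside $U_{fin}$ and, by the table from step (1), have either empty or one-dimensional fibers. For $b\notin\Sigma$, Proposition \ref{correspondence of intersection and fiber} shows that a nonempty fiber lies entirely in $C_\gamma\cap C_\delta$, so it contributes a common (non-proper) component. For $b\in\Sigma$, one argues exactly as in Lemma \ref{singular-fibers-intersection}. On the reducible part of the fiber, one conjugates to diagonal or unipotent form and reads off $\rho(\mu^p\lambda^q)$. On the irreducible part, $\rho(\mu)=\pm(I+N)$, so $\rho(\lambda)=\pm(I+cN)$ for some function $c$ on the fiber. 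Then $\rho(\mu^p\lambda^q)=I$ reduces to a sign condition together with the linear condition $p+qc=0$. Two distinct primitive slopes can satisfy this simultaneously only where the sign conditions allow it and $c$ is undefined.

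The main obstacle is that step (1) is not uniform: the finiteness of $U_{fin}$'s complement, the description of the special fibers, and the claim that a generic fiber has no one-dimensional components all depend on $w$. So the theorem is really a reduction statement, and the proof consists of checking that steps (2)--(4) are valid given any correct fiber table. The delicate point I would verify carefully is the multiplicity formula at non-étale but flat points when $\deg\phi>2$, since the Galois-quotient argument of Theorem \ref{etale locus of phi} is no longer available. There I would instead rely on the local computation of length in the finite flat fiber $A\otimes_B\kappa(b)$ together with the flat base change of the ideals of $\mathcal B_\gamma$ and $\mathcal B_\delta$.
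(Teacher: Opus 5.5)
Your outline follows the paper's own justification, which is only the paragraph before the theorem: Proposition \ref{correspondence of intersection and fiber} is link-independent, Subsection \ref{property of phi for whitehead} is redone for the new $\phi$, and the rest proceeds as in Theorem \ref{Algorithm for computing intersection pattern}. Your uniform treatment of irreducible characters over $\Sigma$ is a real improvement. Since $\mu=a$ is one of the two generators, irreducibility forces $\rho(\mu)\neq\pm I$. Hence $\rho(\mu)=\pm(I+N)$ and $\rho(\lambda)=\pm(I+cN)$, and $p+qc=r+sc=0$ would give $ps-qr=0$. This proves parts (3)--(4) of Lemma \ref{singular-fibers-intersection} for every 2-bridge link at once. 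Note that $c$ is always defined on the irreducible part, so the conclusion is simply that such points never contribute.

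The multiplicity step has a gap. Flat base change gives
\[
\operatorname{length}\, O_{X,x}\big/\bigl(\phi^*I_{\mathcal B_\gamma}+\phi^*I_{\mathcal B_\delta}\bigr)O_{X,x}=e_x(b)\, i_b(\mathcal B_\gamma,\mathcal B_\delta),
\]
which is the local intersection number of the scheme-theoretic preimages $\phi^{-1}(\mathcal B_\gamma)$ and $\phi^{-1}(\mathcal B_\delta)$. It equals $i_x(C_\gamma,C_\delta;X_0(M))$ only if $I_{C_\gamma,x}=\phi^*I_{\mathcal B_\gamma}\,O_{X,x}$ (and likewise for $\delta$), i.e.\ only if these preimages are reduced at $x$. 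Near $x$, $\phi^{-1}(\mathcal B_\gamma)$ is a Cartier divisor on a Cohen--Macaulay surface, so it is reduced iff it is generically reduced. That fails exactly when a component through $x$ lies in the ramification locus, which requires a component of $\mathcal B_\gamma$ to lie in the branch locus.

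This already happens for the Whitehead link, because $E_3=\epsilon\bigl(V(M^4L-1)\bigr)=\mathcal B_{4/1}$. Over $U_{fin}$ the reduced curve $C_{4/1}$ is therefore the ramification line $D=\{(x,0,0)\}$, while $\phi^{-1}(\mathcal B_{4/1})=2D$. Take $\delta=1/1$: the candidate $b=(-1,-1,2)\in E_3\cap U_{fin}$ has the single preimage $(-1,0,0)$. Use local coordinates $(x,y)$ on $X_0(M)$ and $(x,y^2)$ on $X(T)$, so that $\phi$ is $(x,y)\mapsto(x,y^2)$. Since $\mathcal B_{1/1}$ is transverse to $E_3=\{y^2=0\}$, the curve $C_{1/1}=\{x=\psi(y^2)\}$ is smooth and transverse to $D$. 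Hence $i_x=1$, not $e_x(b)=2$. The paper's Lemma \ref{intersection multiplicity computation} carries the same unstated hypothesis. So step (1) must also record which $\mathcal B_\gamma$ lie in the branch locus, and for those slopes the multiplicity must be computed with $\bigl(\phi^{-1}\mathcal B_\gamma\bigr)_{\mathrm{red}}$.

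Two smaller points. First, your parenthetical that the canonical surface need not be a hypersurface is wrong here. Since $\pi_1(M)$ is 2-generated, $X(M)\subset X(F_2)\cong\A^3_{x,y,z}$ for the free group $F_2$, so every canonical surface is $V(f)$ for an irreducible $f$ and hence Cohen--Macaulay. Miracle flatness then gives flatness over all of $U_{fin}\setminus\Sigma$, so $U_{fin}$ and $U_{flat}$ can differ only at points of $\Sigma$. Second, your claim that candidates outside $U_{fin}$ have empty or one-dimensional fibers is an assumption, not a consequence. A non-finite point can have a finite nonempty fiber when only some sheets escape to infinity. Off $\Sigma$, such a point would contribute isolated intersection points whose multiplicity the flat formula does not cover.
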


\begin{remark}
Once the degree $k$ of the boundary restriction map $\phi$ is computed for a given 2-bridge link $M$, any possible candidate in $U_{\mathrm{\acute et}}\subset X(T)$ will correspond to $k$ distinct intersection points in $X_0(M)$, each with multiplicity $1$, using similar arguments in lemma \ref{intersection multiplicity computation}. However, for candidates in the branch locus, we need to compute the fiber multiplicities of $\phi$ along these branch points to obtain the correct number of intersection points and multiplicities. This requires some additional effort compared to the Whitehead link case, where the degree of $\phi$ is 2 and hence the fiber multiplicity at a ramification point is easily seen to be 2. 
\end{remark}

\subsection{Examples}
In this subsection, we provide a wealth of examples in the case of the Whitehead link to demonstrate the different intersection patterns that could possibly arise. Let $\gamma=\frac{p}{q},~\delta=\frac{r}{s}$ be two primitive slopes. We stick to the notations in the previous subsections and let
$\mathcal{S}_{p/q}=\{(M,L)\in(\C^*)^2:M^pL^q=1\}$, $\mathcal{B}_{\gamma,\delta}=\epsilon(\mathcal{S}_{\gamma,\delta})\subset X(T)$. The various loci of the boundary restriction map $\phi$ are given by: $U_{fin}=U_{flat}=X(T)\setminus(E_1\cup E_2)$ and $U_{\mathrm{\acute et}}=X(T)\setminus\left(E_1\cup E_2\cup E_3\right)$, where $E_1=V(m^2-4), E_2=V(2-\ell+mn)$ and $E_3=V(\ell-m^4+4m^2-2,n-m^3+3m)$. The singular points of $X(T)$ are $\Sigma=\{s_{++},s_{+-},s_{-+},s_{--}\}$ where $s_{++}=(2,2,2),~s_{+-}=(2,-2,-2),~s_{-+}=(-2,2,-2),~s_{--}=(-2,-2,2)$. There is one additional special point in $X(T)$, given by $e_0=(0,2,0)$. 

\subsubsection{Case 1: \'{e}tale intersection point}
Let $\gamma=1/2$ and $\delta=1/5$. The peripheral equations are $ML^2=1,~ML^5=1$. 
Hence $L^3=1,~M=L$ and consequently $\mathcal{S}_{\gamma,\delta}=\{(1,1),(\zeta_3,\zeta_3),(\zeta_3^2,\zeta_3^2)\}$, where $\zeta_3=e^{2\pi i/3}$.

The point $(1,1)$ contributes a common line component $R_{++}$ to $C_\gamma\cap C_{\delta}$, as stated in remark \ref{R_++ contained in all C_gamma}. $(\zeta_3,\zeta_3)\sim(\zeta_3^2,\zeta_3^2)$ determine the same image $(-1,-1,-1) \in U_{\mathrm{\acute et}}$ under the Weyl involution. Since this point lies in $\mathcal{B}_{\gamma,\delta} \cap U_{\mathrm{\acute et}}$, it lifts to two
distinct intersection points in $X_0(M)$. Therefore, aside from $R_{++}$,  $C_{\gamma}\cap C_{\delta}$ has two isolated intersection points, each with intersection multiplicity 1. 

\bigskip
Open sets in Zariski topology are "big" in the sense that they are all dense in the ambient space. In particular, the \'{e}tale locus of $\phi$ is dense in $X(T)$. Therefore, most of the possible candidates in $\mathcal{B}_{\gamma,\delta}$ will lie in $U_{\mathrm{\acute et}}$ and the example above will account for most of the intersection patterns. However, other intersection behaviors are still possible, and the following examples are provided to demonstrate the wide range of possibilities in intersection patterns. 

\subsubsection{Case 2: a point of the branch locus}
Let $\gamma=\frac{1}{1},~\delta=\frac{7}{1}$. The peripheral equations are $ML=1,~M^7L=1$. Let $\zeta_3=e^{2\pi i/3}$ and choose $(M,L)=(\zeta_3,\zeta_3^2)$. Then $(M,L) \in \mathcal{S}_{\gamma,\delta}$ and the corresponding coordinates in $\mathcal{B}_{\gamma,\delta}$ are given by
$$
m=\zeta_3+\zeta_3^{-1}=-1,\qquad \ell=\zeta_3^2+\zeta_3^{-2}=-1,\qquad  n=ML +(ML)^{-1}=2.
$$
Thus $b=(-1,-1,2)\in E_{3}\setminus(E_1 \cup E_2)$. Therefore, the fiber of $b$ consists of a single ramification point and $C_{\gamma}\cap C_{\delta}\cap\phi^{-1}(b)=\{P\}$. This is an example in which the intersection downstairs in $X(T)$ occurs at the branch locus and this intersection point contributes to one geometric intersection point with multiplicity 2 in $X_0(M)$.

\subsubsection{Case 3: a general point of $E_1$}
Let $\gamma=\frac{1}{3},~\delta=\frac{2}{3}.$ The peripheral equations are $ML^3=1,~M^2L^3=1$. Let $\zeta_3=e^{2\pi i/3}$ and choose $(M,L)=(1,\zeta_3)$. Then $(M,L) \in \mathcal{S}_{\gamma,\delta}$ and the corresponding coordinates in $\mathcal{B}_{\gamma,\delta}$ are given by
$$
m=2,\qquad \ell=\zeta_3+\zeta_3^{-1}=-1,\qquad n=\zeta_3+\zeta_3^{-1}=-1.
$$
Thus $b=(2,-1,-1) \in E_1\setminus \Sigma$. Based on the fiber classification table \ref{fiber classification table for whitehead}, its fiber is empty: $\phi^{-1}(b)=\varnothing.$ Consequently, $C_{\gamma}\cap C_{\delta}\cap\phi^{-1}(b)=\varnothing$. This is an example in which the intersection point downstairs in $X(T)$ does not lift to the canonical component.

\subsubsection{Case 4: a general point of $E_2$}
Let $\gamma=\frac{1}{2},~\delta=\frac{7}{2}.$ The peripheral equations are $ML^2=1,~M^7L^2=1$. Choose $M=\zeta_3=e^{2\pi i/3},~L=-M^{-2}=-\zeta_3$. Then $(M,L) \in \mathcal{S}_{\gamma,\delta}$ and the corresponding coordinates in $\mathcal{B}_{\gamma,\delta}$ are given by
$$
m=\zeta_3+\zeta_3^{-1}=-1,\qquad l=(-\zeta_3)+(-\zeta_3)^{-1}=1,\qquad n=ML+(ML)^{-1}=1.
$$
Thus $b=(-1,1,1)\in E_2\setminus \Sigma$. As in the preceding case, the fiber is empty and $C_{\gamma}\cap C_{\delta}\cap\phi^{-1}(b)=\varnothing$.

\subsubsection{Case 5: a point of $E_1\cap E_2$}
Let $\gamma=\frac{1}{2},~\delta=\frac{3}{2}$. The peripheral equations are $ML^2=1,~M^3L^2=1$. Consider $(M,L)=(1,-1) \in \mathcal{S}_{\gamma,\delta}$ whose image in $\mathcal{B}_{\gamma,\delta}$ is the special point $s_{+-}=(2,-2,-2)$. From table \ref{fiber classification table for whitehead}, $C_{\gamma}\cap C_{\delta}\cap I_{+-}=\varnothing.$ This is an example where the intersection point in $X(T)$ occurs at one of the four singular points. The intersection point on $X(T)$ has nonempty fiber, but the fiber can never be an intersection point of $C_\gamma$ and $C_\delta$. 

The same phenomenon occurs at $s_{--}=(-2,-2,2)\in E_1\cap E_2$. For example, one may use the slopes $1/1$ and $3/1$.

\subsubsection{Case 6: a point of $E_1\cap E_3$}
Let $\gamma=\frac{2}{1},~\delta=\frac{2}{3}$. The peripheral equations are $M^2L=1,~M^2L^3=1$. Consider $(M,L)=(-1,1) \in \mathcal{S}_{\gamma,\delta}$ whose image in $\mathcal{B}_{\gamma,\delta}$ is the special point $s_{-+}=(-2,2,-2)$. From table \ref{fiber classification table for whitehead}, the fiber of this point, which is $R_{-+}=\left\{(-2,y,-y):y\in\mathbb C\right\}$, is contained in $C_{\gamma}\cap C_{\delta}$. This is an example in which the singular intersection point in $X(T)$ corresponds to a common curve component in $C_{\gamma}\cap C_{\delta}$. 

Note that the fiber of $s_{++}=(2,2,2)$ is contained in every $C_{\gamma}$, as remarked in \ref{R_++ contained in all C_gamma}. 

\subsubsection{Case 7: the point $e_0\in E_2\cap E_3$}
Let $\gamma=\frac{4}{1},~\delta=\frac{4}{3}$. The peripheral equations are $M^4L=1,~M^4L^3=1$. Consider $(M,L)=(i,1) \in \mathcal{S}_{\gamma,\delta}$ whose image in $\mathcal{B}_{\gamma,\delta}$ is the special point $e_{0}=(0,2,0)$. From table \ref{fiber classification table for whitehead}, the fiber of this point, which is $J_0=\left\{(0,y,0):y\in\mathbb C\right\}$, is contained in $C_{\gamma}\cap C_{\delta}$. This is another example in which the intersection point in $X(T)$ corresponds to a common curve component in $C_{\gamma}\cap C_{\delta}$.

\section{topological interpretation of intersection points}
In the previous section, we studied the way to systematically compute the intersection pattern $C_{\gamma} \cap C_{\delta} \subset X_0(M)$. In this section, we analyze the mathematical significance of the intersection points. As discussed in the introduction, the significance of studying the intersection pattern is at least twofold: to extract topological information of the corresponding 3-manifolds $M$ and to read off algebro-geometric properties of the canonical surface $X_0(M)$ itself. We study the topological implications about $M$ of the intersection points in this section, and we leave the research of the canonical surfaces using the intersection patterns for future work. Our main conclusion is that we can use the intersection points to detect common boundary slopes for both Dehn-filled manifolds. To begin with, we introduce the notions of boundary slope and ANI representation. Then, we focus on the Whitehead link and show how to use ANI intersection to compute the common boundary slopes. This section is inspired by \cite{Michelle}, where the author studies the topological implications of intersections between different components in the character varieties of a special family of 2-brdige knots. 

\subsection{boundary slopes and ANI representation}

An essential surface in a 3-manifold is a properly embedded orientable incompressible surface which is not boundary parallel. Let $M$ be a knot exterior in some homology 3-sphere and use $(\mu,\lambda)$ to denote a pair of preferred meridian and longitude on $\partial M$. Any essential surface $S$ with non-empty boundary in $\partial M$ has a collection of disjoint circles on the torus boundary, and each circle can be represented by $\mu^p\lambda^q$ for some relatively prime $p$ and $q$. $\frac{p}{q}$ is called the slope of $S$, and we say that $\frac{p}{q}$ is a boundary slope for $M$ if there is an essential surface $S$ in $M$ with slope $\frac{p}{q}$. We call $\frac{0}{1}$ the $0$-slope and $\frac{1}{0}$ the $\infty$-slope. 

Let $K$ be a field. Recall that a discrete valuation on $K$ is a homomorphism $v: K^{*} \longrightarrow \Z$ such that
\begin{equation}\label{property of valuation}
\begin{cases}
    v(a+b) \geq \operatorname{min}(v(a),v(b)),~~\text{for any} ~a,b \in K^{*}. \\
    v(a+b) = \operatorname{min}(v(a),v(b)),~~ \text{if} ~v(a)\neq v(b).
\end{cases}
\end{equation}
We extend $v$ to all of $K$ by assigning $v(0)=\infty$. Set $O_v =\{a \in K \mid v(a) \geq 0\}$, which forms a subring of $K$, called the valuation ring of $v$.

Let $\Gamma = \pi_1(M)$ be the fundamental group of $M$. From Culler-Shalen theory, there is a canonical way to construct essential surfaces associated to a representation $\rho : \Gamma \longrightarrow \operatorname{SL}(2,K)$, where $K$ is a field with a discrete valuation $v$ such that $v(\operatorname{tr}(\rho(\gamma)) < 0$ for some $\gamma \in \Gamma$. We summarize the main ideas here, and the reader may refer to \cite{Culler_Shalen},\cite{Shalen}, \cite{Schanuel_Zhang} \cite{Serre} for more references. The first step is to construct a simplicial tree $T_v$ on which $\operatorname{SL}(2,K)$ acts without inversion. The construction of $T_v$ is described in \cite{Culler_Shalen}$\S 2$, \cite{Shalen}$\S 3$, where the vertices of the tree are given by the homothety classes of lattices in $K^2$, and there is an edge between two vertices if there exists representative lattices $\Lambda_0$ and $\Lambda_1$ and a linear automorphism $A$ of $K^2$ such that $v(\operatorname{det}(A))=1$ and $A(\Lambda_0)=\Lambda_1$. It turns out that $\operatorname{SL}(2,K)$ acts on $T_v$ simplicially and without inversions by the action induced from the action on $K^2$.

When $\Gamma$ has a representation $\rho$ into the group $\operatorname{SL}(2,K)$, there is an induced action of $\Gamma$ on the tree $T_v$ via the representation $\rho$. If there is an element $\gamma \in \Gamma$ with $v(\operatorname{tr}(\rho(\gamma)) < 0$, the action of $\Gamma$ on $T_v$ will be nontrivial(i.e. no vertex of $T_v$ is fixed by $\Gamma$). By Culler-Shalen theory, if the action of $\Gamma$ on $T_v$ is nontrivial, there is an essential surface associated to this action(see \cite{Culler_Shalen}$\S 2$). We say such an essential surface is detected by the representation $\rho$. In particular, consider a representation $\rho : \Gamma \longrightarrow \operatorname{SL}(2,K)$ where $K$ is a number field and such that there is some element $\gamma \in \Gamma$ with $\operatorname{tr}(\rho(\gamma))$ not an algebraic integer. We call such a representation an algebraic non-integral(ANI) representation. Then there exists some discrete valuation $v$ so that $v(\operatorname{tr}(\rho(\gamma)))<0$ and an essential surface can be detected. Morever, we can use the following lemma(Corollary 3 of \cite{Schanuel_Zhang}) to determine the boundary slope detected by an ANI representation. 

\begin{lemma}\label{strongly detected boundary slope}
Let $M$ be an irreducible knot exterior such that $\pi_1(M)$ acts non-trivially on the tree defined by a discrete valuation $v$ of a field $K$ via a representation $\rho: \pi_1(M) \longrightarrow \operatorname{SL}(2,K)$. If $\operatorname{tr}(\rho(\delta)) \in O_v$ for one slope $\delta$ in $\partial M$ and $\operatorname{tr}(\rho(\gamma)) \notin O_v$ for another slope
$\gamma$ in $\partial M$,then $M$ contains an essential surface $S$ associated to the tree such that the boundary of $S$ is nonempty and is of the slope $\delta$,i.e. $\delta$ is a boundary slope.
\end{lemma}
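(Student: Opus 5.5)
This is Corollary 3 of Schanuel--Zhang, cited rather than proved in the paper; if I were to prove it, I would use the standard Culler--Shalen construction applied to the peripheral subgroup.

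\textbf{Step 1: the surface.} Since $\pi_1(M)$ acts nontrivially on the Bass--Serre tree $T_v$ of $\operatorname{SL}(2,K)$, Culler--Shalen theory gives a nonempty essential surface $S\subset M$ dual to the action. Concretely:
\begin{itemize}
\item Choose a $\pi_1(M)$-equivariant map from the universal cover $\widetilde M$ to $T_v$, transverse to the midpoints of edges.
\item Take the preimage of the midpoints and project it to $M$.
\item Compress and discard trivial components until the surface is essential.
\end{itemize}
Every component of $S$ has the property that the image of $\pi_1$ of each complementary region fixes a vertex of $T_v$.

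\textbf{Step 2: the elements $\gamma$ and $\delta$ in the tree.} By the definition of the tree, an element $g\in\operatorname{SL}(2,K)$ with $v(\operatorname{tr} g)<0$ acts hyperbolically on $T_v$ and fixes no vertex. Conversely, a trace in $O_v$ means $g$ is elliptic and fixes some vertex.

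Hence $\rho(\gamma)$ fixes no vertex. So $\partial M$ cannot lie in a single complementary region of $S$, and $S$ must meet $\partial M$ in a nonempty collection of parallel essential curves of some slope $\sigma$.

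\textbf{Step 3: identifying the slope.} Since $\pi_1(\partial M)\cong\mathbb Z^2$ is abelian, its action on $T_v$ is governed as follows:
\begin{itemize}
\item Because $\gamma$ is hyperbolic, the whole peripheral subgroup preserves the axis of $\rho(\gamma)$ and acts on it by translations. The translation lengths give a homomorphism $\pi_1(\partial M)\to\mathbb Z$.
\item Each boundary curve of $S$ lies in a complementary region, so it fixes a point of the tree. Its translation length is therefore zero, and the kernel of this homomorphism is generated by $\sigma$.
\item The element $\delta$ is elliptic. An elliptic element preserving an axis on which an abelian group acts by translation must have translation length $0$, so $\delta$ also lies in the kernel.
\end{itemize}
Since the kernel is infinite cyclic and generated by the primitive class $\sigma$, we get $\delta=\sigma^{\pm1}$. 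Thus $S$ has boundary slope $\delta$.

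\textbf{Main obstacle.} I expect the delicate part to be the first bullet of Step 3: showing that the commuting elliptic element $\delta$ preserves the axis of $\gamma$. This uses the fact that commuting isometries preserve each other's fixed sets and axes. The other delicate point is ensuring that compressions in Step 1 do not remove all boundary components. Both are handled in Culler--Shalen, Proposition 2.3.1 and its refinements.
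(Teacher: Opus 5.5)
The paper gives no proof of this lemma; it imports it from Schanuel--Zhang (their Corollary~3), so there is no internal argument to match. Your proposal is a correct, self-contained derivation from the Culler--Shalen existence theorem plus elementary tree geometry.

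A few steps each need one more line, and none of them is where you placed the difficulty.
\begin{itemize}
\item The equivalence between $\operatorname{tr} g\in O_v$ and $g$ fixing a vertex is a lemma, not the definition of $T_v$. If $g\neq\pm I$, choose $e$ not an eigenvector; then $O_ve+O_vge$ is a $g$-stable lattice by Cayley--Hamilton.
\item For $\tau$ to be a homomorphism, no $h\in\pi_1(\partial M)$ may act on the axis $A$ of $\rho(\gamma)$ by a reflection. This holds because $h\rho(\gamma)h^{-1}=\rho(\gamma)$ would then translate $A$ the opposite way.
\item That an elliptic $h$ preserving $A$ has $\tau(h)=0$ follows from $h$-equivariance of nearest-point projection onto $A$: a fixed point of $h$ projects to a fixed point on $A$.
\item The claim ``the kernel is generated by $\sigma$'' uses that $\ker\tau$ is a pure rank-one subgroup of $\mathbb{Z}^2$. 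It is proper since $\tau(\gamma)\neq 0$, and pure since $\mathbb{Z}^2/\ker\tau$ embeds in $\mathbb{Z}$. Hence any primitive class in it generates it.
\end{itemize}
The two points you flagged are not actually delicate. Axis preservation is immediate from commutation. The compression worry is already settled by your Step~2: Culler--Shalen guarantee that the \emph{final} essential surface is still associated to the action, so it must meet $\partial M$ because $\rho(\gamma)$ is hyperbolic.

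Your route also buys something over the bare citation. Since $\tau$ is a nonzero homomorphism to $\mathbb{Z}$, $\ker\tau$ is always infinite cyclic and consists exactly of the elliptic peripheral elements. So the same argument proves Remark~\ref{v(g)<0 is enough}, which the paper asserts without proof.

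Moreover, diagonalize $\rho(\mu)$ and $\rho(\lambda)$ simultaneously over the field of eigenvalues; this is possible because both commute with the hyperbolic $\rho(\gamma)$, which has distinct eigenvalues. One then checks that $\tau(\mu^p\lambda^q)$ is a fixed nonzero multiple of $p\,v(M)+q\,v(L)$. Thus $\ker\tau$ is exactly the slope $-v(L)/v(M)$ of Lemma~\ref{slope is -v(l)/v(m)}, and your tree argument and the paper's eigenvalue computation describe the same homomorphism.
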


The following lemma is very useful when explicitly computing the detected boundary slope.

\begin{lemma}\label{slope is -v(l)/v(m)}
Let $W$ be an irreducible knot exterior with a representation $\rho: \pi_1(W) \longrightarrow \operatorname{SL}(2,K)$, where $K$ is a field with a discrete valuation $v$. If $\operatorname{tr}(\rho(\delta)) \in O_v$ for one slope $\delta=\mu^{p}\lambda^{q}$ and $\operatorname{tr}(\rho(\gamma)) \notin O_v$ for another slope $\gamma$. Then $\frac{p}{q}=-\frac{v(L)}{v(M)}$, where $(M,L)$ is a compatible pair of eigenvalues of $\rho(\mu)$ and $\rho(\lambda)$. 
\end{lemma}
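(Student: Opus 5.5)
To prove Lemma \ref{slope is -v(l)/v(m)}, I would pass to a field in which $\rho(\mu)$ and $\rho(\lambda)$ can be put in upper triangular form simultaneously, and then read off valuations of the eigenvalues. Since $\pi_1(\partial W)\cong\Z^2$ is abelian, $\rho(\mu)$ and $\rho(\lambda)$ commute. After replacing $K$ by a finite extension $K'$ and extending $v$ to a discrete valuation $v'$ on $K'$ (rescaled if necessary, which does not change the ratio $v(L)/v(M)$), I may assume they share an eigenvector. So, after conjugation, $\rho(\mu)$ is upper triangular with diagonal $(M,M^{-1})$ and $\rho(\lambda)$ is upper triangular with diagonal $(L,L^{-1})$. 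Here $(M,L)$ is a compatible pair of eigenvalues. Then for any slope $\mu^a\lambda^b$,
$$\operatorname{tr}\rho(\mu^a\lambda^b)=M^aL^b+M^{-a}L^{-b}.$$
Conjugation preserves traces, and integrality with respect to $v$ is unchanged when passing to $v'$ on $K'$.

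Next I would work with the homomorphism $\psi:\Z^2\to\Z$ given by $\psi(a,b)=a\,v(M)+b\,v(L)$. If $\psi(a,b)\neq0$, then the two summands of the trace have valuations $\pm\psi(a,b)$, which are distinct. By the strict case of the ultrametric inequality \eqref{property of valuation}, the trace then has valuation $-|\psi(a,b)|<0$, so it lies outside $O_v$. If $\psi(a,b)=0$, both summands are units, so the trace lies in $O_v$.

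Now I apply the hypotheses. Since $\operatorname{tr}\rho(\gamma)\notin O_v$, we get $\psi\neq0$, so $(v(M),v(L))\neq(0,0)$. Since $\operatorname{tr}\rho(\delta)\in O_v$, we get $\psi(p,q)=0$, that is, $p\,v(M)+q\,v(L)=0$. Because $(p,q)$ is primitive and nonzero, this determines the slope as the kernel of $\psi$:
$$\frac pq=-\frac{v(L)}{v(M)},$$
interpreted projectively. When $v(M)=0$ this reads $q=0$ and the slope is $\infty$; when $q=0$ it forces $v(M)=0$.

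The main obstacle is the simultaneous triangularization step. I need to handle the parabolic case, where the eigenvalues are $\pm1$ and the matrices may fail to be diagonalizable; upper triangular form suffices, since the diagonal entries still multiply. I also need to check that the valuation extends compatibly to $K'$, which holds for finite extensions with ramification index $e$, scaling all valuations by $e$. Everything else is the ultrametric computation above.
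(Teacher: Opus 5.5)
Your proof is correct and follows the same argument as the paper: with $t=M^pL^q$ one has $\operatorname{tr}\rho(\delta)=t+t^{-1}$, the strict ultrametric inequality forces $p\,v(M)+q\,v(L)=0$, and $\operatorname{tr}\rho(\gamma)\notin O_v$ gives $(v(M),v(L))\neq(0,0)$. Your additional steps fill in points the paper simply assumes when it writes $\rho(\mu)=\operatorname{diag}(M,M^{-1})$ and $\rho(\lambda)=\operatorname{diag}(L,L^{-1})$: you extend $v$ to a finite extension containing the eigenvalues, and you use simultaneous upper triangular form rather than diagonal form, which also covers the parabolic case.
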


\begin{proof}
Since $\rho(\mu)=\operatorname{diag}(M,M^{-1}), ~\rho(\lambda)=\operatorname{diag}(L,L^{-1})$, we have $\rho(\delta)=\rho(\mu^p\lambda^q)=\operatorname{diag}(M^pL^q,M^{-p}L^{-q})$. Let $t=M^pL^q$, then $v(t)=pv(M)+qv(L)$ and $\operatorname{tr}(\rho(\delta))=t+t^{-1}$. If $v(t)\neq0$, then $v(t^{-1)}=-v(t)\neq v(t)$ and hence $v(\operatorname{tr}(\rho(\delta)))=v(t+t^{-1})=\operatorname{min}(v(t),v(t^{-1}))<0$. This contradicts the fact that $\operatorname{tr}(\rho(\delta)) \in O_v$. So we must have $v(t)=0$ and hence $pv(M)+qv(L)=0$. Since $v(\operatorname{tr}(\rho(\gamma)))<0$, $(v(M),v(L))\neq (0,0)$. Therefore, $\frac{p}{q}=-\frac{v(L)}{v(M)}$(we view $\infty = \frac{1}{0}$). 
\end{proof}

\begin{remark}\label{v(g)<0 is enough}
If $\rho$ is an ANI representation such that $\operatorname{tr}(\rho(\gamma)) \notin O_v$ for some slope $\gamma \in \pi_1(T)$ where $T$ is the boundary torus, then it can be shown that $\operatorname{tr}(\rho(\delta)) \in O_v$ for some other slope $\delta$. Therefore, the hypotheses in the previous two lemmas are automatically satisfied, once $\operatorname{tr}(\rho(\gamma)) \notin O_v$ for some slope $\gamma$. 
\end{remark}

For our purpose, we are interested in the case where the ANI representation comes from an intersection point $\chi \in C_\gamma \cap C_{\delta} \subset X_0(M)$, where $M$ is a 2-bridge link complement in $\mathbb{S}^3$. We denote the two cusps of $M$ by $T_1,T_2$, and suppose that the Dehn fillings with distinct slopes $\gamma=\frac{p}{q},~ \delta=\frac{r}{s}$ are performed on $T_1$. We stick to the previous conventions and choose the standard two generators $a,b$ for $\pi_1(M)$, where $a,b$ correspond to the meridians of $T_1,T_2$, respectively. Then the coordinates for $X_0(M)$ are given by $x=\operatorname{tr}\rho(a),~y=\operatorname{tr}\rho(b),~z=\operatorname{tr}\rho(ab)$. Let $\mu_i,\lambda_i$ be the meridian and preferred longitude for $T_i$(so $\mu_1=a,\mu_2=b$). Let $(m_i,\ell_i,n_i)$ be the coordinates of the character variety $X(T_i)$, where $m_i=\operatorname{tr}\rho(\mu_i),~
\ell_i=\operatorname{tr}\rho(\lambda_i),~n_i=\operatorname{tr}\rho(\mu_i\lambda_i) $. We first have the following observation. 

\begin{lemma}\label{traces on T_1 are algebraic integers}
Let $\chi \in C_\gamma \cap C_\delta$ and let $(m_1,\ell_1,n_1)=\phi(\chi)$, where $\phi: X_0(M) \longrightarrow X(T_1)$ is the boundary restriction map onto $T_1$. Then $m_1,\ell_1$ are both algebraic integers. 
\end{lemma}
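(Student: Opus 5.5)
The plan is to use the peripheral equations coming from the two Dehn fillings. By Proposition \ref{correspondence of intersection and fiber}, every point of $C_\gamma\cap C_\delta$ lies over a point of $\mathcal B_{\gamma,\delta}=\epsilon(\mathcal S_{\gamma,\delta})$. So for $\chi\in C_\gamma\cap C_\delta$ with representative $\rho$, we may write $\phi(\chi)=(m_1,\ell_1,n_1)=\epsilon(M,L)$ for some $(M,L)\in(\C^*)^2$ satisfying $M^pL^q=M^rL^s=1$.

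The argument itself works directly with a representative. Restrict $\rho$ to $\pi_1(T_1)$, which is abelian. After conjugation, $\rho(\mu_1)$ and $\rho(\lambda_1)$ are simultaneously upper triangular with diagonals $(M,M^{-1})$ and $(L,L^{-1})$. Since $\rho(\mu_1^p\lambda_1^q)=I$ and $\rho(\mu_1^r\lambda_1^s)=I$, the diagonal entries give $M^pL^q=1$ and $M^rL^s=1$. This step needs no appeal to $\Sigma$ or to Lemma \ref{simultaneous diagonal}, because only the diagonal entries are used.

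Next I show $M$ and $L$ are roots of unity. Set $\Delta=ps-qr$, which is nonzero because $\gamma\neq\delta$ are distinct primitive slopes. Raise the first relation to the power $s$ and the second to the power $q$, then divide:
$$M^{ps}L^{qs}\bigl(M^{rq}L^{sq}\bigr)^{-1}=M^{\Delta}=1.$$
In the same way, raising to the powers $r$ and $p$ and dividing gives $L^{\Delta}=1$. Hence $M$ and $L$ are $|\Delta|$-th roots of unity, and in particular algebraic integers. Then $m_1=M+M^{-1}$ and $\ell_1=L+L^{-1}$ are sums of algebraic integers, because the inverse of a root of unity is again a root of unity. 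So both are algebraic integers, which is the claim. The same reasoning also gives it for $n_1=ML+(ML)^{-1}$.

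There is no real obstacle here; the argument is elementary. The only point needing care is justifying the triangular form with eigenvalue pairs $(M,L)$ that are compatible, meaning they correspond to a common eigenvector. This holds because commuting matrices in $\operatorname{SL}_2(\C)$ share an eigenvector, as the paper already notes in defining the possible candidates. One should also check that the relations hold for these compatible eigenvalues, and that $\Delta\neq0$ follows from distinctness and primitivity of the two slopes.
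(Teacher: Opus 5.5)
Your proof is correct and is essentially the paper's argument: the paper uses the same combination with $\Delta=ps-qr$, only at the matrix level, writing $\rho(\mu_1)^{\Delta}=(\rho(\mu_1)^p\rho(\lambda_1)^q)^s(\rho(\mu_1)^r\rho(\lambda_1)^s)^{-q}=I$ (and similarly for $\rho(\lambda_1)$), whereas you apply it to the diagonal entries after simultaneous triangularization. In both versions the eigenvalues are $|\Delta|$-th roots of unity, so $m_1$ and $\ell_1$ are algebraic integers.
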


\begin{proof}
Let $\rho$ be a representative of $\chi$. Write $P_{\mu_1}=\rho(\mu_1),~ P_{\lambda_1}=\rho(\lambda_1)$. These two matrices commute.  The two filling relations give $P_{\mu_1}^\Delta=(P_{\mu_1}^p P_{\lambda_{1}}^q)^s(P_{\mu_1}^r P_{\lambda_1}^s)^{-q}=I$ and $P_{\lambda_1}^\Delta=(P_{\mu_1}^r P_{\lambda_1}^s)^p(P_{\mu_1}^p P_{\lambda_1}^q)^{-r}=I.$ Thus the eigenvalues of $P_{\mu_1}$ and $P_{\lambda_1}$ are roots of unity. Their traces are therefore algebraic integers. 
\end{proof}

For any $g \in \pi_1(M)$, we have $\chi(g) \in \Z[x,y,z]$ by the trace identities(see lemma \ref{trace identities}). Therefore, $\chi$ is the character of an ANI representation if and only one of $x,y,z$ is not an algebraic integer. From the lemma above, for any $\chi \in C_\gamma \cap C_\delta$, $x=m_1$ must be an algebraic integer. So in order for the intersection character $\chi$ to be an ANI character(that is, the character of an ANI representation), one of the y-coordinate or z-coordinate of $\chi$ is not an algebraic integer. Typically, as shown in Proposition \ref{criterion for ANI} below for the Whitehead link, it is the y-coordinate of $\chi$ that is not an algebraic integer. Therefore, we are interested in those $\chi \in C_\gamma \cap C_{\delta}$ such that the y-coordinate of $\chi$ is not an algebraic integer. 

For such a character $\chi$, since $\chi \in C_\gamma \cap C_\delta$, it represents a character for the fundamental groups of both $M_\gamma$ and $M_\delta$(cf. subsection 2.3), whose restrictions to the remaining peripheral subgroup $\pi_1(T_2)$ are the same. Since the y-coordinate of $\chi$ is not an algebraic integer and the y-coordinate is the trace of $\rho(\mu_2)$, we can use $\chi$ and the arguments above to detect boundary slopes for both $M_\gamma$ and $M_\delta$. The slopes detected on the remaining cusp $T_2$ must be the same for both $M_\gamma$ and $M_\delta$, since they are detected using the same character $\chi$. Let $v$ be a discrete valuation such that $v(y)<0$. From lemma \ref{slope is -v(l)/v(m)} and remark \ref{v(g)<0 is enough}, the common boundary slope for $M_{\gamma}$ and $M_{\delta}$ associated to such an ANI $\chi \in C_\gamma \cap C_\delta$ can be computed by $-\frac{v(L_2)}{v(M_2)}$, where $(M_2,L_2)$ is a compatible pair of eigenvalues of $\rho(\mu_2)$ and $\rho(\lambda_2)$.

\subsection{boundary slopes detected from intersection points}
Based on the discussions in the previous subsection, we can use $\chi \in C_\gamma \cap C_\delta$ to detect common boundary slopes for $M_\gamma$ and $M_\delta$, provided that the y-coordinate of $\chi$ lies in some number field but is not an algebraic integer. In this subsection, we work on the Whitehead link specifically. We give a criterion to determine when an intersection character is ANI, and we explicitly compute the common boundary slopes that can possibly arise from the ANI intersection. 

Let $M$ be the Whitehead link complement in $\mathbb{S}^3$. As demonstrated in subsection 3.2, the canonical component is given by $X_0(M)=\{F(x,y,z)=0\}$, where $F(x,y,z)=z^3-xyz^2+(x^2+y^2-2)z-xy.$ Set $\kappa=\operatorname{tr}\rho([a,b])=x^2+y^2+z^2-xyz-2$, so the equation $F=0$ is equivalently $z\kappa=xy$. The formulas for $(m_1,\ell_1,n_1)$ in terms of $x,y,z$ are given in lemma \ref{trace functions of n and l}. The formulas for $(m_2,\ell_2,n_2)$ are obtained in a similar manner by exchanging $x$ and $y$ in lemma \ref{trace functions of n and l}. To summarize, we have 
\begin{equation}\label{trace formulas for m_i, l_i, n_i}
\begin{aligned}
    &m_1=x,\qquad \ell_1=\kappa^2-y^2\kappa+2y^2-2,\qquad n_1=(x-yz)\kappa-x+2yz\\
    &m_2=y,\qquad \ell_2=\kappa^2-x^2\kappa+2x^2-2,\qquad n_2=(y-xz)\kappa-y+2xz. 
\end{aligned}
\end{equation}

The proposition below provides a simple criterion of when a $\chi \in C_\gamma \cap C_\delta$ is ANI. Recall the definitions of $\mathcal{S}_{\gamma,\delta}$ and $\mathcal{B}_{\gamma,\delta}$ in \ref{def of B_gamma,delta}. 

\begin{proposition}\label{criterion for ANI}
Let $M$ be the Whitehead link complement in $\mathbb{S}^3$. Let $P=(m_1,\ell_1,n_1) \in \mathcal{B}_{\gamma,\delta} \subset X(T_1)$ such that $P$ lies inside $U_{fin}$, the finite locus of $\phi: X_0(M) \longrightarrow X(T_1)$. Define 
\begin{equation}\label{compute T,H}
    T=\frac{m_1(m_1+n_1)}{2-\ell_1+m_1n_1}, 
\qquad  
H=\begin{cases}
    \frac{\ell_1+2-T^2}{2-T}, ~~\text{if} ~~T\neq 2  \\
    4, ~~\text{if} ~~T=2.
    \end{cases}
\end{equation}
Suppose that $\chi=(x,y,z)$ lies in the fiber $\phi^{-1}(P)$. Then $\chi$ is the character of an ANI representation if and only if $y$ is not an algebraic integer, and $y$ is not an algebraic integer if and only if $H$ is not an algebraic integer. 
\end{proposition}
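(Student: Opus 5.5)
The plan is to identify the two auxiliary quantities $T$ and $H$ of \eqref{compute T,H} with natural trace functions on the fiber: $T$ should be $\kappa=\operatorname{tr}\rho([a,b])$ and $H$ should be $y^2$. Once this is done, both equivalences reduce to integrality statements.

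\emph{Step 1: algebraicity.} Let $\chi=(x,y,z)\in\phi^{-1}(P)$. Since $P\in\mathcal B_{\gamma,\delta}$, its coordinates are sums of roots of unity, so they are algebraic integers. Since $P\in U_{fin}$, the fiber $\phi^{-1}(P)$ is finite and cut out by polynomial equations over $\overline{\mathbb Q}$. Hence $x,y,z$ are algebraic numbers and lie in some number field. A representative $\rho$ can then be chosen over a finite extension, so the notion ``ANI'' applies.

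\emph{Step 2: first equivalence.} By the trace identities (lemma \ref{trace identities}), every $\chi(g)$ lies in $\mathbb Z[x,y,z]$. So $\chi$ is ANI if and only if one of $x,y,z$ is not an algebraic integer. By lemma \ref{traces on T_1 are algebraic integers}, $x=m_1$ is always an algebraic integer. If $y$ is also integral, then $z$ is a root of
\[
F(x,y,Z)=Z^3-xyZ^2+(x^2+y^2-2)Z-xy,
\]
a monic polynomial with algebraic-integer coefficients, so $z$ is integral too. Hence $\chi$ is ANI if and only if $y$ is not an algebraic integer.

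\emph{Step 3: identify $T$ and $H$.} In the proof of lemma \ref{finite locus} we showed that $\kappa=\frac{x(x+n)}{\delta}$ wherever $\delta\neq 0$, and $\delta=2-\ell_1+m_1n_1$ is invertible on $U_{fin}$. So $T=\kappa(\chi)$. Lemma \ref{trace functions of n and l} gives $\ell_1=\kappa^2-y^2\kappa+2y^2-2$, i.e.
\[
y^2(2-\kappa)=\ell_1+2-\kappa^2 .
\]
If $T\neq 2$, this gives $y^2=H$ directly.

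\emph{Step 4: the case $T=2$.} Here $\kappa=2$, and the relation $z\kappa=xy$ gives $z=xy/2$. Substituting into $\kappa=x^2+y^2+z^2-xyz-2=2$ and factoring, I expect
\[
(x^2-4)\bigl(1-y^2/4\bigr)=0 .
\]
Since $P\in U_{fin}$ excludes $m_1^2=4$, this forces $y^2=4=H$. So in all cases $y^2=H$.

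\emph{Step 5: conclusion.} An algebraic number $y$ is integral if and only if $y^2$ is integral: one direction is closure of integers under products, the other because $y$ is a root of $Y^2-y^2$. Combined with $y^2=H$ and Step 2, this proves both equivalences.

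The main obstacle is Step 4. It needs care in verifying the factorization, and in checking that the value $H=4$ in that case is consistent with the definition of $H$ rather than an artifact. The identification $T=\kappa$ in Step 3 also relies on the identity $(x+n)B=x(4-x^2)\kappa$ from lemma \ref{finite locus}, which I will re-check.
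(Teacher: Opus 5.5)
Your proposal is correct and follows the paper's proof essentially step for step. You identify $T=\kappa$ through $\kappa(2-\ell_1+m_1n_1)=m_1(m_1+n_1)$, read off $y^2=H$ from $\ell_1+2=\kappa^2+(2-\kappa)y^2$ when $T\neq 2$, and use $(m_1^2-4)(y^2-4)=0$ when $T=2$. The only difference is that you import $\kappa\delta=x(x+n)$ from the proof of lemma \ref{finite locus}, whereas the paper re-derives it from the fiber equations via $2-\ell_1+m_1n_1=m_1^2+(2-\kappa)z^2$. Importing it is legitimate, since the identity holds in the coordinate ring, a domain, after cancelling the nonzero element $B$.
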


\begin{proof}
First, from the defining equation $F=0$ of $X_0(M)$(cf. \eqref{defining equation for X_0(M) Whitehead}), $z$ satisfies a polynomial equation with coefficients in $\Z[x,y]$ and with leading coefficient $1$. Therefore, by the transitivity of integrality, $z$ is an algebraic integer if $x,y$ are. Since $x$ is an algebraic integer by lemma \ref{traces on T_1 are algebraic integers}, $\chi$ is the character of an ANI representation if and only if $y$ is not an algebraic integer. 

Since $P$ lies in the finite locus of $\phi$, the fiber of $P$ is nonempty and we choose $\chi=(x,y,z) \in \phi^{-1}(P)$. From the defining equation of $X_0(M)$, the definition of $\kappa$ and \eqref{trace formulas for m_i, l_i, n_i}, we have 
\begin{equation}\label{equation for a fiber}
    \begin{cases}
        &\kappa=m_1^2+y^2+z^2-m_1yz-2, \\
        & z\kappa=m_1y,\\
        & \ell_1=\kappa^2+(2-\kappa)y^2-2\\
        & n_1=m_1(\kappa-1)+(2-\kappa)yz
    \end{cases}
\end{equation}

From \eqref{equation for a fiber}, we have $2-\ell_1+m_1n_1=m_1^2+(2-\kappa)z^2$ and  $m_1(m_1+n_1)=m_1^2\kappa+(2-\kappa)m_1yz$. Then 
$$
\kappa(2-\ell_1+m_1n_1)=\kappa m_1^2+(2-\kappa)\kappa z^2=m_1^2\kappa+(2-\kappa)m_1yz=m_1(m_1+n_1).
$$
Since the point $P$ lies in the finite locus of $\phi$, $2-\ell_1+m_1n_1 \neq 0$ and hence $\kappa=\frac{m_1(m_1+n_1)}{2-\ell_1+m_1n_1}=T$. When $T\neq 2$, from the third equation of \eqref{equation for a fiber}, we have $y^2=\frac{\ell_1+2-T^2}{2-T}=H$. When $T=\kappa=2$, from the first two equations in \eqref{equation for a fiber}, we have $z=\frac{m_1y}{2},~ 4=m_1^2+y^2-\frac{m_1^2y^2}{4}$. Therefore, $(m_1^2-4)(y^2-4)=0$. Since $P$ lies in the finite locus, $m_1^2-4\neq 0$. So we must have $y^2=4=H$. Collecting these results gives $y^2=H$ and hence $y$ is an algebraic integer if and only if $H$ is.  
\end{proof}

\begin{remark}\label{all are algebraic numbers}
We make a comment that when $P=(m_1,\ell_1,n_1) \in \mathcal{B}_{\gamma,\delta}$, $m_1,\ell_1,n_1$ are clearly all algebraic numbers, and so are $T,H,y,z$. So $\chi$ is indeed the character of some $\rho$ where the target of $\rho$ lies in $\operatorname{SL}(2,K)$ for some number field $K$. Therefore, it makes sense to discuss whether $\chi$ is ANI or not for $\chi \in \phi^{-1}(P)$. 
\end{remark}

\begin{example}\label{example 1 of section 4}
Consider $\gamma=\frac{p}{q}=\frac{1}{3},~\delta=\frac{r}{s}=\frac{1}{15}$. The peripheral equations are $M_{1}L^3_{1}=1,~M_{1}L_{1}^{15}=1$. Let $\zeta_{12}=e^{\pi i/6}$ and choose $(M_1,L_1)=(-i,\zeta_{12})$. Then $(M_1,L_1) \in \mathcal{S}_{\gamma,\delta}$ and the corresponding point in $\mathcal{B}_{\gamma,\delta}$ is given by $P=(m_1,\ell_1,n_1)=(0,\sqrt{3},1)\in U_{\mathrm{\acute et}}$. Substituting $P$ into \eqref{compute T,H} gives $T=0,H=\frac{\sqrt{3}+2}{2}$. From Proposition \ref{criterion for ANI} and discussions in the previous sections, any $\chi \in \phi^{-1}(P)$ will lie in $C_\gamma\cap C_\delta$ and will be the character of an ANI representation, with the negative valuation obtained from a 2-adic valuation. It can be computed that the detected boundary slope in this case is 0(cf. Theorem \ref{boundary slopes are 0 or 4}). 
\end{example}

\begin{example}\label{example 2 of section 4}
Consider $\gamma=\frac{p}{q}=\frac{1}{17},~\delta=\frac{r}{s}=\frac{1}{35}$. The peripheral equations are $M_{1}L^{17}_{1}=1,~M_{1}L_{1}^{35}=1$. Let $\zeta_{3}=e^{2\pi i/3}$ and choose $(M_1,L_1)=(\zeta_3,\zeta_{3})$. Then $(M_1,L_1) \in \mathcal{S}_{\gamma,\delta}$ and the corresponding point in $\mathcal{B}_{\gamma,\delta}$ is given by $P=(m_1,\ell_1,n_1)=(-1,-1,-1)\in U_{\mathrm{\acute et}}$. Then $T=\frac{1}{2},H=\frac{1}{2}$ and any $\chi \in \phi^{-1}(P)$ will be the character inside $C_\gamma  \cap C_\delta$ of an ANI representation, with the negative valuation obtained from a 2-adic valuation. 

Moreover, an ANI character in $C_\gamma \cap C_\delta$ with negative 3-adic valuation can also be obtained by considering $\zeta_{18}=e^{\pi i/9}$ and $(M_1,L_1)=(\zeta_{18},\zeta_{18})$. It can be computed that the detected boundary slopes for both valuations in this case are 4(cf. Theorem \ref{boundary slopes are 0 or 4}). 
\end{example}

The two slopes 0 and 4 detected in the previous two examples in fact exhaust all the possibilities, as shown in the next theorem. 

\begin{theorem}\label{boundary slopes are 0 or 4}
Let $M$ be the Whitehead link complement in $\mathbb{S}^3$. Let $\gamma=\frac{p}{q},~ \delta=\frac{r}{s},~ ps-qr\neq 0$ be two distinct slopes on $T_1$. Suppose that $\chi \in C_{\gamma}\cap C_{\delta} \subset X_0(M)$ is the character of an ANI representation $\rho:\pi_1(M)\longrightarrow \mathrm{SL}_2(K)$
for some number field $K$, i.e. $\rho(\mu_1)^p\rho(\lambda_1)^q=\rho(\mu_1)^r\rho(\lambda_1)^s=I$. Let $v$ be a discrete valuation at which $v(y)<0$(cf. Proposition \ref{criterion for ANI}). Then the common boundary slope detected by $\chi,v$ is either $0$ or $4$. More precisely, writing $\kappa=\operatorname{tr}\rho([a,b]),$ the detected slope is
$$
\begin{cases}
0,& v(\kappa) \geq 0,\\
4,& v(\kappa)<0.
\end{cases}
$$
\end{theorem}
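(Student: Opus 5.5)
The plan is to evaluate the formula $-\frac{v(L_2)}{v(M_2)}$ from lemma \ref{slope is -v(l)/v(m)} directly on the second cusp $T_2$, using the trace formulas \eqref{trace formulas for m_i, l_i, n_i}. Remark \ref{v(g)<0 is enough} makes the lemma applicable as soon as $v(y)<0$. Fix a compatible pair of eigenvalues $(M_2,L_2)$ of $\rho(\mu_2),\rho(\lambda_2)$, enlarging $K$ if necessary so that they lie in $K$ and $v$ extends. Then
$$y=m_2=M_2+M_2^{-1},\qquad \ell_2=L_2+L_2^{-1},\qquad n_2=M_2L_2+(M_2L_2)^{-1}.$$
The basic ultrametric fact is this: if $v(t)\neq 0$ then $v(t+t^{-1})=-|v(t)|$, and if $v(t)=0$ then $v(t+t^{-1})\ge 0$. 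Since $v(y)<0$, after possibly swapping $M_2\leftrightarrow M_2^{-1}$ and simultaneously $L_2\leftrightarrow L_2^{-1}$ (which preserves compatibility), we may assume $v(M_2)=v(y)<0$.

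\textbf{Key inputs.} By lemma \ref{traces on T_1 are algebraic integers}, $x=m_1$ and $\ell_1$ are algebraic integers, so $v(x)\ge 0$ and $v(\ell_1)\ge 0$.

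\textbf{Case $v(\kappa)\ge 0$.} Here $\ell_2=\kappa^2-x^2\kappa+2x^2-2$ has $v(\ell_2)\ge 0$. By the basic fact this forces $v(L_2)=0$, so the slope is $0$.

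\textbf{Case $v(\kappa)<0$.}

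Step 1: pin down $v(\kappa)$ in terms of $v(y)$. Use the relation $\ell_1=\kappa^2+(2-\kappa)y^2-2$ from \eqref{equation for a fiber}, which is derived from \eqref{trace formulas for m_i, l_i, n_i} and holds on all of $X_0(M)$. The three potentially non-integral terms $\kappa^2$, $\kappa y^2$ and $2y^2$ have valuations $2v(\kappa)$, $v(\kappa)+2v(y)$ and $2v(y)$. Since $v(\kappa)<0$, the term $2y^2$ is strictly dominated by $\kappa y^2$. If $2v(y)<v(\kappa)$, then $\kappa y^2$ alone has the minimum valuation. If $2v(y)>v(\kappa)$, then $\kappa^2$ alone has the minimum valuation. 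Either way $v(\ell_1)<0$, a contradiction. Hence $v(\kappa)=2v(y)$.

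Step 2: get $|v(L_2)|$. Since $v(x)\ge 0$, the term $\kappa^2$ strictly dominates in $\ell_2$, so $v(\ell_2)=2v(\kappa)=4v(y)$. By the basic fact, $v(L_2)=\pm 4v(y)$.

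Step 3: fix the sign. From $z\kappa=xy$ we get $v(z)=v(x)+v(y)-v(\kappa)=v(x)-v(y)>0$. Then in $n_2=(y-xz)\kappa-y+2xz$ the term $y\kappa$ strictly dominates, so $v(n_2)=3v(y)$.

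If $v(L_2)=4v(M_2)$, then $v(M_2L_2)=5v(y)$ and we would get $v(n_2)=5v(y)$, a contradiction. Hence $v(L_2)=-4v(M_2)$, $v(M_2L_2)=-3v(y)$, and the slope is $-v(L_2)/v(M_2)=4$.

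\textbf{Main obstacle.} The main obstacle is Step 1: one must be sure that the relation between $\ell_1$ and $\kappa,y$ holds on all of $X_0(M)$ and not only over $U_{fin}$, and that no cancellation among the dominant terms invalidates the comparison. The sign determination via $n_2$ is the other delicate point, since $\ell_2$ alone cannot distinguish slope $4$ from $-4$.
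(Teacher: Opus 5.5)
Your proposal is correct and matches the paper's proof essentially step for step: the same case split on $v(\kappa)$, the relation $\ell_1=\kappa^2+(2-\kappa)y^2-2$ forcing $v(\kappa)=2v(y)$, the computation $v(\ell_2)=2v(\kappa)$, and the sign determination via $v(n_2)=3v(y)$. The only cosmetic difference is that you control the $xz$-terms in $n_2$ through $v(z)=v(x)-v(y)>0$, whereas the paper substitutes $z=xy/\kappa$ and factors out $y$.
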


\begin{proof}
From lemma \ref{slope is -v(l)/v(m)}, the boundary slope detected is of the form $-\frac{v(L_2)}{v(M_2)}$. Therefore, we need to find $v(L_2)$ and $v(M_2)$ respectively. Let $h:=v(y)<0$. Since $v(y)<0$, the matrix $\rho(\mu_2)=\rho(b)$ has distinct eigenvalues. Then, we may simultaneously diagonalize $\rho(\mu_2)$ and $\rho(\lambda_2)$, since they commute:
$$
\rho(\mu_2)=
\begin{pmatrix}
M_2&0\\
0&M_2^{-1}
\end{pmatrix},
\qquad
\rho(\lambda_2)=
\begin{pmatrix}
L_2&0\\
0&L_2^{-1}
\end{pmatrix}.
$$
Thus
\begin{equation}\label{eq:wh-eigenvalue-traces}
m_2=M_2+M_2^{-1},\qquad
\ell_2=L_2+L_{2}^{-1},\qquad
n_2=M_2L_2+(M_2L_2)^{-1}.
\end{equation}
For any nonzero $\xi$, if $v(\xi)\neq 0$, then
\begin{equation}
\label{eq:wh-trace-valuation}
v(\xi+\xi^{-1})=-|v(\xi)|.
\end{equation}
Indeed, the two summands have distinct valuations, so the
valuation of their sum is their minimum. Hence, we choose $M_2$ so that $v(M_2)=v(m_2)=v(y)=h<0.$

Suppose that $v(\kappa)\geq 0$, including $\kappa=0$. Since $x$ is integral, the formula $\ell_2=\kappa^2-x^2\kappa+2x^2-2$ shows that $v(\ell_2)\geq 0.$ It follows from \eqref{eq:wh-eigenvalue-traces} and \eqref{eq:wh-trace-valuation} that $v(L_2)=0.$ Indeed, a nonzero value of $v(L_2)$ would force $v(\ell_2)=-|v(L_2)|<0$. Therefore, the detected slope is 0.

Suppose now that $t:=v(\kappa)<0.$ Since $v(2)\geq 0$, we have $v(2-\kappa)=t.$
Consider the equation $\ell_1+2=\kappa^2+(2-\kappa)y^2$ obtained from \eqref{trace formulas for m_i, l_i, n_i}. Its left-hand side is integral since $v(\ell_1)\geq 0$. On the right, $v(\kappa^2)=2t<0,~v((2-\kappa)y^2)=t+2h<0.$ These valuations must be equal. Otherwise the right-hand side would have a unique lowest-valuation term, and hence negative valuation, contradicting the integrality of the left-hand side.
Thus $2t=t+2h,$ and therefore $h=\frac{t}{2}.$

Next, from \eqref{trace formulas for m_i, l_i, n_i}, $\ell_2=\kappa^2-x^2\kappa+2x^2-2$. $v(\kappa^2)=2t$ while $v(x^2\kappa)\geq t>2t$. The remaining terms are integral. Hence $\kappa^2$ is the unique lowest-valuation term, giving $v(\ell_2)=2t.$ To determine the compatible sign of $v(L_2)$, we also need the mixed trace $n_2$. Since $\kappa\neq 0$, we have $z=\frac{xy}{\kappa}.$ Substituting this into \eqref{trace formulas for m_i, l_i, n_i}, we obtain
$$
n_2=y\kappa-xz\kappa-y+2xz=y\left(\kappa-x^2-1+\frac{2x^2}{\kappa}\right).
$$
Inside the parentheses, the term $\kappa$ has valuation $t<0$.
All other terms are integral; in particular, $v\left(\frac{2x^2}{\kappa}\right)=v(2)+2v(x)-t\geq -t>0.$ Therefore $v\left(\kappa-x^2-1+\frac{2x^2}{\kappa}\right)=t.$ Using $h=\frac{t}{2}$, we conclude that
\begin{equation}
\label{eq:wh-U-value}
v(n_2)=h+t=\frac{3t}{2}.
\end{equation}

Since $v(\ell_2)=2t$, the longitude eigenvalue satisfies $v(L_2)\in\{2t,-2t\}.$ We have chosen $v(M_2)=h=\frac{t}{2}.$ If $v(L_2)=2t$, then $v(M_2L_2)=\frac{5t}{2}<0.$ Equation \eqref{eq:wh-eigenvalue-traces} would imply $v(n_2)=\min\left\{\frac{5t}{2},-\frac{5t}{2}\right\}=\frac{5t}{2},$ contradicting \eqref{eq:wh-U-value}. Hence the compatible eigenvalue must satisfy $v(M_2)=\frac{t}{2},~v(L_2)=-2t.$ Then the detected slope is given by $-\frac{v(L_2)}{v(M_2)}=4$.
\end{proof}

\begin{remark}
Although there are only two possibilities for the detected boundary slopes for Dehn fillings of the Whitehead link, there is no restriction on the possible primes that lead to non-integral valuations. For instance, in example \ref{example 2 of section 4}, both the 2-adic and 3-adic valuations give non-integral valuations at some characters, but the detected boundary slopes for both of them are 4. 
\end{remark}

\begin{remark}
The $\frac{1}{n}$-fillings on $T_1$ of the Whitehead link will give twist knots in $\mathbb{S}^3$. From \cite{HT1985}, the possible boundary slopes for twist knots can be computed using topological methods. Except for the unknot and the trefoil knot, every twist knot has three distinct boundary slopes: 0,4, and another one. This last boundary slope is distinct for different twist knots. Therefore, the common boundary slopes for twist knots will typically be 0 and 4, which is consistent with our findings(our result is more general in the sense that the filling slopes can be arbitrary). It is surprising that similar conclusions can be reached by two completely different methods(one topological and one arithmetic). This suggests that there is a deeper mathematical connection here that deserves further investigation.
\end{remark}

We end this section with the following observation, which states that for Whitehead link, only irreducible intersection characters can detect the (strict) common boundary slope 4. 

\begin{proposition}\label{reducible only detect 0}
Let $M$ be the Whitehead link complement in $\mathbb{S}^3$. Suppose that $\chi \in C_{\gamma}\cap C_{\delta}$ is the character of a reducible and ANI representation. Then the detected slope is exactly $0$. 
\end{proposition}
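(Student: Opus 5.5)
The plan is to reduce everything to Theorem \ref{boundary slopes are 0 or 4}. That theorem already tells us the detected slope is $0$ when $v(\kappa)\geq 0$ and $4$ when $v(\kappa)<0$. It therefore suffices to show that a reducible character forces $v(\kappa)\geq 0$ for every discrete valuation $v$.

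The key input is the criterion recalled in the proof of Lemma \ref{reducibility of fibers}: a character $\chi\in X_0(M)$ is reducible if and only if $\kappa=\operatorname{tr}\rho([a,b])=2$. A direct argument also works. A reducible $\rho$ can be conjugated to be upper triangular, so $\rho([a,b])$ is upper triangular with diagonal entries $1,1$, and hence has trace $2$. Then $v(\kappa)=v(2)\geq 0$ for every discrete valuation $v$ on the number field $K$ (note $\kappa\neq 0$, so $v(\kappa)$ is a genuine integer).

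With this in hand, let $v$ be a valuation with $v(y)<0$, which exists by Proposition \ref{criterion for ANI} and the ANI hypothesis. Since $v(\kappa)\geq 0$, we are in the first case of Theorem \ref{boundary slopes are 0 or 4}, so the detected slope is $0$.

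As a sanity check I will also verify the key step of that first case directly. Setting $\kappa=2$ in \eqref{trace formulas for m_i, l_i, n_i} gives $\ell_2=4-2x^2+2x^2-2=2$. Thus $v(\ell_2)\geq 0$, so $v(L_2)=0$ by \eqref{eq:wh-trace-valuation}, while $v(M_2)=v(y)<0$. Hence $-v(L_2)/v(M_2)=0$.

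The only point requiring care is whether the hypotheses of Theorem \ref{boundary slopes are 0 or 4} are genuinely satisfied for a reducible character. In particular, Lemma \ref{slope is -v(l)/v(m)} needs a simultaneous diagonalization of $\rho(\mu_2),\rho(\lambda_2)$, and that is still valid because $v(y)<0$ forces $\rho(\mu_2)$ to have distinct eigenvalues. There is also the question of whether the reducibility criterion $\kappa=2$ applies to the chosen representative. Since the trace of a commutator is determined by the character, any representative works, so I expect this step to be routine.
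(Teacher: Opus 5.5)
Your proof is correct, but it takes a different route from the paper's.

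\textbf{The paper's route.} The paper argues homologically. A reducible $\rho$ has the same character as a diagonal representation $g\mapsto\operatorname{diag}(\alpha(g),\alpha(g)^{-1})$, and $\alpha$ factors through $H_1(M;\mathbb{Z})\cong\mathbb{Z}^2$. Because the Whitehead link has linking number $0$, $[\lambda_2]=0$, so $L_2=1$ and $v(L_2)=0$. Meanwhile $v(M_2)\neq 0$ by the ANI hypothesis.

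\textbf{Your route.} You use the trace-level criterion $\kappa=\operatorname{tr}\rho([a,b])=2$. Then you either quote the case $v(\kappa)\geq 0$ of Theorem~\ref{boundary slopes are 0 or 4}, or substitute into $\ell_2=\kappa^2-x^2\kappa+2x^2-2$. That formula is a trace identity valid at every point of $X_0(M)$, so the substitution is legitimate. It gives $\ell_2=2$, hence $L_2=1$ exactly.

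\textbf{Comparison.} Both arguments establish the same fact: the identity $\ell_2|_{\kappa=2}\equiv 2$ is the trace-level form of $[\lambda_2]=0$. The paper's version needs no explicit trace polynomials and carries over verbatim to any two-component link with linking number zero. Yours stays within the Whitehead computations already done and makes the proposition an immediate corollary of Theorem~\ref{boundary slopes are 0 or 4}. It also avoids passing to a diagonal (semisimplified) representative.

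\textbf{A citation fix.} You take the valuation with $v(y)<0$ from Proposition~\ref{criterion for ANI}, but that proposition assumes $\phi(\chi)\in U_{fin}$, and this fails here. On $X_0(M)$, the relations $\kappa=2$ and $z\kappa=xy$ force $z=xy/2$ and $(x^2-4)(y^2-4)=0$. A reducible ANI character needs $y$ non-integral, so $x=\pm2$ and $\chi\in R_{++}\cup R_{-+}$, lying over $s_{++}$ or $s_{-+}\in\Sigma$.

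What you actually need is only the first paragraph of that proof. There, integrality of $x$ comes from Lemma~\ref{traces on T_1 are algebraic integers}, and integrality of $z$ over $\mathbb{Z}[x,y]$ comes from the monic relation $F=0$. Both hold for every $\chi\in C_\gamma\cap C_\delta$, so you should cite that argument (or the discussion preceding Proposition~\ref{criterion for ANI}) rather than the proposition as stated.
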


\begin{proof}
Let $\chi$ be the character of $\rho: \pi_1(M) \longrightarrow \operatorname{SL}(2,K)$ for some number field $K$. Since $\rho$ is reducible, we can assume $\rho$ is upper-triangular, and hence diagonal(setting the upper-right corner to be 0 won't change the character). Therefore, we write 
$$
\rho(g)=
\begin{pmatrix}
\alpha(g) & 0\\
0 & \alpha(g)^{-1}
\end{pmatrix},
$$
for some group homomorphism $\alpha: \pi_1(M) \longrightarrow K^{\times}$. Since $K^{\times}$ is abelian, $\alpha$ factors through the abelianization of $\pi_1(M)$, which is $H_1(M;\Z) \cong \Z^2$. This means that there is a group homorphism $\overline{\alpha}: H_1(M;\Z) \longrightarrow K^{\times}$ such that $\alpha(g)=\overline{\alpha}([g])$ for any $g\in \pi_1(M)$, where $[g] \in H_1(M;\Z)$ represents the homology class of $g$. Since the linking number for the Whitehead link(between the two cusps) is 0, $[\lambda_1]=[\lambda_2]=0 \in H_1(M;\Z)$. Therefore, $\alpha(\lambda_2)=1$ and hence $L_2=1$. So $v(L_2)=0$ for any discrete valuation $v$. On the other hand, since $\chi \in C_{\gamma}\cap C_{\delta}$ is ANI, the negative valuation must occur at the character of $\mu_2$ from our previous discussions. Therefore, $v(\operatorname{tr}(\rho(\mu_2))=v(M_2+M_2^{-1})<0$ and hence $v(M_2)\neq 0$. Then the detected slope is $-\frac{v(L_2)}{v(M_2)}=0$. 
\end{proof}

\section{Classification of reducible intersections}
Reducible and irreducible intersection characters in $C_\gamma\cap C_\delta \subset X_0(M)$ can exhibit very different properties, as shown in Proposition \ref{reducible only detect 0}. Therefore, it is useful to find a way to distinguish reducible intersection characters from irreducible ones. In this section, we classify reducible intersection characters using twisted Alexander polynomials. Note that $C_\gamma$ may share a common line with $C_\delta$, which consists of entirely reducible characters, as the line $R_{++}$ discussed in remark \ref{R_++ contained in all C_gamma}. These are not the reducible intersections we are interested in. We care about reducible intersection $\chi$ that arises from the intersection of $C_1\subset C_\gamma$ and $C_2\subset C_\delta$, where $C_1$ and $C_2$ are curves containing the character of an irreducible representations, e.g. the canonical curves of $X(M_\gamma)$ and $X(M_\delta)$. For these reducible characters, they are characters of reducible representations with non-abelian images(cf. \cite{Chin_Reid_Stover} Proposition 2.11). There is a close connection between such representations and (twisted) Alexander polynomials, cf. \cite{HeusenerPorti2005}, \cite{Miller}. 

Our main result in this section is to establish a link between the reducible intersections in $C_\gamma \cap C_{\delta}$ and the common roots of the corresponding twisted Alexander polynomials, which is stated in Proposition \ref{reducible intersection and common root}. As a corollary, we show that the canonical curves of twist knots, when embedded in $X_0(M)$ of the Whitehead link, can never intersect at certain reducible character(Corollary \ref{never intersect at reducible for whitehead}). We comment that the reason for using twisted Alexander polynomial is that when $\gamma=\frac{p}{q}$ for $p\neq 1$, the Dehn-filling manifold $M_\gamma$ is a knot complement in some lens space, and the first homology group may have torsions. Therefore, we need the twisted Alexander polynomial to capture the torsion information. In the following subsection, we begin with an introduction to group cohomology and twisted Alexander polynomials, which are main ingredients of our proofs.

\subsection{Preliminaries}

\subsubsection{group cohomology}\label{group cohomology}

We first give a short introduction to group cohomology and we refer the reader to \cite{Gille} $\S 3$ for more references. Let $G$ be a group and let $A$ be an abelian group equipped with a left action of $G$,
$$
G\times A\longrightarrow A,
\qquad
(g,a)\longmapsto g\cdot a,
$$
such that every $g\in G$ acts by an automorphism of $A$.
Equivalently, $A$ is a left module over the integral group ring $\Z G.$ The group cohomology of $G$ with coefficients in $A$ is denoted by $H^n(G;A).$ There are two standard definitions:
\begin{enumerate}
    \item the cochain definition using maps $G^n\to A$;
    \item the resolution definition using a free or projective
    resolution of the trivial $\Z G$-module $\Z$.
\end{enumerate}
These two definitions are naturally equivalent.

For the cochain definition, let $n\geq 0$, define $C^n(G;A)=\Map(G^n,A)$, which is the set of all maps from $G^n$ to $A$. In particular, since $G^0$ is a one-point set, $C^0(G;A)=A.$

The coboundary operator $\delta^n:C^n(G;A)\longrightarrow C^{n+1}(G;A)$ is defined by
$$
\begin{aligned}
(\delta^n f)(g_1,\ldots,g_{n+1})
={}&g_1\cdot f(g_2,\ldots,g_{n+1})\\
&+\sum_{i=1}^{n}(-1)^i
f(g_1,\ldots,g_i g_{i+1},\ldots,g_{n+1})\\
&+(-1)^{n+1}f(g_1,\ldots,g_n).
\end{aligned}
$$
It can be easily verified that $\delta^{n+1}\circ\delta^n=0$. Thus
$$
0\longrightarrow C^0(G;A)
\xrightarrow{\delta^0}
C^1(G;A)
\xrightarrow{\delta^1}
C^2(G;A)
\xrightarrow{\delta^2}\cdots
$$
is a cochain complex. Define $Z^n(G;A)=\ker\delta^n,~B^n(G;A)=\operatorname{im}\delta^{n-1}$. The $n$-th group cohomology is
$$
H^n(G;A)=Z^n(G;A)/B^n(G;A).
$$

When the degree is 0, $(\delta^0a)(g)=g\cdot a-a$ for $a\in A$. Therefore,
$$
H^0(G;A)
=
A^G
=
\{a\in A:g\cdot a=a\text{ for every }g\in G\}.
$$

When the degree is 1, a $1$-cochain is a map
$u:G\longrightarrow A$. Its coboundary is $(\delta^1u)(g,h)=g\cdot u(h)-u(gh)+u(g)$. Thus $u$ is a $1$-cocycle if and only if $u(gh)=u(g)+g\cdot u(h).$ A $1$-coboundary has the form $u(g)=g\cdot a-a$ for some $a\in A$. Hence
$$
H^1(G;A)
=
\frac{
\{u:G\to A:u(gh)=u(g)+g\cdot u(h)\}
}{
\{u(g)=g\cdot a-a:a\in A\}
}.
$$
If the action is trivial, then $H^1(G;A)\cong\Hom(G,A)$.

We can also define group cohomology using free(projective) resolutions. Regard $\Z$ as a trivial left $\Z G$-module. Choose a free(projective) resolution
$$
\cdots
\xrightarrow{\partial_3}P_2
\xrightarrow{\partial_2}P_1
\xrightarrow{\partial_1}P_0
\xrightarrow{\varepsilon}\Z
\longrightarrow0,
$$
where every $P_n$ is a free(projective) left $\Z G$-module and the chain is exact at each $P_n$ and $\Z$. Applying the functor $\Hom_{\Z G}(-,A)$
produces a cochain complex
$$
0\longrightarrow
\Hom_{\Z G}(P_0,A)
\xrightarrow{\delta^0}
\Hom_{\Z G}(P_1,A)
\xrightarrow{\delta^1}
\Hom_{\Z G}(P_2,A)
\longrightarrow\cdots,
$$
where $\delta^n(\varphi)=\varphi\circ\partial_{n+1}.$ Then group cohomology is defined by
$$
H^n(G;A)
=
H^n\!\left(\Hom_{\Z G}(P_*,A)\right).
$$

Using the bar resolution, it can be shown that the two definitions of group cohomology are in fact equivalent. 

\begin{theorem}[Equivalence of the two definitions]
Let $G$ be a group and let $A$ be a left $\Z G$-module. Let $P_*\longrightarrow\Z$
be any projective, and in particular any free, resolution of the trivial $\Z G$-module $\Z$. Then the cochain definition and the free-resolution definition(using $P_{*}$) determine the same group cohomology groups.
\end{theorem}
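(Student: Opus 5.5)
The approach is the standard one: show that both definitions compute the cohomology of $\Hom_{\Z G}(P_*,A)$ for \emph{some} projective resolution, and then show that this cohomology does not depend on which projective resolution is chosen. The argument therefore splits into two parts.

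\textbf{Step 1 (independence of the resolution).} Let $P_*\to\Z$ and $Q_*\to\Z$ be projective resolutions of the trivial module. By the comparison theorem, the identity of $\Z$ lifts to chain maps $f:P_*\to Q_*$ and $g:Q_*\to P_*$. The construction is inductive. Projectivity of $P_n$ lets us lift through the surjection onto the image of $\partial_n^Q$, and exactness of $Q_*$ guarantees that the required map lands in that image.

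The same inductive argument, applied to $g\circ f-\mathrm{id}$ (which lifts the zero map on $\Z$), produces a chain homotopy $g\circ f\simeq \mathrm{id}_{P_*}$; similarly $f\circ g\simeq\mathrm{id}_{Q_*}$. The functor $\Hom_{\Z G}(-,A)$ is additive, so it preserves chain homotopies. Hence $f^*$ and $g^*$ are mutually inverse isomorphisms on cohomology, and these isomorphisms are canonical.

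\textbf{Step 2 (the bar resolution realizes the cochain complex).} Let $B_n$ be the free $\Z G$-module on symbols $[g_1|\cdots|g_n]$, with $B_0=\Z G$ generated by $[\,]$, and augmentation $\varepsilon(g)=1$. Define
\[
\partial[g_1|\cdots|g_n]=g_1[g_2|\cdots|g_n]+\sum_{i=1}^{n-1}(-1)^i[g_1|\cdots|g_ig_{i+1}|\cdots|g_n]+(-1)^n[g_1|\cdots|g_{n-1}].
\]
A $\Z G$-homomorphism $B_n\to A$ is determined by its values on the free generators. This gives an identification $\Hom_{\Z G}(B_n,A)\cong\Map(G^n,A)=C^n(G;A)$. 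Under this identification, $\varphi\mapsto\varphi\circ\partial_{n+1}$ is exactly the coboundary $\delta^n$ written earlier; this is checked term by term, where the first term uses $G$-linearity to produce $g_1\cdot f(\dots)$. The cochain complex is therefore literally $\Hom_{\Z G}(B_*,A)$, and combined with Step 1 this proves the theorem.

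\textbf{Main obstacle: exactness of the bar complex.} It remains to show that $B_*\to\Z\to0$ is exact. I would write down a contracting homotopy that is $\Z$-linear but not $G$-linear:
\[
s_{-1}(1)=[\,],\qquad s_n\bigl(g[g_1|\cdots|g_n]\bigr)=[g|g_1|\cdots|g_n].
\]
Then verify the identities $\partial s+s\partial=\mathrm{id}$ in positive degrees and $\varepsilon s_{-1}=\mathrm{id}$. These show that the augmented complex is contractible as a complex of abelian groups, hence exact. The computation is a bookkeeping check of signs: the first face of $\partial s_n$ cancels the identity term, and the remaining faces match $-s\partial$.

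A secondary point to watch is the relation $\partial\circ\partial=0$. It follows from the same homotopy identity by induction, or directly from the familiar simplicial-face cancellations.
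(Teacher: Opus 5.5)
Your proposal is correct and takes the route the paper indicates. The paper gives no argument beyond the remark that the equivalence follows from the bar resolution. Your comparison-theorem step, the identification $\Hom_{\Z G}(B_n,A)\cong\Map(G^n,A)$ matching the paper's coboundary formula, and the $\Z$-linear contracting homotopy for exactness supply exactly the details it leaves out.
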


\subsubsection{Alexander polynomial and twisted Alexander polynomial}\label{twisted Alexander module and polynomials}
We now introduce the notions of (ordinary) Alexander polynomial and twisted Alexander polynomial. \cite{Zieschang} $\S 8$ provides a comprehensive discussion of Alexander polynomial. For more references on the twisted one, the reader may refer to \cite{HeusenerPorti2005} $\S 2$. Let $\Lambda=\mathbb C[t,t^{-1}].$ This is a principal ideal domain, and its units are $\Lambda^\times=\{ct^k : c\in\mathbb C^\times,\ k\in\mathbb Z\}.$ Let $M$ be a finitely generated $\Lambda$-module. By the structure theorem for finitely generated modules over a PID, there is a decomposition $M\cong\Lambda^r\oplus\bigoplus_{j=1}^s \Lambda/(f_j(t))$, and one may choose the invariant factors so that $f_1(t)\mid f_2(t)\mid\cdots\mid f_s(t).$ The module $M$ is called a torsion $\Lambda$-module if and only if $r=0$. Throughout the paper, we always assume the $\Lambda$-module in consideration is torsion. 

If $M$ is torsion, its order is defined by
$$
\operatorname{ord}_{\Lambda}(M)
\doteq
\prod_{j=1}^s f_j(t),
$$
where $\doteq$ denotes equality up to multiplication by a unit
$ct^k\in\Lambda^\times$. In particular, $\operatorname{ord}_{\Lambda}(0)=1.$

Equivalently, if $M$ admits a finite presentation
$$
\Lambda^m
\xrightarrow{A(t)}
\Lambda^n
\longrightarrow
M
\longrightarrow 0,
$$
then the zeroth fitting ideal $\operatorname{Fitt}_0(M)$ is generated
by the $n\times n$ minors of $A(t)$. Since $\Lambda$ is a PID, this
ideal is principal, and its generator, defined up to a unit, is
$\operatorname{ord}_{\Lambda}(M)$.

Now, let $X$ be a compact orientable three-manifold with torus boundary
such that $H_1(X;\Z)\cong \Z\oplus T,$ where $T$ is finite. We fix $\phi:\pi_1(X)\longrightarrow H_1(X;\Z)/T\cong\Z$ to be the homomorphism onto the free part. The associated infinite cyclic cover is denoted $X_\phi\longrightarrow X.$ The module $H_1(X_\phi;\C)$ is naturally a module over $\Lambda=\C[t^{\pm1}]$, where the action of $t$ is induced by the deck transformation. 

\begin{definition}\label{def of ordinary Alexander}
The $\Lambda$-module $H_1(X_\phi;\C)$ is called the ordinary Alexander module of $X$, and the ordinary Alexander polynomial of $X$, denoted $\Delta_{X}^{1}(t)$, is defined to be the order of $H_1(X_\phi;\C)$.  
\end{definition}

The ordinary Alexander polynomial forgets the finite group $T$. To retain the torsion information, fix a splitting $H_1(X;\Z)\cong\Z\oplus T$, and write the homology class of \(g\in\pi_1(X)\) as $[g]=\bigl(\phi(g),\tau(g)\bigr) \in\Z\oplus T$. Let $\sigma:T\longrightarrow\C^{\times}$ be a character. Let $G=\pi_1(X)$ and define $\Phi_\sigma:\C G\longrightarrow\C[t^{\pm1}]$ by
$$
\Phi_\sigma(g)=\sigma(\tau(g))t^{\phi(g)}.
$$
where $\C G$ is the group ring of $G$ with coefficients in $\C$ and $\Phi_\sigma$ extends linearly over $\C$. We define $\Lambda_{\sigma}$ to be $\Lambda_{\sigma}=\Lambda=\C [t,t^{-1}]$, regarded as a right $\mathbb CG$-module by $a\cdot g=a\Phi_{\sigma}(g)$. Note that $\Lambda_{\sigma}$ is also naturally equipped with a left $\Lambda$-action by self-multiplication. 

To see how the ordinary Alexander module(polynomial) can be generalized to incorporate torsion information, we first make the following observation when $\sigma$ is trivial. For a CW complex $X$, We use $C_*(X;-)$ to denote its standard chain complex and $\widetilde{X}$ its universal cover. 

\begin{lemma}\label{relation between chain complex of X_phi and tilde(X)}
Let $\sigma=1$ and regard $\Lambda$ as a right $\C G$-module by $a\cdot g=a\,t^{\phi(g)},~ a\in\Lambda,~ g\in G$. Then there is a natural isomorphism of chain complexes of left $\Lambda$-modules 
$$
\boxed{C_*(X_\phi;\C) \cong \Lambda\otimes_{\C G}C_*(\widetilde X;\C).}
$$
\end{lemma}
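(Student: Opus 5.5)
The plan is to identify $X_\phi$ with the quotient $\widetilde X/K$, where $K=\ker\phi\subset G$, and then compare cellular chains level by level. Lift the CW structure of $X$ to $\widetilde X$; then $G$ acts freely on the cells by deck transformations. Fix one lift $\tilde e$ of each cell $e$ of $X$. This makes $C_k(\widetilde X;\C)$ a free left $\C G$-module with basis $\{\tilde e\}$, one generator for each $k$-cell of $X$. In the same way, $X_\phi=\widetilde X/K$ inherits a CW structure whose $k$-cells are the $K$-orbits of cells of $\widetilde X$. The deck group of $X_\phi\to X$ is $G/K\cong\Z$, generated by $t$. Hence $C_k(X_\phi;\C)$ is a free left $\Lambda$-module with basis the images $\bar e$ of the chosen lifts.

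\textbf{Step 1.} Define the map on generators by
\[
\Psi:\Lambda\otimes_{\C G}C_k(\widetilde X;\C)\longrightarrow C_k(X_\phi;\C),\qquad a\otimes \sigma\longmapsto a\cdot \pi_*(\sigma),
\]
where $\pi:\widetilde X\to X_\phi$ is the covering projection. I need to check that $\Psi$ is well defined, meaning it respects the balancing relation $a\cdot g\otimes \sigma=a\otimes g\sigma$. This holds because $\pi_*(g\sigma)=t^{\phi(g)}\pi_*(\sigma)$: the deck transformation $g$ descends to the action of its image $t^{\phi(g)}$ in $G/K$. The map is clearly left $\Lambda$-linear.

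\textbf{Step 2.} Show that $\Psi$ is an isomorphism in each degree. Since $C_k(\widetilde X;\C)=\bigoplus_e \C G\,\tilde e$, we get $\Lambda\otimes_{\C G}C_k(\widetilde X;\C)\cong\bigoplus_e\Lambda\otimes_{\C G}\C G\cong\bigoplus_e \Lambda$, with basis $1\otimes\tilde e$. The map $\Psi$ sends $1\otimes\tilde e$ to $\bar e$, so it carries one $\Lambda$-basis onto another and is bijective.

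\textbf{Step 3.} Check compatibility with the boundary maps, where the tensored complex carries $\mathrm{id}\otimes\partial$. The cellular boundary on $\widetilde X$ has the form $\partial\tilde e=\sum_j c_j\,\tilde f_j$ with $c_j\in\C G$, given by incidence numbers twisted by group elements. Because $\pi$ is a cellular covering map, $\pi_*\partial=\partial\,\pi_*$. Together with $\pi_*(g\sigma)=t^{\phi(g)}\pi_*\sigma$, this gives
\[
\partial\bar e=\sum_j\Phi(c_j)\bar f_j=\Psi\bigl(1\otimes\partial\tilde e\bigr).
\]
Naturality follows because no choices beyond the lifts were made. A different choice of lift changes both bases by the same unit $g$, respectively $t^{\phi(g)}$, so $\Psi$ itself does not depend on the choice.

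The main point needing care is the identification of $X_\phi$ with $\widetilde X/\ker\phi$ and of its deck group with $\Z$ acting through $t$. This is where the hypothesis that $\phi$ is surjective onto the free part is used. Once that is in place, the remaining steps are formal.
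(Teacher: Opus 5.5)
Your proposal is correct and follows essentially the same route as the paper. Both use the map $a\otimes c\mapsto a\,\pi_*(c)$, verify the balancing relation via $\pi_*(gc)=t^{\phi(g)}\pi_*(c)$, check compatibility with the boundary maps, and get bijectivity from the fact that the lifts of each cell of $X$ form a free $\Z$-orbit. The only difference is that you make the basis comparison $\Lambda\otimes_{\C G}\C G\cong\Lambda$ explicit.
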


\begin{proof}
Let $q:\widetilde X\longrightarrow X_\phi$ be the quotient map. If $c$ is a cellular chain in $\widetilde X$, write $\overline c=q_\#(c).$ With our convention for the generator $t$, we have $\overline{gc}=t^{\phi(g)}\overline c.$ Define $\Psi_n: \Lambda\otimes_{\C G}C_n(\widetilde X;\C) \longrightarrow C_n(X_\phi;\C)$ by $\Psi_n(a\otimes c)=a\overline c.$ The balancing relation in the tensor product is $(a\cdot g)\otimes c=a\otimes gc.$ Now,
$$
\Psi_n((a\cdot g)\otimes c) = \Psi_n(at^{\phi(g)}\otimes c) = at^{\phi(g)}\overline c,
$$
whereas $\Psi_n(a\otimes gc)=a\overline{gc}=at^{\phi(g)}\overline c.$ Thus $\Psi_n$ is well-defined. It also commutes with the boundary operators:
$$
\partial\Psi_n(a\otimes c) = \partial(a\overline c) = a\,\partial\overline c
= a\,\overline{\partial c} = \Psi_{n-1}(a\otimes\partial c).
$$
Moreover, the lifts of every cell of $X$ to $X_\phi$ form a free
orbit under $\Z=\langle t\rangle$. Therefore, $\Psi_n$ is an
isomorphism for every $n$, proving that $C_*(X_\phi;\C) \cong \Lambda\otimes_{\C G}C_*(\widetilde X;\C).$ 
\end{proof}

Now let $\sigma$ be a (possibly)nontrivial character of $T$. We define $C_*^{\sigma}(X_\phi;\mathbb C):=\Lambda_{\sigma}\otimes_{\mathbb CG} C_*(\widetilde X;\mathbb C)$, which is a chain of left $\Lambda$-modules whose $\Lambda$-action is induced by self-multiplication on $\Lambda_\sigma$. Then we have the following definition. 

\begin{definition}\label{def of twisted Alexander}
the $\sigma$-twisted Alexander module of $X$ is defined by $A_X^{\sigma}:=H_1\bigl(C_*^{\sigma}(X_\phi;\mathbb C)\bigr)$, and the $\sigma$-twisted Alexander polynomial of $X$ is defined by $\Delta_{X}^{\sigma}(t) \doteq\operatorname{ord}_{\Lambda}(A_X^\sigma).$
\end{definition}

From Lemma \ref{relation between chain complex of X_phi and tilde(X)}, it is clear that $A_{X}^{1}$ is the ordinary Alexander module and hence the definitions are natural generalizations of those given in Definition \ref{def of ordinary Alexander}.

\begin{remark}
There are other conventions when define twisted Alexander polynomials such as the use of Wada's normalization or Reidemeister torsion. Such invariants may differ from our definition by certain factors, but they typically won't affect the zero set of the twisted Alexander polynomial. Since we are mainly interested in the zero set of the Alexander polynomial, we shall stick to our definitions without any ambiguity. 
\end{remark}

Finally, we introduce the Alexander polynomial for link complement in $\mathbb{S}^3$ with two boundary components. In this case, we still consider the infinite cyclic cover of $X_\phi$ of $X$ and define the Alexander module as $A_X=H_1(X_{\phi};\C)$. But now $\Lambda=\C[t_1^{\pm 1},t_2^{\pm 1}]$ is not a PID and hence the structure theorem for PID can not be applied. Therefore we consider the finite presentation $\Lambda^m\xrightarrow{A(t)} \Lambda^n \longrightarrow A_X \longrightarrow 0$. The zeroth fitting ideal $\operatorname{Fitt}_0(A_X)$ is now of the form $\operatorname{Fitt}_0(A_X)=(f_1,f_2,...,f_k)$. We define the Alexander polynomial of $X$ to be the gcd of the generators. In summary, we have 
\begin{definition}\label{def of Alexander for 2-components}
Let $X$ be a link complement in $\mathbb{S}^3$ with two boundary components. The The Alexander module of $X$ is the $\Lambda$-module $H_1(X_\phi;\C)$, and the Alexander polynomial of $X$, denoted $\Delta_{X}(t_1,t_2)$, is defined to be $
\Delta_{X}(t_1,t_2)\doteq \operatorname{gcd}(f_1,f_2,...,f_k)$. 
\end{definition}

\begin{remark}\label{Alexander poly for 2-bridge link}
If $X$ is a 2-bridge link complement in $\mathbb{S}^3$. It can be shown that $\operatorname{Fitt}_0(A_X)=(\Delta_X)$ is principal and $A_X\cong\Lambda/(\Delta_X)$. 
\end{remark}

\subsection{Classification of reducible intersections using twisted Alexander polynomials}
In this subsection, we derive the Burde--de Rham criterion(Theorem \ref{twisted Burde de Rham theorem}) for knot exteriors in lens spaces, which establishes the connection between reducible representations with nonabelian images and the roots of twisted Alexander polynomials. The key idea of the proof is that we use certain cohomology group as an intermediary to connect the two, cf. Theorem \ref{Cohomological criterion} and \ref{equivalence of H^1 and twisted Alexander}. We then generalize the result to 2-cusp link exterior in $\mathbb{S}^3$, i.e. Theorem \ref{Burde de Rham theorem for 2-bridge link}. Finally, we apply the criterion to reducible intersections in $C_\gamma\cap C_\delta$ to derive Proposition \ref{reducible intersection and common root} and Corollary \ref{never intersect at reducible for whitehead}. 

We comment that similar results of the twisted version of Burde-de Rham criterion have been obtained in \cite{HeusenerPorti2005}, \cite{Miller}, where the proofs rely on extensive computations involving Fox calculus. We present an alternative proof that makes greater use of techniques from algebraic topology(Tensor-Hom adjunction, Universal Coefficient Theorem etc.). We hope that this proof can provide a clearer perspective for understanding this theorem from the viewpoint of topology. Moreover, our proof can be easily generalized to obtain the 2-cusp version of the Burde-de Rham criterion. 

We begin with some general facts about group cohomology. Let $G$ be an arbitrary group, and let $\chi:G\longrightarrow \C^\times$
be a character, i.e. a group homomorphism. Denote by $\C_\chi$ the one-dimensional
$G$-module $\C$ whose action is given by $g\cdot z=\chi(g)z$. From the definitions given in section \ref{group cohomology}, a $1$-cocycle with coefficients in $\C_\chi$ is a map $u:G\longrightarrow \C$
satisfying $u(gh)=u(g)+\chi(g)u(h)$ and a $1$-coboundary is a cocycle of
the form $u(g)=c\bigl(1-\chi(g)\bigr)$ for some $c\in\C$. Thus, $H^1(G;\C_\chi)\neq 0$ means that there exists a $1$-cocycle which is not a
coboundary.

Now suppose that $\rho:G\longrightarrow \operatorname{SL}(2,\C)$ is a reducible representation. Then $\rho(G)$ preserves a one-dimensional subspace of
$\C^2$. After choosing a basis adapted to that invariant line, we may
write
$$
\rho(g)=
\begin{pmatrix}
\alpha(g) & b(g)\\
0 & \alpha(g)^{-1}
\end{pmatrix},
$$
where $\alpha:G\longrightarrow\C^\times,~b: G\longrightarrow\C$ are maps. Let $u(g)=\alpha(g)b(g)$. To ensure that $\rho$ is a group homomorphsim, we need the following lemma. 

\begin{lemma}\label{condition for rho to be group homo}
The map \(\rho\) is a group homomorphism if and only if $\alpha$ is a character and $u(gh)=u(g)+\alpha(g)^2u(h)$.
\end{lemma}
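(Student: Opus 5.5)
The plan is a direct matrix multiplication followed by a comparison of entries. For $g,h\in G$ we compute
$$
\rho(g)\rho(h)=
\begin{pmatrix}
\alpha(g)\alpha(h) & \alpha(g)b(h)+b(g)\alpha(h)^{-1}\\
0 & \alpha(g)^{-1}\alpha(h)^{-1}
\end{pmatrix},
$$
and compare it with $\rho(gh)$. Since an upper-triangular form has been fixed, $\rho$ is a homomorphism if and only if $\rho(gh)=\rho(g)\rho(h)$ for all $g,h$. This holds if and only if two conditions hold for all $g,h$: the diagonal entries agree, and the upper-right entries agree. The diagonal condition reads $\alpha(gh)=\alpha(g)\alpha(h)$; the lower-right entry then agrees automatically, since it is the inverse of the upper-left one. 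The off-diagonal condition is $b(gh)=\alpha(g)b(h)+b(g)\alpha(h)^{-1}$.

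The diagonal condition says exactly that $\alpha$ is a character. It remains to translate the off-diagonal condition into the stated cocycle condition, assuming $\alpha$ is multiplicative. We multiply the off-diagonal identity by $\alpha(gh)=\alpha(g)\alpha(h)$, which is nonzero. The left side becomes $\alpha(gh)b(gh)=u(gh)$. On the right, the first term becomes $\alpha(g)^2\alpha(h)b(h)=\alpha(g)^2u(h)$, and the second becomes $\alpha(g)b(g)=u(g)$. This gives $u(gh)=u(g)+\alpha(g)^2u(h)$.

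Conversely, dividing the cocycle identity by $\alpha(gh)$ recovers the entry condition. The step is reversible precisely because $\alpha$ takes values in $\C^\times$.

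One point of logical order needs care: the entry-wise translation uses multiplicativity of $\alpha$. So in the forward direction we first extract that $\alpha$ is a character from the diagonal entries, and only then rewrite the off-diagonal condition. In the reverse direction both hypotheses are given, so the matrix identity follows entry by entry.

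I expect no real obstacle. The only points to check are this ordering and the fact that $\alpha(g)\neq 0$, which is needed to divide. In the language of \S\ref{group cohomology}, the conclusion says that $u$ is a $1$-cocycle with coefficients in $\C_{\alpha^2}$, which is how the lemma will be used afterwards.
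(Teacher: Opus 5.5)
Your proposal is correct and follows essentially the same route as the paper: compute $\rho(g)\rho(h)$, compare it with $\rho(gh)$ entry by entry, read off multiplicativity of $\alpha$ from the diagonal, and convert the upper-right condition into the cocycle identity using $u=\alpha b$ and $\alpha(gh)=\alpha(g)\alpha(h)$. The only difference is cosmetic: the paper substitutes $b=u\alpha^{-1}$ before multiplying the matrices, whereas you work with $b$ and multiply through by $\alpha(gh)$ afterwards, which also makes the order of the logical steps more explicit.
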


\begin{proof}
With the substitution $u(g)=\alpha(g)b(g)$, we clearly have 
$$
\rho(gh)=
\begin{pmatrix}
\alpha(gh) & u(gh)\alpha(gh)^{-1}\\
0 & \alpha(gh)^{-1}
\end{pmatrix}.
$$

Then by direct computation, we get 
$$
\rho(g)\rho(h)
=
\begin{pmatrix}
\alpha(g)\alpha(h) &
\alpha(g)u(h)\alpha(h)^{-1}
+
u(g)\alpha(g)^{-1}\alpha(h)^{-1}\\
0 & \alpha(g)^{-1}\alpha(h)^{-1}
\end{pmatrix}.
$$
Equalize the two matrices entry by entry, it can be verified that $\rho$ is a group homomorphism if and only if $\alpha$ is a character and $u(gh)=\alpha(g)^2 u(h) + u(g)$.
\end{proof}

From Lemma \ref{condition for rho to be group homo} and the condition that $\rho$ is a group homomorphism, we have $u(gh)=u(g)+\alpha(g)^2u(h)$. This means precisely that $u$ is a \(1\)-cocycle with coefficients in $\C_{\alpha^2}$ where the action on $\C_{\alpha^2}$ is $g\cdot z=\alpha(g)^2z$, i.e. $u\in Z^1(G;\C_{\alpha^2})$. Then we have the following, which is Lemma 5.1 in \cite{Miller}. We present the proof here for completeness.

\begin{theorem}\label{Cohomological criterion}
Let $\alpha:G\longrightarrow\C^\times$ be a character. Then there exists a reducible representation $\rho:G\longrightarrow SL(2,\C)$ with nonabelian image and diagonal characters $\alpha$ and $\alpha^{-1}$ if and only if $\alpha^2\neq 1$
and $H^1(G;\C_{\alpha^2})\neq 0.$
\end{theorem}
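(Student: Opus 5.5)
The plan is to build everything on Lemma \ref{condition for rho to be group homo}, which identifies upper-triangular representations with diagonal $(\alpha,\alpha^{-1})$ with pairs consisting of the character $\alpha$ and a cocycle $u\in Z^1(G;\C_{\alpha^2})$, via $b(g)=\alpha(g)^{-1}u(g)$. Both implications then reduce to one question: when is the image of such a $\rho$ abelian? The key auxiliary claim I would prove first is:
\begin{center}
$\rho$ has abelian image $\iff$ $u$ is a coboundary in $B^1(G;\C_{\alpha^2})$ or $\alpha^2\equiv 1$ on $G$.
\end{center}

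\emph{Abelian image in the coboundary case.} Suppose $u(g)=c(1-\alpha(g)^2)$. Conjugate $\rho$ by the unipotent matrix $P=\begin{pmatrix}1&c'\\0&1\end{pmatrix}$ with a suitable constant $c'$. A direct computation shows the upper-right entry of $P\rho(g)P^{-1}$ is
$$
b(g)+c'\bigl(\alpha(g)^{-1}-\alpha(g)\bigr)=\alpha(g)^{-1}\bigl(u(g)+c'(1-\alpha(g)^2)\bigr).
$$
Choosing $c'=-c$ makes this vanish for all $g$. So $\rho$ is conjugate to a diagonal representation, and its image is abelian.

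\emph{Abelian image in the case $\alpha^2\equiv 1$.} The cocycle condition becomes $u(gh)=u(g)+u(h)$, so $u$ is a homomorphism. Then each $\rho(g)=\alpha(g)\begin{pmatrix}1&u(g)\\0&1\end{pmatrix}$, since $\alpha(g)=\alpha(g)^{-1}=\pm1$. Matrices of this form commute, so the image is abelian.

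\emph{Converse.} Suppose the image is abelian and $\alpha^2\not\equiv1$. Pick $g_0$ with $\alpha(g_0)^2\neq1$. Commutativity gives $\rho(g)\rho(g_0)=\rho(g_0)\rho(g)$, and comparing upper-right entries, equivalently comparing the two expansions of $u(gg_0)=u(g_0g)$ via the cocycle identity, yields
$$
u(g)\bigl(1-\alpha(g_0)^2\bigr)=u(g_0)\bigl(1-\alpha(g)^2\bigr).
$$
Hence $u(g)=c(1-\alpha(g)^2)$ with $c=u(g_0)/(1-\alpha(g_0)^2)$, so $u$ is a coboundary.

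With the claim in hand, the theorem follows.
\begin{itemize}
\item[($\Leftarrow$)] Given $\alpha^2\neq1$ and $H^1(G;\C_{\alpha^2})\neq0$, choose a non-coboundary cocycle $u$ and define $\rho$ through Lemma \ref{condition for rho to be group homo}. This $\rho$ is a reducible homomorphism into $SL(2,\C)$ with diagonal characters $\alpha,\alpha^{-1}$. By the claim its image is nonabelian.
\item[($\Rightarrow$)] Given such a $\rho$ with nonabelian image, the claim forces $\alpha^2\neq1$ and $u\notin B^1$. Hence $H^1(G;\C_{\alpha^2})\neq0$.
\end{itemize}

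The only delicate step is the converse direction of the claim. It needs the existence of $g_0$ with $\alpha(g_0)^2\neq1$, which is exactly the hypothesis $\alpha^2\neq1$, and everything else is routine matrix bookkeeping. I would also note explicitly that ``$\alpha^2\neq1$'' means $\alpha^2$ is not the trivial character.
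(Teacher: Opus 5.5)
Your proposal is correct and follows the paper's proof essentially step for step. In the coboundary case you conjugate by a unipotent $\begin{pmatrix}1&c\\0&1\end{pmatrix}$; when $\alpha^2\equiv1$ you note that $\rho(g)=\pm\begin{pmatrix}1&u(g)\\0&1\end{pmatrix}$; and in the converse you extract $c=u(g_0)/(1-\alpha(g_0)^2)$ from the commutation relation.

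As in the paper, one step is left implicit: a given reducible $\rho$ must first be conjugated so that its invariant line carries $\alpha$ rather than $\alpha^{-1}$. That ordered reading is what ``diagonal characters $\alpha,\alpha^{-1}$'' has to mean here. For a general group, $H^1(G;\C_{\alpha^{2}})$ and $H^1(G;\C_{\alpha^{-2}})$ need not vanish together, so the unordered reading would make the statement false.
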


\begin{proof}
Let $u\in Z^1(G;\C_{\alpha^2}),$ so that $u(gh)=u(g)+\alpha(g)^2u(h).$ Define
$$
\rho_u(g)=
\begin{pmatrix}
\alpha(g) & u(g)\alpha(g)^{-1}\\
0 & \alpha(g)^{-1}
\end{pmatrix}.
$$
By Lemma \ref{condition for rho to be group homo}, $\rho_u$ is a reducible $SL(2,\C)$-representation. After a suitable conjugation, we may assume that any reducible representation $\rho$ is of the form $\rho=\rho_u$ for some $u$. Suppose next that $u$ is a coboundary, so that $u(g)=c\bigl(1-\alpha(g)^2\bigr)$
for some $c\in\C$. Let
$$
P_c=
\begin{pmatrix}
1 & c\\
0 & 1
\end{pmatrix}.
$$
A direct calculation gives
$$
P_c
\begin{pmatrix}
\alpha(g) & 0\\
0 & \alpha(g)^{-1}
\end{pmatrix}
P_c^{-1}
=
\begin{pmatrix}
\alpha(g) &
c\bigl(1-\alpha(g)^2\bigr)\alpha(g)^{-1}\\
0 & \alpha(g)^{-1}
\end{pmatrix}.
$$
The right-hand side is $\rho_u(g)$. Hence $\rho_u$ is conjugate
to the diagonal representation
$$
\rho_\alpha(g)=
\begin{pmatrix}
\alpha(g) & 0\\
0 & \alpha(g)^{-1}
\end{pmatrix}.
$$
In particular, $[u]=0$ implies that $\rho_u(G)$ is abelian.

Conversely, suppose that $\alpha^2\neq 1$ and that $\rho_u(G)$ is abelian. For all $g,h\in G$, we have $\rho_u(g)\rho_u(h)=\rho_u(h)\rho_u(g).$ Directly compute both sides and Compare the upper-right entries, we have 
$$
u(g)+\alpha(g)^2u(h)
=
u(h)+\alpha(h)^2u(g).
$$
Hence $\bigl(1-\alpha(h)^2\bigr)u(g)=\bigl(1-\alpha(g)^2\bigr)u(h).$ Since $\alpha^2\neq 1$, there is $h_0\in G$ such that
$\alpha(h_0)^2\neq 1.$ Set $c=\frac{u(h_0)}{1-\alpha(h_0)^2}.$
Taking $h=h_0$ in the preceding identity gives $u(g)=c\bigl(1-\alpha(g)^2\bigr)$
for all $g\in G$. Thus $u$ is a coboundary, and therefore $[u]=0\in H^1(G;\C_{\alpha^2}).$
It follows that, when $\alpha^2\neq 1$, $\rho_u(G)$ is nonabelian if and only if $[u]\neq 0.$

Finally, suppose that $\alpha^2=1$. Then the cocycle condition becomes $u(gh)=u(g)+u(h),$ and
$$
\rho_u(g)
=
\alpha(g)
\begin{pmatrix}
1 & u(g)\\
0 & 1
\end{pmatrix}.
$$
Scalar matrices commute with all matrices, and the upper-triangular
unipotent matrices commute with one another. Hence $\rho_u(G)$ is
always abelian when $\alpha^2=1$. This proves the theorem.
\end{proof}

From Theorem \ref{Cohomological criterion}, to study the reducible representations of $G$ into $\operatorname{SL}(2,\C)$ with nonabelian image, it suffices to study characters $\alpha: G \to \C^{\times}$ with $\alpha^2 \neq 1$ and $H^1(G;\C_{\alpha^2}) \neq 0$. We now restrict our attention to the case where $G=\pi_1(X)$ for some 3-manifold $X$. When $G=\pi_1(X)$ for some CW complex $X$, the group cohomology of $G$ is closely related to the simplicial cohomology of the universal cover $\widetilde{X}$ of $X$, where algebraic topology techniques can be used for theoretical and computational purposes. The main result connecting the two types of cohomology theory is the following. 

\begin{lemma}\label{Computing group cohomology using universal cover}
Let $X$ be a path-connected CW complex with $G=\pi_1(X)$, and let $M$ be a left $\C G$-module. Then 
\begin{equation}\label{group cohomology using universal cover}
    H^1(G;M) \cong H^1\left(\Hom_{\C G}(C_*(\widetilde X;\C),M)\right).
\end{equation}
\end{lemma}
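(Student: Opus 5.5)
The plan is to realize $C_*(\widetilde X;\C)$ as the first stages of a free resolution of the trivial module $\C$ over $\C G$, and then apply the free-resolution definition of group cohomology together with the equivalence theorem stated earlier. Since $X$ is a CW complex, the cellular chain groups $C_n(\widetilde X;\C)$ are free left $\C G$-modules, with one basis element for each lift of an $n$-cell of $X$; the deck group $G$ acts freely and permutes these lifts. The boundary maps are $G$-equivariant. The augmentation $\varepsilon: C_0(\widetilde X;\C)\to\C$ is surjective, and $H_0(\widetilde X;\C)=\C$ because $\widetilde X$ is connected. Moreover, $H_1(\widetilde X;\C)=0$ because $\widetilde X$ is simply connected, so $H_1(\widetilde X;\Z)=\pi_1^{ab}=0$. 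Hence the complex
$$C_2(\widetilde X;\C)\to C_1(\widetilde X;\C)\to C_0(\widetilde X;\C)\to \C\to 0$$
is exact at $C_1$, $C_0$ and $\C$. It need not be exact at $C_2$ or above, since the higher homology of $\widetilde X$ need not vanish.

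To deal with the failure of exactness at $C_2$, I will extend this partial resolution to a genuine free resolution $P_*$. Take $P_i=C_i(\widetilde X;\C)$ for $i=0,1$. For $P_2$, choose a free module $F$ surjecting onto $\ker(\partial_1)=\operatorname{im}(\partial_2^{cell})$, and continue freely beyond that. Equivalently, one can attach cells of dimension $\geq 3$ to $X$ to kill the higher homotopy, producing a $K(G,1)$. This is the cleaner route: attaching cells of dimension $\geq 3$ changes neither $\pi_1$ nor the $2$-skeleton. The universal cover of the resulting $K(G,1)$ is contractible, so its cellular chain complex is a free resolution of $\C$. This complex agrees with $C_*(\widetilde X;\C)$ in degrees $0,1,2$, and differs only by additional free summands in degree $3$ and up.

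Next I compare degree-one cohomology. Applying $\Hom_{\C G}(-,M)$, the group $H^1$ of a cochain complex depends only on the terms in degrees $0,1,2$ and on the maps $\partial_1,\partial_2$. More precisely, $H^1=\ker(\delta^1)/\operatorname{im}(\delta^0)$, where $\delta^1(\varphi)=\varphi\circ\partial_2$, and this depends only on $\operatorname{im}\partial_2=\ker\partial_1$. For $X$ and its $K(G,1)$ extension, the degree-$0,1,2$ parts coincide, so the two $H^1$ groups agree. By the equivalence theorem, the $H^1$ of the $K(G,1)$ complex is $H^1(G;M)$ (the argument works verbatim over $\C G$ in place of $\Z G$). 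This gives \eqref{group cohomology using universal cover}.

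The main obstacle is the step justifying that the cellular complex of $\widetilde X$ can be completed to a free resolution without changing it in degrees $\leq 2$. The cell-attaching argument has to be set up carefully: one must check that cells of dimension $\geq 3$ do not alter the $2$-skeleton, and that $\ker\partial_1$ is exactly $\operatorname{im}\partial_2$, which uses $H_1(\widetilde X)=0$. The remaining steps are routine.
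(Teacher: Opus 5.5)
Your proposal is correct and follows the same route as the paper: you view $C_*(\widetilde X;\C)$ as a free $\C G$-complex augmented to $\C$, which is exact at $C_1$, $C_0$ and $\C$ because $\widetilde X$ is connected and simply connected. Your extra step, attaching cells of dimension $\geq 3$ to get a $K(G,1)$ whose cellular complex agrees with $C_*(\widetilde X;\C)$ in degrees $\leq 2$, supplies the justification the paper leaves implicit when it simply asserts that this partial resolution suffices for computing $H^1$.
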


\begin{proof}
Let $P_*=C_*(\widetilde X;\C)=C_*(\widetilde X;\Z) \otimes_{\Z} \C .$ Let $e^{n}_i$ be a $n$-cell of $X$ and let $\tilde{e}^{n}_{i}$ be a lift of $e^{n}_i$ in $\widetilde{X}$. Since the action of $G$ on the lifted cells is free, every $P_n$ is a free $\C G$-module with basis $\{ \tilde{e}^{n}_{i}\}$. Hence there is an augmented chain complex
\begin{equation}\label{augmented chain complex}
    \cdots \longrightarrow P_2 \xrightarrow{\partial_2} P_1 \xrightarrow{\partial_1} P_0 \xrightarrow{\varepsilon} \C \longrightarrow0,
\end{equation}
where $\C$ is regarded as the trivial $G$-module, and $\epsilon$ is the augmentation map given by $\sum_{\tilde{v}} a_{\tilde{v}}\tilde{v} \mapsto \sum_{\tilde{v}} a_{\tilde{v}}$. 

For degree-one cohomology, since $\widetilde X$ is connected and simply connected, $H_0(\widetilde X;\C)\cong\C,~ H_1(\widetilde X;\C)=0.$ Hence
$$
P_2\longrightarrow P_1\longrightarrow P_0
\longrightarrow\C\longrightarrow0
$$
is exact at $P_1$, $P_0$, and $\C$. It is therefore a free
partial resolution sufficient for computing first group
cohomology, where the free resolution definition of group cohomology as presented in section \ref{group cohomology} is used. Consequently, $H^1(G;M) \cong H^1\left(\Hom_{\C G}(C_*(\widetilde X;\C),M)\right).$
\end{proof}

\begin{remark}
    If $X$ is a $K(G,1)$, then $\widetilde X$ is contractible. Consequently, the augmented chain complex \eqref{augmented chain complex} is a free $\C G$-resolution of the trivial $\C G$-module $\C$. Therefore, for every left $\C G$-module $M$, $H^n(G;M) \cong H^n \left(\Hom_{\C G}(P_*,M)\right)$ for all $n \geq 1$. But since we only need computation for $H^1$, it is not necessary to require $X$ to be a $K(G,1)$. 
\end{remark}

Now, let $X$ be a compact orientable 3-manifold with torus boundary
such that $H_1(X;\Z)\cong \Z\oplus T,$ where $T$ is finite. Fix a splitting $H_1(X;\Z)\cong\Z\oplus T$, and write the homology class of \(g\in\pi_1(X)\) as $[g]=\bigl(\phi(g),\tau(g)\bigr) \in\Z\oplus T$. Then any character $\chi: G \to \C^{\times}$ factors through $\chi=(\sigma,\lambda)$ where $\sigma:T\longrightarrow\C^{\times}$ is a character and $\lambda \in \C^{\times}$. 

Let $G=\pi_1(X)$. For each $\sigma:T\longrightarrow\C^{\times}$, define $\Phi_\sigma:\C G\longrightarrow \Lambda=\C[t^{\pm1}]$ by $\Phi_\sigma(g)=\sigma(\tau(g))t^{\phi(g)}.$ We define $\Lambda_{\sigma}$ to be $\Lambda_{\sigma}=\Lambda$, regarded as a right $\mathbb CG$-module by $a\cdot g=a\Phi_{\sigma}(g)$. Note that $\Lambda_{\sigma}$ is also a left $\Lambda$-module through self-multiplication. Let $X_{\phi}$ be the infinite cyclic cover of $X$ and let $C_*^{\sigma}(X_\phi;\mathbb C)=\Lambda_{\sigma}\otimes_{\mathbb CG} C_*(\widetilde X;\mathbb C)$. The $\sigma$-twisted Alexander module $A_{X}^{\sigma}$ is defined to be $H_1(C_*^{\sigma}(X_\phi;\mathbb C))$ and the $\sigma$-twisted Alexander polynomial is the order of $A_X^{\sigma}$. 

For $\lambda\in\C^\times$, define $\C_\lambda$ to be the left $\Lambda$-module $\Lambda/(t-\lambda)=\C$ on which $t$ acts by $t\cdot z=\lambda z.$ For each character $\sigma: T \to \C^{\times}$, define $\C_{\sigma,\lambda}$ to be the left $\C G$-module $\C$ with $g\cdot z=\sigma(\tau(g))\lambda^{\phi(g)}\cdot z.$ Then we have the following lemma.

\begin{lemma}\label{Computing group cohomology using infinite cyclic cover}
Let $X$ be a compact connected orientable three-manifold with torus boundary with $G=\pi_1(X)$, and let $X_\phi$ be the infinite cyclic cover of $X$. Then we have
\begin{equation}\label{group cohomology using infinite cyclic cover}
H^1(G;\C_{\sigma,\lambda})
\cong
H^1\left(\Hom_{\Lambda}(C_*^{\sigma}(X_\phi;\C),\C_\lambda)\right).
\end{equation}
\end{lemma}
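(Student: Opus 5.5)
The plan is to start from Lemma \ref{Computing group cohomology using universal cover} and then move the Hom-complex from $\C G$-modules to $\Lambda$-modules using tensor--Hom adjunction. Taking $M=\C_{\sigma,\lambda}$ in \eqref{group cohomology using universal cover} gives
$$
H^1(G;\C_{\sigma,\lambda})\cong H^1\left(\Hom_{\C G}(C_*(\widetilde X;\C),\C_{\sigma,\lambda})\right).
$$
So it suffices to produce an isomorphism of cochain complexes
$$
\Hom_{\C G}(C_*(\widetilde X;\C),\C_{\sigma,\lambda})\cong \Hom_{\Lambda}\bigl(\Lambda_\sigma\otimes_{\C G}C_*(\widetilde X;\C),\C_\lambda\bigr)=\Hom_\Lambda(C_*^\sigma(X_\phi;\C),\C_\lambda).
$$
This isomorphism must be natural in the chain complex, so that it commutes with the coboundaries $\varphi\mapsto\varphi\circ\partial$.

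The key step is the adjunction. Here $\Lambda_\sigma$ is a $(\Lambda,\C G)$-bimodule, with $\Lambda$ acting on the left by multiplication and $G$ acting on the right through $\Phi_\sigma$. These actions commute because $\Lambda$ is commutative. For any left $\C G$-module $P$ and left $\Lambda$-module $N$, tensor--Hom adjunction gives a natural isomorphism
$$
\Hom_\Lambda(\Lambda_\sigma\otimes_{\C G}P,N)\cong\Hom_{\C G}\bigl(P,\Hom_\Lambda(\Lambda_\sigma,N)\bigr),
$$
where $\Hom_\Lambda(\Lambda_\sigma,N)$ is a left $\C G$-module via $(g\cdot f)(a)=f(a\cdot g)$. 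I will apply this with $N=\C_\lambda$.

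It remains to identify $\Hom_\Lambda(\Lambda_\sigma,\C_\lambda)$ with $\C_{\sigma,\lambda}$ as $\C G$-modules. Since $\Lambda_\sigma$ is free of rank one over $\Lambda$, evaluation $f\mapsto f(1)$ is a $\C$-linear isomorphism onto $\C$. For the $G$-action, compute
$$
(g\cdot f)(1)=f(\Phi_\sigma(g))=f\bigl(\sigma(\tau(g))t^{\phi(g)}\bigr)=\sigma(\tau(g))\lambda^{\phi(g)}f(1),
$$
using that $t$ acts on $\C_\lambda$ by $\lambda$. This is exactly the action defining $\C_{\sigma,\lambda}$. Naturality of the adjunction in $P$ then gives the desired isomorphism of cochain complexes, and taking $H^1$ finishes the proof.

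The main obstacle is bookkeeping rather than substance. One must check that the left/right conventions for $\Lambda_\sigma$ (a right $\C G$-module) make the induced action on $\Hom_\Lambda(\Lambda_\sigma,\C_\lambda)$ a genuine left action with no inversion of $g$. This works because $a\cdot(gh)=(a\cdot g)\cdot h$, so $(g\cdot(h\cdot f))(a)=f(a\cdot g\cdot h)=((gh)\cdot f)(a)$. One must also check that the adjunction isomorphism commutes with the boundary maps, which follows from naturality.
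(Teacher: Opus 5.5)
Your proposal is correct and follows the same route as the paper: apply Lemma \ref{Computing group cohomology using universal cover} with coefficients $\C_{\sigma,\lambda}$, then use tensor--Hom adjunction for $\Lambda_\sigma\otimes_{\C G}(-)$ degree by degree to obtain an isomorphism of cochain complexes. Your explicit identification of $\Hom_\Lambda(\Lambda_\sigma,\C_\lambda)$ with $\C_{\sigma,\lambda}$ and your naturality check with respect to the coboundaries spell out more carefully what the paper compresses into the remark that $\C_\lambda$ is regarded as a $\C G$-module through $\Phi_\sigma$.
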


\begin{proof}
By definition, $C_{*}^{\sigma}(X_\phi;\C)=\Lambda_{\sigma}\otimes_{\C G} C_{*}(\widetilde{X};\C)$, which is naturally a chain complex of left $\Lambda$-modules through self-multiplication on $\Lambda_{\sigma}$.

For a left $\C G$-module $P$ and a left $\Lambda$-module $M$,
there is a natural isomorphism called Tensor-Hom adjunction given by 
$$
\Hom_\Lambda\left(\Lambda_{\sigma}\otimes_{\C G}P,M\right)
\cong
\Hom_{\C G}(P,M),
$$
where $M$ is regarded as a $\C G$-module via the composition of $\Phi_{\sigma}:\C G\longrightarrow\Lambda$ and the action of $\Lambda$. 

Applying this adjunction degree by degree to $P_*=C_*(\widetilde X;\C),~M=\C_\lambda,$ gives an isomorphism of cochain complexes
$$
\Hom_\Lambda\left(\Lambda_{\sigma}\otimes_{\C G}C_*(\widetilde X;\C),\C_\lambda\right)
\cong
\Hom_{\C G}\left(C_*(\widetilde X;\C),\C_{\sigma,\lambda}\right).
$$
which gives 
$$
\Hom_\Lambda(C_{*}^{\sigma}(X_\phi;\C),\C_\lambda)
\cong \Hom_{\C G}(C_*(\widetilde X;\C),\C_{\sigma,\lambda}).
$$
Then we apply lemma \ref{Computing group cohomology using universal cover} by setting $M=\C_{\sigma,\lambda}$ in \eqref{group cohomology using universal cover} to give 
$$
\begin{aligned}
H^1(G;\C_{\sigma,\lambda})
&\cong H^1\left(\Hom_{\C G}(C_*(\widetilde X;\C),\C_{\sigma,\lambda})\right)\\
&\cong H^1\left(\Hom_\Lambda(C_{*}^{\sigma}(X_\phi;\C),\C_\lambda)\right).
\end{aligned}
$$
This completes the proof. 
\end{proof}

We now prove the following important theorem. 

\begin{theorem}\label{equivalence of H^1 and twisted Alexander}
Let $X$ be a compact connected orientable three-manifold with torus boundary with $G=\pi_1(X)$ and $H_{1}(X) \cong \Z \oplus T$ where $T$ is the finite torsion part. Let $\chi=(\sigma,\lambda)$ be the decomposition of a character $\chi: G \to \C^{\times}$, and let $\Delta_{X}^{\sigma}(t)$ be the associated twisted Alexander polynomial. Suppose $\chi$ is nontrivial,i.e. $(\sigma,\lambda) \neq (1,1)$, then we have
$$
H^1(G;\mathbb C_{\sigma,\lambda})\neq 0\quad\Longleftrightarrow\quad
\Delta_{X}^\sigma(\lambda)=0.
$$
\end{theorem}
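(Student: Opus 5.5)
The argument will run through Lemma \ref{Computing group cohomology using infinite cyclic cover}, which identifies $H^1(G;\C_{\sigma,\lambda})$ with $H^1\bigl(\Hom_{\Lambda}(C_*,\C_\lambda)\bigr)$, where $C_*=C_*^{\sigma}(X_\phi;\C)$. Since $\Lambda=\C[t^{\pm1}]$ is a PID and every $C_n$ is a finitely generated free $\Lambda$-module (free on the lifts of the cells of a finite CW structure on $X$), the Universal Coefficient Theorem over $\Lambda$ gives a short exact sequence
$$
0\longrightarrow \Ext^1_{\Lambda}\bigl(H_0(C_*),\C_\lambda\bigr)\longrightarrow H^1\bigl(\Hom_\Lambda(C_*,\C_\lambda)\bigr)\longrightarrow \Hom_\Lambda\bigl(H_1(C_*),\C_\lambda\bigr)\longrightarrow 0 .
$$
Here $H_1(C_*)=A_X^\sigma$. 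The proof then reduces to computing the two outer terms.

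\textbf{The $\Hom$ term.} Assume $A_X^\sigma$ is torsion, as the paper stipulates, and write $A_X^\sigma\cong\bigoplus_j\Lambda/(f_j)$. Then
$$
\Hom_\Lambda\bigl(\Lambda/(f_j),\C_\lambda\bigr)=\{z\in\C : f_j(\lambda)z=0\},
$$
which is nonzero if and only if $f_j(\lambda)=0$. Hence $\Hom_\Lambda(A_X^\sigma,\C_\lambda)\neq0$ exactly when $\Delta_X^\sigma(\lambda)=\prod_j f_j(\lambda)=0$.

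\textbf{The $\Ext$ term.} Next compute $H_0(C_*)=\Lambda_\sigma\otimes_{\C G}\C$. This is the quotient of $\Lambda$ by the ideal generated by the elements $\Phi_\sigma(g)-1$, for $g\in G$. There are two cases.
\begin{itemize}
\item If $\sigma\neq1$, choose $g$ with $\phi(g)=0$ and $\sigma(\tau(g))\neq1$. Then $\Phi_\sigma(g)-1$ is a nonzero constant, so $H_0=0$ and the $\Ext$ term vanishes.
\item If $\sigma=1$, then $H_0=\Lambda/(t-1)$, and $\Ext^1_\Lambda\bigl(\Lambda/(t-1),\C_\lambda\bigr)=\C_\lambda/(\lambda-1)\C_\lambda$, which is zero because $\lambda\neq1$ in this case (since $(\sigma,\lambda)\neq(1,1)$).
\end{itemize}
In either case the middle term is isomorphic to $\Hom_\Lambda(A_X^\sigma,\C_\lambda)$, and the equivalence follows.

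\textbf{Main obstacle.} The delicate points are bookkeeping rather than conceptual.
\begin{itemize}
\item The isomorphism of Lemma \ref{Computing group cohomology using universal cover} uses only a partial resolution, so one must check that the UCT applies in degree $1$ using only $C_0,C_1,C_2$. This works because $\Lambda$ is a PID and submodules of free modules are free.
\item One must verify the left/right module conventions: that $\C_\lambda$, pulled back along $\Phi_\sigma$, really is $\C_{\sigma,\lambda}$.
\item One must justify the torsion hypothesis on $A_X^\sigma$. If $A_X^\sigma$ had a free summand, then $\Delta_X^\sigma=0$ under the convention $\operatorname{ord}=0$, and $\Hom_\Lambda(\Lambda,\C_\lambda)\neq0$. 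So the statement holds in that case as well, and I would note this.
\end{itemize}
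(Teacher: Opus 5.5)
Your proposal is correct and follows the paper's proof essentially step for step. It applies the universal coefficient sequence over the PID $\Lambda$ to $C_*^\sigma(X_\phi;\C)$, kills the $\Ext^1_\Lambda(H_0,\C_\lambda)$ term by splitting into the cases $\sigma\neq 1$ (where $H_0=0$) and $\sigma=1$ (where $H_0=\Lambda/(t-1)$ and $\lambda\neq 1$), and reads off $\Delta_X^\sigma(\lambda)=0$ from $\Hom_\Lambda(\Lambda/(f_j),\C_\lambda)$. Your first ``obstacle'' is not really one: the universal coefficient theorem holds for the full complex of free $\Lambda$-modules, and the partial-resolution issue is already handled by Lemma \ref{Computing group cohomology using universal cover}.
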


\begin{proof}
Set $Q_*=C_*^\sigma(X_\phi;\mathbb C).$ Since $Q_*$ is a chain complex of free $\Lambda$-modules and $\Lambda=\C[t^{\pm 1}]$ is a PID, the universal coefficient theorem gives a natural short exact sequence
$$
0
\longrightarrow
\operatorname{Ext}^1_\Lambda\left(H_0(Q_*),\mathbb C_\lambda\right)
\longrightarrow
H^1\left(\operatorname{Hom}_\Lambda(Q_*,\mathbb C_\lambda)\right)
\longrightarrow
\operatorname{Hom}_\Lambda\left(H_1(Q_*),\mathbb C_\lambda\right)
\longrightarrow 
0.
$$
Using $H_1(Q_*)=A_X^\sigma$ and lemma \ref{Computing group cohomology using infinite cyclic cover}, this becomes
$$
0
\longrightarrow
\operatorname{Ext}^1_\Lambda\left(H_0\bigl(C_*^\sigma(X_\phi;\mathbb C)\bigr),
\mathbb C_\lambda\right)
\longrightarrow 
H^1(G;\mathbb C_{\sigma,_\lambda})
\longrightarrow
\operatorname{Hom}_\Lambda\left(A_X^\sigma,\mathbb C_\lambda\right)
\longrightarrow 
0.
$$
Therefore, whenever $\operatorname{Ext}^1_\Lambda\left(H_0\bigl(C_*^\sigma(X_\phi;\mathbb C)\bigr),
\mathbb C_\lambda\right)=0,$ there is a natural isomorphism
$$
H^1(G;\mathbb C_{\sigma,_\lambda})
\cong
\operatorname{Hom}_\Lambda\left(A_X^\sigma,\mathbb C_\lambda\right).
$$

We now prove that when $(\sigma,\lambda) \neq (1,1)$, the Ext term indeed vanishes. Suppose that $G$ is generated by $x_1,\ldots,x_m.$ The degree-one boundary map is $\partial_1:\Lambda^m\longrightarrow\Lambda,$ where $\partial_1(e_j)=\sigma(\tau(x_j))t^{\phi(x_j)}-1.$ Consequently,
$$
H_0(C_*^\sigma)
\cong
\frac{\Lambda}{\left(\sigma(\tau(g))t^{\phi(g)}-1\mid g\in G\right)}.
$$

Suppose that $\sigma$ is nontrivial. Choose $u\in T$ such that $\sigma(u)\neq1.$
Since the projection of $G$ onto $H_1(X;\Z)\cong\Z\oplus T$ is surjective, there exists $g\in G$ whose homology class is $[g]=(0,u).$ Thus $\phi(g)=0,~\tau(g)=u.$ The corresponding relation in $H_0(C_*^\sigma)$ is $\sigma(\tau(g))t^{\phi(g)}-1=\sigma(u)-1.$
Since $\sigma(u)-1$ is a nonzero complex number, it is a unit in
$\Lambda$. Therefore, $H_0(C_*^\sigma)=0.$ It follows immediately that $\Ext^1_\Lambda\left(H_0(C_*^\sigma),\C_\lambda\right)=0$ for every $\lambda\in\C^\times$.

Suppose now that $\sigma$ is trivial. Then

$$
H_0(C_*^1)\cong\frac{\Lambda}{\left(t^{\phi(g)}-1\mid g\in G\right)}.
$$
Every polynomial $t^{\phi(g)}-1$ is divisible by $t-1$. Since $\phi$ is surjective, there exists $g_0\in G$ such that $\phi(g_0)=1.$ For this element,
$t^{\phi(g_0)}-1=t-1.$ Therefore, $H_0(C_*^1)\cong\Lambda/(t-1).$ To calculate the relevant $\Ext$ term, consider the free resolution of $\Lambda/(t-1)$
$$
0\longrightarrow\Lambda\xrightarrow{\,t-1\,}\Lambda\longrightarrow\Lambda/(t-1)\longrightarrow0.
$$
Since Hom functor is left exact, applying $\Hom_\Lambda(-,\C_\lambda)$ to the short exact sequence above gives
$$
0\longrightarrow\Hom_\Lambda(\Lambda/(t-1),\C_\lambda)\longrightarrow\C_\lambda
\xrightarrow{\,\lambda-1\,}\C_\lambda.
$$
On $\C_\lambda$, multiplication by $t-1$ is just multiplication by
$\lambda-1$, which is invertible when $\lambda \neq 1$. Hence $\Ext^1_\Lambda(\Lambda/(t-1),\C_\lambda)\cong\frac{\C_\lambda}{(\lambda-1)\C_\lambda}=0$ when $\lambda \neq 1$. 

Now by the definition of twisted Alexander polynomial as presented in section \ref{twisted Alexander module and polynomials}, we write
$$
A_X^\sigma
\cong
\bigoplus_{j=1}^s \Lambda/(f_j(t)),
$$
and hence $\Delta_{X}^\sigma(t) \doteq \prod_{j=1}^s f_j(t).$ For each summand, one has $\operatorname{Hom}_\Lambda\left(\Lambda/(f_j(t)),\mathbb C_\lambda\right) \neq 0$ if and only if $f_j(t)$ acts trivially on $\mathbb C_\lambda$. Since
$t$ acts as $\lambda$, this is equivalent to $f_j(\lambda)=0.$ Therefore,
$\operatorname{Hom}_\Lambda\left(A_X^\sigma,\mathbb C_\lambda\right)\neq 0$
if and only if at least one $f_j(\lambda)$ vanishes. Equivalently,
$$
\operatorname{Hom}_\Lambda\left(A_X^\sigma,\mathbb C_\lambda\right)
\neq 0
\quad\Longleftrightarrow\quad\Delta_{X}^\sigma(\lambda)=0.
$$
Since the Ext term vanishes, we therefore obtain
$$
H^1(G;\mathbb C_{\sigma,\lambda})\neq 0\quad\Longleftrightarrow\quad
\Delta_{X}^\sigma(\lambda)=0.
$$
\end{proof}

Collecting all of the previous results, we can prove the following twisted version of Burde-de Rham(cf. \cite{HeusenerPorti2005}, Corollary 5.1). 

\begin{theorem}\label{twisted Burde de Rham theorem}
Let $X$ be a compact connected three-manifold with torus boundary
and fix a projection of $G=\pi_1(X)$ onto $H_1(X;\Z)\cong T\oplus \Z $ given by $(\tau,\phi)$. For any character $\alpha:G \longrightarrow \C^{\times}$, we write $\alpha=(\sigma,\lambda)$ so that $\alpha(g)=\sigma(\tau(g))\lambda^{\phi(g)}$ for any $g \in G$. Assume $\alpha^2\neq1.$ Then the following are equivalent:

(1). There exists a nonabelian reducible representation $\rho:G\longrightarrow \operatorname{SL}(2,\C)$ whose diagonal characters are $\alpha, \alpha^{-1}.$

(2). $\Delta^{\sigma^2}_{X}(\lambda^2)=0.$
\end{theorem}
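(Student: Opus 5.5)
The plan is to chain Theorem \ref{Cohomological criterion} with Theorem \ref{equivalence of H^1 and twisted Alexander}, with the character $\alpha^2$ as the intermediary. By Theorem \ref{Cohomological criterion}, since $\alpha^2\neq 1$ by hypothesis, statement (1) is equivalent to $H^1(G;\C_{\alpha^2})\neq 0$. So the whole task reduces to identifying the $G$-module $\C_{\alpha^2}$ with one of the modules $\C_{\sigma',\lambda'}$ appearing in Theorem \ref{equivalence of H^1 and twisted Alexander}, and checking that theorem's nontriviality hypothesis.

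For the identification, write $\alpha(g)=\sigma(\tau(g))\lambda^{\phi(g)}$. Then
\[
\alpha(g)^2=\sigma(\tau(g))^2\lambda^{2\phi(g)}=\sigma^2(\tau(g))\,(\lambda^2)^{\phi(g)},
\]
where $\sigma^2:T\to\C^\times$, $u\mapsto\sigma(u)^2$, is again a character of $T$, being a homomorphism into an abelian group. Hence the decomposition of the character $\alpha^2$ with respect to the fixed splitting $(\tau,\phi)$ is $(\sigma^2,\lambda^2)$. Consequently $\C_{\alpha^2}=\C_{\sigma^2,\lambda^2}$ as left $\C G$-modules: both are $\C$ with $g$ acting by the same scalar.

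For the hypothesis, Theorem \ref{equivalence of H^1 and twisted Alexander} requires $(\sigma^2,\lambda^2)\neq(1,1)$. I will argue by contrapositive. If $\sigma^2=1$ and $\lambda^2=1$, then $\alpha^2(g)=1$ for all $g$, contradicting the assumption $\alpha^2\neq 1$. Applying the theorem with $\sigma^2$ in place of $\sigma$ and $\lambda^2$ in place of $\lambda$ then gives
\[
H^1(G;\C_{\sigma^2,\lambda^2})\neq 0 \iff \Delta_X^{\sigma^2}(\lambda^2)=0.
\]
Combining the three equivalences yields (1)$\Leftrightarrow$(2).

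The main point needing care is bookkeeping rather than a real obstacle. First, $\Delta_X^{\sigma^2}$ must be defined with respect to the same fixed splitting as $\alpha$. Second, Theorem \ref{equivalence of H^1 and twisted Alexander} was stated for orientable $X$ with torus boundary, so I will note that orientability is implicitly assumed here as well, as in the ambient setting of knot exteriors in lens spaces.
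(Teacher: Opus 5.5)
Your proposal is correct and is the paper's proof: the paper simply combines Theorem \ref{Cohomological criterion} with Theorem \ref{equivalence of H^1 and twisted Alexander}. You supply the bookkeeping the paper leaves implicit, namely that $\alpha^2$ decomposes as $(\sigma^2,\lambda^2)$ with respect to the fixed splitting (so $\C_{\alpha^2}=\C_{\sigma^2,\lambda^2}$), that $\alpha^2\neq 1$ gives the required nontriviality $(\sigma^2,\lambda^2)\neq(1,1)$, and that orientability must be tacitly assumed in order to invoke the second theorem.
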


\begin{proof}
    Combing Theorem \ref{Cohomological criterion} and Theorem \ref{equivalence of H^1 and twisted Alexander} yields the desired result. 
\end{proof}

We now consider the analogous statement for 2-bridge link complements in $\mathbb{S}^3$. Let $L$ be a 2-bridge link in $\mathbb{S}^3$ and let $M=\mathbb{S}^3-N(L)$ be the complement of some neighborhood of $L$ in $\mathbb{S}^3$. Then $H_1(M;\Z)\cong \Z\oplus\Z $ has no torsion, and $\Z\oplus\Z$ is generated by a pair of preferred meridians on each of the cusp. We fix a projection of $G=\pi_1(M)$ onto $H_1(M;\Z)\cong \Z\oplus\Z $ given by $(\phi_1,\phi_2)$, then any character $\alpha:\pi_1(M) \longrightarrow \C^{\times}$ factors through $(\lambda_1,\lambda_2)$ via $\alpha(g)=\lambda_1^{\phi_1(g)}\lambda_2^{\phi_2(g)}$. Let $A_M,~\Delta_M(t_1,t_2)$ be the Alexander module and Alexander polynomial of $M$ respectively, as defined in \ref{def of Alexander for 2-components}. Then the 2-bridge version of the Burde-de Rham criterion is stated as follows. 

\begin{theorem}\label{Burde de Rham theorem for 2-bridge link}
Let $L$ be a 2-bridge link in $\mathbb{S}^3$ and let $M=\mathbb{S}^3-N(L)$ be the complement of some neighborhood of $L$ in $\mathbb{S}^3$. Fix a projection of $G=\pi_1(M)$ onto $H_1(M;\Z)\cong \Z\oplus\Z $ given by $(\phi_1,\phi_2)$. For any character $\alpha:G \longrightarrow \C^{\times}$, we write $\alpha=(\lambda_1,\lambda_2)$ so that $\alpha(g)=\lambda_1^{\phi_1(g)}\lambda_2^{\phi_2(g)}$ for any $g \in G$. Assume $\alpha^2\neq1.$ Then the following are equivalent:

(1). There exists a nonabelian reducible representation $\rho:G\longrightarrow \operatorname{SL}(2,\C)$ whose diagonal characters are $\alpha, \alpha^{-1}.$

(2). $\Delta_{M}(\lambda_1^2,\lambda_2^2)=0.$
\end{theorem}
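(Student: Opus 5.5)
The plan is to run the same two-step argument as in Theorem \ref{twisted Burde de Rham theorem}, now with coefficients in the two-variable ring $\Lambda=\C[t_1^{\pm1},t_2^{\pm1}]$. The first step is purely group-theoretic and transfers verbatim: by Theorem \ref{Cohomological criterion}, applied to $G=\pi_1(M)$ and the character $\alpha$ with $\alpha^2\neq1$, condition (1) is equivalent to $H^1(G;\C_{\alpha^2})\neq0$. Since $\alpha^2(g)=(\lambda_1^2)^{\phi_1(g)}(\lambda_2^2)^{\phi_2(g)}$, it remains to show that
$$
H^1(G;\C_{\alpha^2})\neq 0 \quad\Longleftrightarrow\quad \Delta_M(\lambda_1^2,\lambda_2^2)=0
$$
for every nontrivial character $\beta=(\mu_1,\mu_2):=(\lambda_1^2,\lambda_2^2)\neq(1,1)$.

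For the second step I would first pass to the universal abelian cover. The analogue of Lemma \ref{Computing group cohomology using infinite cyclic cover} holds with $\Lambda_\sigma$ replaced by $\Lambda$, regarded as a right $\C G$-module via $g\mapsto t_1^{\phi_1(g)}t_2^{\phi_2(g)}$, and with $\C_\beta=\Lambda/(t_1-\mu_1,t_2-\mu_2)$. Tensor--Hom adjunction together with Lemma \ref{Computing group cohomology using universal cover} then gives
$$
H^1(G;\C_\beta)\cong H^1\bigl(\Hom_\Lambda(Q_*,\C_\beta)\bigr),\qquad Q_*=\Lambda\otimes_{\C G}C_*(\widetilde M;\C).
$$
Since $\Lambda$ is no longer a PID, I will avoid the universal coefficient theorem and compute directly from a two-generator one-relator presentation. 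Using the 2-complex of $\langle a,b\mid aw=wa\rangle$ (which suffices for $H^1$ by Lemma \ref{Computing group cohomology using universal cover}), $Q_*$ is
$$
\Lambda\xrightarrow{\partial_2}\Lambda^2\xrightarrow{\partial_1}\Lambda,
$$
where $\partial_1=(t_1-1,\,t_2-1)$ and $\partial_2$ is given by the Fox derivatives $(r_a,r_b)$ of the relator. Evaluating at $\beta$ yields the complex of $\C$-vector spaces
$$
\C\xrightarrow{\partial_1(\beta)^T}\C^2\xrightarrow{\partial_2(\beta)^T}\C.
$$
Because $\beta\neq(1,1)$, $\partial_1(\beta)\neq0$, so the coboundaries are one-dimensional. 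Hence $H^1\neq0$ if and only if the cocycle space has dimension $2$, i.e. if and only if $\partial_2(\beta)=0$, i.e. $r_a(\beta)=r_b(\beta)=0$.

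The key step is to relate the Fox derivatives to $\Delta_M$. The fundamental formula of Fox calculus gives $r_a(t_1-1)+r_b(t_2-1)=0$ in $\Lambda$, because the relator maps to $1$. Since $\Lambda$ is a UFD and $t_1-1$, $t_2-1$ are coprime, there is a single $D\in\Lambda$ with $r_a=(t_2-1)D$ and $r_b=-(t_1-1)D$. The Alexander module presentation has matrix $(r_a\ r_b)$ after removing one generator via the standard $H_1$ of the cover, so $\operatorname{Fitt}_0$ is generated by $r_a,r_b$ up to the $\partial_1$ contribution. By Remark \ref{Alexander poly for 2-bridge link}, $\Delta_M\doteq D$. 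Then $r_a(\beta)=r_b(\beta)=0$ holds if and only if $D(\beta)=0$, because $(\mu_1-1,\mu_2-1)\neq(0,0)$, so at least one of the factors $t_2-1$, $t_1-1$ is nonzero at $\beta$. This proves $(2)$.

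I expect the main obstacle to be the precise identification $\Delta_M\doteq D$ with the paper's Definition \ref{def of Alexander for 2-components}, which is stated in terms of the module $H_1$ of the cover rather than Fox matrices. To handle it, I would use the exact sequence $0\to A_M\to \Lambda^2/\operatorname{im}\partial_2\to I\to0$, where $I=(t_1-1,t_2-1)$ is the augmentation ideal, and invoke Remark \ref{Alexander poly for 2-bridge link} to take $A_M\cong\Lambda/(\Delta_M)$, checking that this matches $D$ (the standard Torres/Fox fact for two-component links). If that matching proves delicate, I would instead take $D$ as given by the remark and cite the classical identification.
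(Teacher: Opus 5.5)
Your argument is correct, but it takes a different route from the paper's.

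\textbf{The paper's route.} The paper stays inside the homological framework of Theorem \ref{equivalence of H^1 and twisted Alexander}. It runs a universal coefficient spectral sequence for $Q_*$ over $\Lambda=\C[t_1^{\pm1},t_2^{\pm1}]$. It asserts that $\Ext^1_\Lambda(H_0(Q_*),\C_\beta)$ and $\Ext^2_\Lambda(H_0(Q_*),\C_\beta)$ vanish for $\beta\neq(1,1)$. This is true because $H_0(Q_*)=\Lambda/(t_1-1,t_2-1)$ and $\C_\beta$ are supported at different maximal ideals. From this it gets $H^1(G;\C_\beta)\cong\Hom_\Lambda(A_M,\C_\beta)$. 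It then relies on the unproved Remark \ref{Alexander poly for 2-bridge link} that $A_M\cong\Lambda/(\Delta_M)$.

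\textbf{Your route.} You evaluate the cochain complex of the presentation $2$-complex at $\beta$ and count dimensions. You then use Fox's fundamental formula and unique factorization to write $(r_a,r_b)=D\,(t_2-1,-(t_1-1))$.

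\textbf{What each buys.} Your approach avoids the spectral sequence and gives the sharper fact that $\dim H^1(G;\C_\beta)\le 1$, with equality iff $D(\beta)=0$. It also proves the Remark instead of citing it, so the step you flagged as the main obstacle closes in one line. Since $t_1-1$ and $t_2-1$ are coprime in the UFD $\Lambda$, $\ker\partial_1=\Lambda\cdot(t_2-1,-(t_1-1))$ is free of rank one. Also $\operatorname{im}\partial_2=\Lambda\cdot D\,(t_2-1,-(t_1-1))$. Hence $A_M=\ker\partial_1/\operatorname{im}\partial_2\cong\Lambda/(D)$ and $\Delta_M\doteq D$, with no appeal to Torres. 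The price is that your argument uses the two-generator one-relator presentation essentially. The paper's argument would apply verbatim to any link whose Alexander module is known to be cyclic.

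\textbf{Two points to tidy.} First, replace your sentence about $\operatorname{Fitt}_0$ with the kernel computation above. For the module $\Lambda^2/\operatorname{im}\partial_2$ one has $\operatorname{Fitt}_0=0$ and $\operatorname{Fitt}_1=(r_a,r_b)$, so as written that sentence is not right. Second, say why computing $A_M$ on the presentation complex $K$ instead of on $M$ is legitimate. The $\Z^2$-covers of $K$ and $M$ both have fundamental group $N=\ker(G\to\Z^2)$. Hence both have $H_1\cong N^{ab}\otimes\C$ as $\Lambda$-modules.
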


\begin{proof}
Theorem \ref{Cohomological criterion} and lemma \ref{Computing group cohomology using universal cover} can be applied directly to the situation where $M$ is a 2-bridge link complement. For lemma \ref{Computing group cohomology using infinite cyclic cover}, set $\Lambda=\C[t_1^{\pm 1},t_2^{\pm 1}]$ and $C_*(M_\phi;\mathbb C)=\Lambda\otimes_{\mathbb CG} C_*(\widetilde M;\mathbb C)$. Then applying the same arguments in lemma \ref{Computing group cohomology using infinite cyclic cover}, we obtain the similar result for $M$: $H^1(G;\C_{\lambda_1,\lambda_2})
\cong H^1\left(\Hom_{\Lambda}(C_*(M_\phi;\C),\C_{\lambda_1,\lambda_2})\right)$.   For Theorem \ref{equivalence of H^1 and twisted Alexander}, we can apply similar arguments with the following modifications. When $M$ is a 2-bridge link complement, $\Lambda=\C[t_1^{\pm 1},t_2^{\pm 1}]$ is no longer a PID and we can not directly apply the universal coefficient theorem. Instead, we apply the universal coefficient spectral sequence to obtain the exact sequence 

$$
\begin{aligned}
0\longrightarrow&
\operatorname{Ext}_\Lambda^1(H_0(Q_*),\C_{\lambda_1,\lambda_2})
\longrightarrow H^1(M;\C_{\lambda_1,\lambda_2})\\
\longrightarrow&
\operatorname{Hom}_\Lambda(A_M,\C_{\lambda_1,\lambda_2})
\longrightarrow
\operatorname{Ext}_\Lambda^2(H_0(Q_*),\C_{\lambda_1,\lambda_2})
\longrightarrow H^2(M;\C_{\lambda_1,\lambda_2}).
\end{aligned}
$$
where $Q_{*}=C_*(M_\phi;\mathbb C)$. By similar computations in the proof of Theorem \ref{equivalence of H^1 and twisted Alexander}, we can show that the two Ext terms in the sequence vanish when $(\lambda_1,\lambda_2)\neq (1,1)$ and hence  $H^1(M;\C_{\lambda_1,\lambda_2}) \cong \operatorname{Hom}_\Lambda(A_M,\C_{\lambda_1,\lambda_2})$. By remark \ref{Alexander poly for 2-bridge link}, $A_M \cong \Lambda/(\Delta_M)$ when $M$ is a 2-bridge link in $\mathbb{S}^3$. Therefore, $\operatorname{Hom}_\Lambda(A_M,\C_{\lambda_1,\lambda_2}) \neq 0$ if and only if $\Delta_{M}(\lambda_1,\lambda_2)=0$. Combing this result with Theorem \ref{Cohomological criterion} yields the desired result.  
\end{proof}

We now use these results to study the reducible intersections using twisted Alexander polynomials. Let $M$ be a two bridge link complement in $\mathbb{S}^3$ with torus boundaries $T_1,T_2$ and let $X_0(M)$ be a canonical surface in the character variety of $M$. Let $(\mu_1,\lambda_1),(\mu_2,\lambda_2)$ be preferred pairs of meridians and longitudes for $T_1,T_2$ respectively. Then $H_1(M;\Z) \cong \Z \oplus \Z$, with free generators given by $[\mu_1], [\mu_2]$. We perform Dehn-fillings on $T_1$ with slopes $\gamma=\frac{p}{q},\delta=\frac{r}{s}$ to obtain $M_\gamma,M_{\delta}$. Assume that the slopes are nonzero, i.e. $p\neq 0, r\neq 0$, and the linking number of the two boundary components is 0. Then $H_1(M_\gamma;\Z) \cong H_1(M;\Z)/\langle p[\mu_1]\rangle \cong \Z/|p|\Z \oplus \Z$ where the generators are given by $[\overline{\mu_1}], [\overline{\mu_2}]$ respectively and $\overline{\mu_i}$ is the image of $\mu_i$ under the surjective quotient map $\iota_{\gamma}: \pi_1(M) \longrightarrow \pi_1(M_\gamma)$. Similar results also hold for $H_1(M_\delta;\Z) \cong \Z/|r|\Z \oplus \Z$.

\begin{remark}\label{linking number 0 guarantees free generator}
If the linking number of the two boundary components is $k\neq 0$, then $H_1(M_\gamma;\Z) \cong H_1(M;\Z)/\langle p[\mu_1]+q[\lambda_1]\rangle \cong \langle p[\mu_1]+qk[\mu_2]\rangle$. Then the order of the torsion part of $H_1(M_\gamma;\Z)$ may not be $|p|$ and $[\overline{\mu_2}]$ may not be the generator for the free part. 
\end{remark}

Now define $C_\gamma, C_\delta$ as in section 2.3. Let $\chi \subset C_\gamma \cap C_\delta \subset X_0(M)$ be a reducible character with representative $\rho: \pi_1(M) \longrightarrow \operatorname{SL}(2,\C)$. Then $\rho=\iota_{\gamma,*}(\rho_{\gamma})=\rho_{\gamma}\circ \iota_{\gamma}$ for some $\rho_{\gamma}:\pi_1(M_{\gamma})\longrightarrow \operatorname{SL}(2,\C)$. Similarly, $\rho=\iota_{\delta,*}(\rho_{\delta})=\rho_{\delta}\circ \iota_{\delta}$ for some $\rho_{\delta}:\pi_1(M_{\delta})\longrightarrow \operatorname{SL}(2,\C)$. Since $\rho$ is reducible, both $\rho_\gamma,\rho_\delta$ are reducible. We choose $\rho$(resp. $\rho_\gamma,\rho_\delta$) to be upper-triangular with diagonal entries $(\alpha,\alpha^{-1})$(resp. $(\alpha_{\gamma},\alpha_{\gamma}^{-1}),(\alpha_{\delta},\alpha^{-1}_{\delta})$), where $\alpha:\pi_1(M) \longrightarrow \C^{\times}$ is a group homomorphism and $\alpha=\iota_{\gamma,*}(\alpha_{\gamma})=\alpha_{\gamma}\circ \iota_{\gamma}=\iota_{\delta,*}(\alpha_{\delta})=\alpha_{\delta}\circ \iota_{\delta}$. We further assume that $\chi \subset C_1\cap C_2$ where $C_1\subset C_{\gamma}, C_2 \subset C_{\delta}$ are two irreducible curves, each containing a character of an irreducible representation. We also assume that $\chi \notin \phi^{-1}(\Sigma)$ where $\phi$ is the boundary restriction map onto $X(T_1)$ and $\Sigma$ consists of the four singular points in $X(T_1)$. By our assumptions on $\chi$ and \cite{Chin_Reid_Stover} Proposition 2.11, $\rho_\gamma,\rho_\delta$(and hence $\rho$) can be taken to have nonabelian image and $\chi \notin \phi^{-1}(\Sigma)$ guarantees that $\alpha_\gamma,\alpha_\delta,\alpha \neq \pm 1$. Then we have the following.

\begin{proposition}\label{reducible intersection and common root}
Let $M$ be a two bridge link complement in $\mathbb{S}^3$ such that the linking number of the two boundary components is 0. Let $\gamma=\frac{p}{q}\neq 0,\delta=\frac{r}{s}\neq0$ and let $C_1\subset C_{\gamma}, C_2 \subset C_{\delta}$ be two irreducible curves, each containing a character of an irreducible representation. If $\chi$ is a reducible character and $\chi \in C_1\cap C_2\setminus \phi^{-1}(\Sigma)$, then $\chi$ detects a common root $\lambda^2$ for $\Delta_{M_\gamma}^{\sigma^2_{\gamma}}$ and $\Delta_{M_\delta}^{\sigma^2_{\delta}}$, where $\sigma_{\gamma}(\overline{1})=\sigma_{\delta}(\overline{1})=\xi$ and $\xi$ is some $d$-th root of unity for $d=\operatorname{gcd}(|p|,|r|)$. 
\end{proposition}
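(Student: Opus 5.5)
The plan is to apply the twisted Burde--de Rham criterion (Theorem \ref{twisted Burde de Rham theorem}) twice, once to $M_\gamma$ and once to $M_\delta$. We then check that the two resulting roots coincide and that the two torsion characters take a common value $\xi$ of order dividing $d$.

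First, fix the setup from the paragraph preceding the statement. Take $\rho$ upper triangular with diagonal character $\alpha:\pi_1(M)\to\C^\times$, with $\alpha=\alpha_\gamma\circ\iota_\gamma=\alpha_\delta\circ\iota_\delta$. By \cite{Chin_Reid_Stover} Proposition 2.11, $\rho_\gamma$ and $\rho_\delta$ have nonabelian image. The assumption $\chi\notin\phi^{-1}(\Sigma)$ gives $\alpha_\gamma,\alpha_\delta\neq\pm1$, hence $\alpha_\gamma^2\neq1$ and $\alpha_\delta^2\neq1$. Since the linking number is $0$, we have
\[
H_1(M_\gamma;\Z)\cong \Z/|p|\Z\oplus\Z,
\]
with the torsion part generated by $[\overline{\mu_1}]$ and the free part by $[\overline{\mu_2}]$; the same holds for $M_\delta$ with $|r|$. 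With respect to these splittings, write $\alpha_\gamma=(\sigma_\gamma,\lambda_\gamma)$ and $\alpha_\delta=(\sigma_\delta,\lambda_\delta)$. Here $\sigma_\gamma(\overline 1)=\alpha_\gamma(\overline{\mu_1})=\alpha(\mu_1)$ and $\lambda_\gamma=\alpha_\gamma(\overline{\mu_2})=\alpha(\mu_2)$, and the analogous identities hold for $\delta$. Thus $\lambda_\gamma=\lambda_\delta=\alpha(\mu_2)=:\lambda$ and $\sigma_\gamma(\overline1)=\sigma_\delta(\overline1)=\alpha(\mu_1)=:\xi$.

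Next, identify $\xi$ as a root of unity of order dividing $d$. Since $\sigma_\gamma$ is a character of $\Z/|p|\Z$, we get $\xi^{p}=1$. More directly, $\alpha(\mu_1^p\lambda_1^q)=1$, and $\alpha(\lambda_1)=1$ because $[\lambda_1]=0$ in $H_1(M)$ when the linking number is $0$ (as in the proof of Proposition \ref{reducible only detect 0}). In the same way $\xi^r=1$, so $\xi^{\gcd(|p|,|r|)}=1$, i.e.\ $\xi$ is a $d$-th root of unity. Then apply Theorem \ref{twisted Burde de Rham theorem} to $X=M_\gamma$ with the nonabelian reducible $\rho_\gamma$, whose diagonal characters are $\alpha_\gamma^{\pm1}$ with $\alpha_\gamma^2\neq1$; this gives $\Delta^{\sigma_\gamma^2}_{M_\gamma}(\lambda^2)=0$. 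Applying it to $M_\delta$ gives $\Delta^{\sigma_\delta^2}_{M_\delta}(\lambda^2)=0$. Hence $\lambda^2$ is a common root, with $\sigma_\gamma(\overline1)=\sigma_\delta(\overline1)=\xi$.

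The main obstacle is the bookkeeping of the splittings: we must make sure the projections $(\tau,\phi)$ used in Theorem \ref{twisted Burde de Rham theorem} are exactly those induced by $[\overline{\mu_1}]$ and $[\overline{\mu_2}]$. This is the point where the zero linking number hypothesis enters (Remark \ref{linking number 0 guarantees free generator}): it makes $[\overline{\mu_2}]$ a free generator and $[\overline{\mu_1}]$ generate the torsion of order $|p|$. We also need that the nonabelian-image conclusion passes from $\rho$ to $\rho_\gamma$ and $\rho_\delta$; this holds because $\iota_\gamma$ and $\iota_\delta$ are surjective, so $\rho_\gamma(\pi_1(M_\gamma))=\rho(\pi_1(M))$ and likewise for $\delta$.
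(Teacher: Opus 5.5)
Your proposal is correct and is essentially the paper's proof: you read off $\lambda=\alpha(\mu_2)$ and $\xi=\alpha(\mu_1)$ from the zero-linking-number splittings $H_1(M_\gamma;\Z)\cong\Z/|p|\Z\oplus\Z$ and $H_1(M_\delta;\Z)\cong\Z/|r|\Z\oplus\Z$, then apply Theorem \ref{twisted Burde de Rham theorem} to $\rho_\gamma$ and $\rho_\delta$. Your derivation of $\xi^p=\xi^r=1$ from $\alpha(\lambda_1)=1$ is a slightly more explicit version of the paper's appeal to the domains of $\sigma_\gamma,\sigma_\delta$, and the one point to tighten is that $\alpha^2\neq1$ does not follow from $\alpha\neq\pm1$ as a character; it follows because $\phi(\chi)=(\xi+\xi^{-1},2,\xi+\xi^{-1})\notin\Sigma$ forces $\xi\neq\pm1$.
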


\begin{proof}
Since the linking number is 0, from the discussion above, we can fix the isomorphism $H_1(M_\gamma,\Z) \cong \Z/|p|\Z \oplus \Z$ with generators $[\overline{\mu_1}], [\overline{\mu_2}]$. Then $\alpha_{\gamma}=(\sigma_\gamma,\lambda_{\gamma})$ where $\lambda_\gamma=\alpha_{\gamma}(\overline{\mu_2})$ and $\sigma_\gamma(\overline{1})=\alpha_{\gamma}(\overline{\mu_1})$. Similar results hold for $\alpha_{\delta}=(\sigma_\delta,\lambda_{\delta})$. Since $\alpha=\alpha_{\gamma}\circ \iota_{\gamma}=\alpha_{\delta}\circ \iota_{\delta}$, we have $\alpha(\mu_2)=\alpha_{\gamma}\circ \iota_{\gamma}(\mu_2)=\alpha_{\delta}\circ \iota_{\delta}(\mu_2)$, i.e. $\lambda_\gamma=\lambda_\delta=\lambda$, and $\alpha(\mu_1)=\alpha_{\gamma}\circ \iota_{\gamma}(\mu_1)=\alpha_{\delta}\circ \iota_{\delta}(\mu_1)$, i.e. $\sigma_\gamma(\overline{1})=\sigma_\delta(\overline{1})=\xi$. Then consider $\rho_\gamma,\rho_\delta$. From the discussion above and Theorem \ref{twisted Burde de Rham theorem}, we have $\Delta_{M_\gamma}^{\sigma^2_{\gamma}}(\lambda^2)=\Delta_{M_\delta}^{\sigma^2_{\delta}}(\lambda^2)=0$ and $\sigma_{\gamma}(\overline{1})=\sigma_{\delta}(\overline{1})=\xi$. $\xi$ is some $d$-th root of unity for $d=\operatorname{gcd}(|p|,|r|)$ because the domain for $\sigma_\gamma, \sigma_\delta$ are $\Z/|p|\Z, \Z/|r|\Z$ respectively. 
\end{proof}

\begin{remark}
If the linking number of the two boundary components is not 0, then as suggested in remark \ref{linking number 0 guarantees free generator}, $[\overline{\mu_2}]$ may not be the generator for the free part of $H_1(M_\gamma,\Z)$(resp. $H_1(M_\delta,\Z)$). Then $H_1(M_\gamma,\Z)$ and $H_1(M_\delta,\Z)$ may no longer have compatible splitting and a common root may not be detected. 
\end{remark}

Finally, we use the Burde-de Rham criterion to study reducible intersection for the Whitehead link. If we perform a Dehn-filling on $T_1$ of the Whitehead link complement with slope $\gamma=\frac{1}{n}$, then we will obtain a twist knot complement $K_n$ in $\mathbb{S}^3$, where the 1st homology group has no torsion. We assume $K_n$ is hyperbolic. Then From \cite{MPV2011}, the character variety $X(K_n)$ contains exactly two irreducible components and one of them is the canonical curve. We prove the following.    

\begin{corollary}\label{never intersect at reducible for whitehead}
Let $M$ be the Whitehead link complement in $\mathbb{S}^3$ and let $\gamma=\frac{1}{n}, \delta=\frac{1}{m},~n\neq m$ such that $M_\gamma=K_n, M_{\delta}=K_m$ are both hyperbolic. Let $C_1\subset C_{\gamma}, C_2 \subset C_{\delta}$ be the canonical curves of $X(K_n)$ and $X(K_m)$ respectively, when embedded in $X_0(M)$. Then $C_1$ and $C_2$ can never intersect at a reducible character $\chi \notin \phi^{-1}(\Sigma)$.  
\end{corollary}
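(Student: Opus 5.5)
Suppose, for contradiction, that $\chi\in C_1\cap C_2\setminus\phi^{-1}(\Sigma)$ is reducible. The plan is to reduce the claim to a statement about ordinary Alexander polynomials of twist knots and then rule out common roots. The slopes $\gamma=\frac1n,\delta=\frac1m$ have $p=r=1$, and the Whitehead link has linking number $0$. So $H_1(K_n;\Z)\cong H_1(K_m;\Z)\cong\Z$ is generated by $[\overline{\mu_2}]$, the torsion part $T$ is trivial, and $\sigma_\gamma=\sigma_\delta=1$; in Proposition \ref{reducible intersection and common root} we have $d=\gcd(1,1)=1$ and $\xi=1$. The hypotheses of that proposition hold: $C_1,C_2$ are canonical curves, hence contain irreducible characters, and $\chi\notin\phi^{-1}(\Sigma)$. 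It therefore produces $\lambda=\alpha(\mu_2)$ with $\lambda^2$ a common root of the ordinary Alexander polynomials $\Delta_{K_n}(t)$ and $\Delta_{K_m}(t)$. (Here $\alpha^2\neq 1$, since $\alpha\neq\pm1$ by the discussion preceding that proposition.)

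Next I would use the explicit Alexander polynomials of twist knots. For the twist knot $K_n$ obtained by $1/n$ filling on one component of the Whitehead link, one has, up to units and the sign convention for $n$,
$$\Delta_{K_n}(t)\doteq n t^2-(2n+1)t+n \quad\text{or}\quad \Delta_{K_n}(t)\doteq n t^2-(2n-1)t+n.$$
I would fix the correct convention by computing $\Delta_M(t_1,t_2)\doteq(t_1-1)(t_2-1)$ for the Whitehead link, and then deriving $\Delta_{K_n}$ from the Torres formula/surgery, or simply cite the standard formula. Roughly, this reads $\Delta_{K_n}(t)=n(t-1)^2\pm t$.

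Finally I would show that two such polynomials with $n\neq m$ share no root. Suppose $\Delta_{K_n}(\beta)=\Delta_{K_m}(\beta)=0$. Then $n(\beta-1)^2=\mp\beta=m(\beta-1)^2$, so $(n-m)(\beta-1)^2=0$, giving $\beta=1$. But then $\mp\beta=0$, a contradiction; note also $\Delta_{K}(1)=\pm1\neq0$. Hence no common root $\lambda^2$ exists, contradicting the previous step, and so no reducible $\chi$ of the stated kind exists.

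The main obstacle is bookkeeping rather than ideas. One must check that the convention for $K_n$ (which component is filled, and the sign of the twist) yields Alexander polynomials of the uniform form $n(t-1)^2\pm t$ with the same sign for both $n$ and $m$. One must also confirm that $\alpha_\gamma$ and $\alpha_\delta$ evaluate the free generator $\overline{\mu_2}$ consistently, so that the same $\lambda^2$ is a root of both. Even if the signs differed, a common root would require $n(\beta-1)^2=-m(\beta-1)^2$, which forces $\beta=1$ again unless $n=-m$. That edge case would need a separate check using the exact formula (e.g.\ noting $\Delta_{K_{-n}}$ corresponds to a different twist knot with the other sign), and this is what I would verify first.
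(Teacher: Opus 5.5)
Your proposal is correct and follows the paper's own proof. Proposition~\ref{reducible intersection and common root} with $p=r=1$ gives a common root of the ordinary Alexander polynomials, and the paper's formula $\Delta_{K_n}(t)\doteq -nt^2+(2n+1)t-n=-\bigl(n(t-1)^2-t\bigr)$ holds uniformly in $n$ (your other sign is just $n\mapsto -n$), so your subtraction argument finishes it and the $n=-m$ edge case never arises.

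As an aside, your own observation $\xi=\alpha(\mu_1)=1$, together with $\alpha(\lambda_1)=1$ from linking number $0$, already forces $\phi(\chi)=s_{++}\in\Sigma$. So no reducible character of $C_{1/n}$ lies outside $\phi^{-1}(\Sigma)$, and the corollary in fact holds vacuously.
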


\begin{proof}
The boundary components of the Whitehead link have linking number 0. Suppose $C_1,C_2$ intersect at a reducible character $\chi \notin \phi^{-1}(\Sigma)$, then from Proposition \ref{reducible intersection and common root}, $\chi$ will detect common roots of the ordinary Alexander polynomials for $K_n,K_m$. But the ordinary Alexander polynomial for $K_n$ is given by $\Delta_{K_n}(t)\doteq -nt^2+(2n+1)t-n$. It is easy to check that $\Delta_{K_n}(t), \Delta_{K_m}(t)$ have no common root, which leads to the contradiction.
\end{proof}

\bibliographystyle{spmpsci}
\bibliography{Ref.bib}

\end{document}